\documentclass[11pt]{amsart}
\usepackage{fullpage}
\allowdisplaybreaks 
\usepackage{todonotes}
\usepackage{hyperref}
\usepackage{amsmath,amsfonts,amssymb}
\usepackage{mathrsfs}
\usepackage{verbatim}
\usepackage{amsthm}
\usepackage{mathrsfs} 
\usepackage{import}

\newtheorem{theorem}{Theorem}[section]
 \newtheorem{corollary}[theorem]{Corollary}
 \newtheorem{lemma}[theorem]{Lemma}
 \newtheorem{proposition}[theorem]{Proposition}
 \theoremstyle{definition}

 \theoremstyle{remark}
 \newtheorem{remark}[theorem]{Remark}
  \newtheorem{ex}[theorem]{Example}
 \numberwithin{equation}{section}
 
\def \bC {\mathbb C}

\def \bH {\mathbb H}

\def \bN {\mathbb N}

\def \bR {\mathbb R}
\def \bS {\mathbb S}
\def \bT {\mathbb T}

\def \bZ {\mathbb Z}
\def \Z {\mathbb Z}

\def \R {\mathbb R}

\def \cD {\mathcal D}

\def \cF {\mathcal F}

\def \cH {\mathcal H}

\def \cL {\mathcal L}

\def \cS {\mathcal S}

\def \cU {\mathcal U}

\def \fg {\mathfrak g}
\def \fh {\mathfrak h}

\def \sL{\mathscr L}

\def \ad {{\rm ad}}
\def \id {\text{\rm I}}

\def \Gh {{\widehat G}}
\def \eps {\varepsilon}
\def \vol {{\rm vol}}

\def \spec  {{\rm sp}\, }

\def \BWZ {{\rm BWZ}}

\def \Pr {{\rm{Pr}}}

\newcommand{\ds}{\displaystyle}

\title
{Restriction problems\\ on the three-dimensional Heisenberg nilmanifold}    
\date{\today}
\author[H.  Bahouri  \&  V. Fischer]
{Hajer Bahouri and V\'eronique Fischer}

\address[H. Bahouri] 
{CNRS  \&  Sorbonne Universit\'e  \\
 Laboratoire Jacques-Louis Lions (LJLL) UMR  7598 \\
4, Place Jussieu\\
75005 Paris, France.}
\email{hajer.bahouri@sorbonne-universite.fr}
 
\address[V. Fischer]
{University of Bath, Department of Mathematical Sciences, Bath, BA2 7AY, UK} 
\email{v.c.m.fischer@bath.ac.uk}

\keywords{Restriction problems,  Spectral projector, Heisenberg nilmanifolds, Sub-elliptic operators, Hermite functions, Berezin-Weil-Zak transform, Short-time Fourier transform.}

\subjclass[2020]{
43A80, 
58J50, 
35H20,
42B10, 
43A30}

\begin{document}

\begin{abstract}
In this paper, we prove a spectral restriction theorem on the three-dimensional
Heisenberg nilmanifold. Since this manifold is an $\mathbb S^1$-bundle over the flat torus
$\mathbb T^2$, the result provides a sub-elliptic counterpart of Zygmund's
restriction theorem on  $\mathbb T^2$ \cite{zygmund}. 
We also establish its
sharpness by means of the discrete short-time Fourier transform.
\end{abstract}

\maketitle

\setcounter{tocdepth}{2}

\makeatletter
\renewcommand\l@subsection{\@tocline{2}{0pt}{3pc}{5pc}{}}
\makeatother
 
\tableofcontents

\section{Introduction}

\subsection{Restriction problems on the torus}
We start
with the classical inequality on the two-dimensional torus $\bT^2$:
 \begin{equation}   \label{zyg2ddual}
\Big\|\sum_{|\omega|=r}c_\omega \chi_\omega \Big\|_{L^{4}(\bT^2)} \leq 5^{\frac 14} \|(c_\omega)_{\omega\in \bZ^2}\|_{\ell^2(\bZ^2)},\end{equation} 
valid for any $r>0$ and $(c_\omega)_{\omega\in \bZ^2} \in \ell^2(\bZ^2)$, and 
where we have denoted for any $\omega \in \Z^2$,   
$$
|\omega|:=\sqrt{\omega_1^2+\omega_2^2}
\qquad\mbox{and}\qquad 
\chi_\omega (x,y):= e^{ 2\pi i (\omega_1 x+\omega_2 y)}, \ (x,y)\in \bT^2.
$$
This estimate, or rather its dual, was proved by Zygmund \cite{zygmund} in the 1970's.
Zygmund's result may be summarised as 
\begin{equation}
\label{eq_zygsumm}	
\|\Pi_{r,\sqrt{\Delta}}\|_{\sL(L^{2}(\bT^2), L^4(\bT^2))} 
=\|\Pi_{r,\sqrt{\Delta}}\|_{\sL(L^{4/3}(\bT^2), L^2(\bT^2))} 
\leq 5^{1/4}.
\end{equation}
Above, the operator $\Pi_{r,\sqrt{\Delta}}$ is given by 
\begin{equation}   \label{proj2d}
\Pi_{r,\sqrt{\Delta}} f = \sum_{|\omega|=r} (f,\chi_\omega)_{L^2(\bT^2)} \chi_\omega, 
\qquad  f\in L^2(\bT^2).
\end{equation}  
This is the spectral projector onto the $r$-eigenspace for $\sqrt{\Delta}$ where $\Delta=-\partial_1^2-\partial_2^2$ 
denotes the standard Laplacian  
on the two-dimensional torus~$\bT^2$.

\smallskip This result is specific to the two-dimensional case: as we will see in Section~\ref{sketch}, its  proof  relies  mainly on an underlying simple arithmetic structure.  This is no longer the case in higher dimensions:  for $\Pi_{r,\sqrt{\Delta}}$, with $\Delta=-\partial_1^2-\cdots-\partial_n^2$ on the~$n$-dimensional torus~$\bT^n$, it has been  established by Bourgain~\cite{bourgain} and Bourgain-Demeter~\cite{bourgain-demeter1} that, for all $n \geq 4$, $1 \leq p \leq \frac {2n}{n+3}$ and $\epsilon>0$, there holds
\begin{equation}   \label{zygnd}
\|\Pi_{r,\sqrt{\Delta}}\|_{\sL(L^{p}(\bT^n), L^2(\bT^n))} \lesssim_\epsilon r^{n  (\frac 1 p- \frac 1 2) - 1 + \epsilon}\, ,
\end{equation}  
while for $n=3$,  $1 \leq p \leq \frac {3}{2}$ and $\epsilon>0$, Bourgain proved in \cite{bourgain1} the following  estimate
\begin{equation}   \label{zygn3}
\|\Pi_{r,\sqrt{\Delta}}\|_{\sL(L^{p}(\bT^3), L^2(\bT^3))} \lesssim_\epsilon r^{\epsilon}\,.
\end{equation} 

 Such   problems are known as discrete restriction problems.
Given the wide range of their applications,  they have been extensively studied in recent decades and we  discuss a brief historical overview  in Section \ref{brief}  below. 
For the latest  developments on this topic, we refer the interested reader to    the    surveys  by Guth \cite{Guth} and  Demeter \cite{DemeterICM, DemeterBourgain}.


\subsection{Setting of the problem and statement of the main results}

The aim of this paper is to show a discrete restriction problem for the three-dimensional Heisenberg nilmanifold.  
Here, the 3-dimensional Heisenberg group $\bH_1$ is realised as the set of  $3\times 3$-matrices with real entries, upper triangular and unipotent (that is,  with entries equal to 1 on the diagonal):
\begin{equation}\label{rea-H1}
\left(
\begin{array}{ccc}
	1&     a&c\\0&1&b\\0&0&1
\end{array}	\right) , \qquad a,b,c\in \bR. 
\end{equation}
We consider its subgroup $\Gamma$ consisting of the elements above corresponding to integer entries~$a,b,c\in \bZ$. 
The standard three-dimensional Heisenberg nilmanifold is the resulting right quotient:
$$
M:=\Gamma \backslash \bH_1.
$$
It is a compact smooth manifold of dimension $3$ that may be described as an $\bS^1$-bundle over the flat torus~$\bT^2$.
 A fundamental fact  is that 
 $M$ is equipped   with a measure,  a sub-Riemannian structure, and a  canonical sub-Laplacian  $\cL_M$;
this  will be described   in Section \ref{def-prop}. The spectral decomposition of  $\cL_M$  is well-known from a spectral viewpoint  \cite{DeningerSinghof} and in  harmonic analysis~\cite{KTX,Th2009}; it will be recalled in Section \ref{spectral}.

\medskip 
In this paper, we will prove the following result:
\begin{theorem}\label{main}
Considering the canonical sub-Laplacian $\cL_M$ on $M$, 
we denote by $\Pi_{\mu,\sqrt{\cL_M}}$ the spectral projector of $\sqrt{\cL_M}$. We have for all $\mu>0$ and $\epsilon>0$
\begin{equation}   \label{Nilrest}
\|\Pi_{\mu,\sqrt{\cL_M}}\|_{\sL(L^2(M),L^4(M))}
=\|\Pi_{\mu,\sqrt{\cL_M}}\|_{\sL(L^{4/3}(M),L^2(M))}
 \leq C_\epsilon \mu^{\frac 1 2  +\epsilon} .
\end{equation}    \end{theorem}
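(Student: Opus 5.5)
The plan is to deduce \eqref{Nilrest} from an $L^2\to L^\infty$ bound, obtained by counting the dimension of the eigenspace, combined with the interpolation inequality $\|f\|_{L^4}\le \|f\|_{L^2}^{1/2}\|f\|_{L^\infty}^{1/2}$. The equality of the two operator norms in \eqref{Nilrest} is the usual $TT^*$ duality, since $\Pi_{\mu,\sqrt{\cL_M}}$ is a self-adjoint projector. So it suffices to bound $\|\Pi_{\mu,\sqrt{\cL_M}}\|_{\sL(L^2(M),L^4(M))}$.

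\textbf{Step 1: homogeneity.} The group $\bH_1$ acts on $M=\Gamma\backslash\bH_1$ by right translations, and these preserve the measure. The canonical sub-Laplacian $\cL_M$ is built from left-invariant vector fields, so it commutes with this action. Hence so does $\Pi_{\mu,\sqrt{\cL_M}}$, and the action is transitive. Pick an orthonormal basis $(\phi_j)$ of the eigenspace $E_\mu:=\mathrm{Ran}\,\Pi_{\mu,\sqrt{\cL_M}}$. The diagonal $x\mapsto \sum_j|\phi_j(x)|^2$ of its kernel is then constant on $M$, and integrating shows it equals $\dim E_\mu/\vol(M)$. By Cauchy--Schwarz we get $\|\Pi_{\mu,\sqrt{\cL_M}} f\|_{L^\infty}\le (\dim E_\mu/\vol (M))^{1/2}\|f\|_{L^2}$. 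Interpolating with the trivial $L^2$ bound gives
$$\|\Pi_{\mu,\sqrt{\cL_M}}\|_{\sL(L^2(M),L^4(M))}\lesssim (\dim E_\mu)^{1/4}.$$

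\textbf{Step 2: counting $\dim E_\mu$.} This is the main step, and it uses the spectral decomposition recalled in Section~\ref{spectral}. We decompose $L^2(M)$ according to the central frequency $n\in\bZ$.
\begin{itemize}
\item For $n=0$ we obtain $L^2(\bT^2)$, on which $\cL_M$ acts as the flat Laplacian. Its contribution to $E_\mu$ is spanned by the characters $\chi_\omega$ with $4\pi^2|\omega|^2=\mu^2$, in the normalisation of Section~\ref{def-prop}. The number of such $\omega$ is $O_\epsilon(\mu^{\epsilon})$, by the divisor bound for representations as a sum of two squares.
\item For $n\neq0$, the Berezin--Weil--Zak transform identifies the frequency-$n$ component with $|n|$ copies of $L^2(\bR)$. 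On each copy $\cL_M$ acts as a rescaled harmonic oscillator with eigenvalues $2\pi|n|(2k+1)$, $k\in\bN$, each of multiplicity one per copy. Thus the pair $(n,k)$ contributes $|n|$ dimensions whenever $2\pi|n|(2k+1)=\mu^2$.
\end{itemize}
Such pairs exist only if $N:=\mu^2/(2\pi)$ is an integer. In that case $|n|$ runs over divisors of $N$, so the number of pairs is at most $2d(N)=O_\epsilon(\mu^{\epsilon})$, and each contributes at most $|n|\le N\lesssim\mu^2$ dimensions. Altogether $\dim E_\mu\lesssim_\epsilon \mu^{2+\epsilon}$.

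\textbf{Step 3: conclusion.} Inserting this into Step 1 gives $\|\Pi_{\mu,\sqrt{\cL_M}}\|_{\sL(L^2,L^4)}\lesssim_\epsilon\mu^{\frac12+\epsilon/4}$, which is \eqref{Nilrest} after renaming $\epsilon$.

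The main point to check carefully is the exact normalisation of the spectrum: the constants in front of $|\omega|^2$ and of $|n|(2k+1)$, and that the multiplicity of each $(n,k)$ is exactly $|n|$. This only affects constants and not the divisor-counting argument. A more refined route would split $E_\mu$ into its $(n,k)$ pieces and bound each by $|n|^{1/4}$ through the same homogeneity argument, with the torus piece handled by \eqref{eq_zygsumm}. It yields the same exponent.
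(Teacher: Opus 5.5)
\textbf{The homogeneity claim in Step 1 is false, and the rest of the argument depends on it.} The group $\bH_1$ acts on $M$ by \emph{right} translations, while $A_M,B_M$ come from \emph{left}-invariant vector fields. Left-invariance gives commutation with left translations, and that is not the action on $M$. In fact $X_M=R(X)$. On $L^2_{\lambda,q}(M)$, $\lambda\neq 0$, the map $\BWZ_{\lambda,q}$ intertwines $R(g)$ with $\pi_\lambda(g)$ and $\cL_M$ with the harmonic oscillator $\pi_\lambda(\cL)$ (see \eqref{acR}--\eqref{action-Vf}). For $g=(a_0,0,0)$, $\pi_\lambda(g)$ is translation by $a_0$, which does not commute with $\pi_\lambda(\cL)$. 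Only central elements give right translations commuting with $\cL_M$, and these move points only along the circle fibres, so the action is not transitive.

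The conclusion also fails. For $\mu^2=2\pi$ one has $E_\mu=\mathrm{span}\{h_{1,0,0},h_{-1,0,0}\}$, with $|h_{\pm1,0,0}(\Gamma(a,b,c))|=\big|\sum_{k\in\bZ}e^{2\pi i kb}h_{0,1}(a\pm k)\big|$. These are Zak transforms of a centred Gaussian, and both vanish at $(a,b)=(\frac12,\frac12)$: pair $k$ with $-1-k$, resp.\ $1-k$. So the diagonal $\sum_j|\phi_j|^2$ vanishes on a whole fibre, although its mean is $\dim E_\mu=2$. The dimension count only controls the \emph{average} of the diagonal, so the bound $\|\Pi_{\mu,\sqrt{\cL_M}}\|_{\sL(L^2(M),L^\infty(M))}\lesssim(\dim E_\mu)^{1/2}$ is unproved. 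Your ``refined route'' relies on the same homogeneity and has the same flaw. This is the torus argument, and the non-commutativity of $\bH_1$ is exactly what breaks it.

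Your $L^\infty$-plus-H\"older strategy can be repaired, but the repair is the real content and it is missing. You need a pointwise bound $\sup_{\dot x}\sum_{q}|h_{\lambda,q,\ell}(\dot x)|^2\lesssim\mu^2$ for each admissible $\lambda$. One way: a discrete Fourier transform in $q$ turns this sum into $\sum_{r=0}^{|\lambda|-1}\big|\sum_{m\in\bZ}e^{2\pi i(r+|\lambda|m)b}h_{\ell,\lambda}\big(a+\tfrac{r+|\lambda|m}{\lambda}\big)\big|^2$. Then apply a weighted Cauchy--Schwarz in $m$ at the turning-point scale $\sqrt{(2\ell+1)/(2\pi|\lambda|)}$ of $h_{\ell,\lambda}$, the sampling inequality $\sum_k F(a+\tfrac k\lambda)\le|\lambda|\|F\|_{L^1}+\|F'\|_{L^1}$, and the Hermite moment identities. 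This gives a bound $\lesssim|\lambda|+\ell+1\lesssim\mu^2$. Summing over the $O_\epsilon(\mu^\epsilon)$ divisors yields $\|\Pi_{\mu,\sqrt{\cL_M}}\|_{\sL(L^2(M),L^\infty(M))}\lesssim_\epsilon\mu^{1+\epsilon}$, and your Step 3 then goes through.

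The paper avoids $L^\infty$ entirely. Theorem \ref{thm_keyprop} computes $\|\sum_q\gamma_q h_{\lambda,q,\ell}\|_{L^4(M)}^4$ exactly: it is a sum over $(a,b)\in\bZ^2$ of the correlations $\sum_q\gamma_q\overline{\gamma_{q-b}}e^{2\pi i q a/\lambda}$, squared in modulus and weighted by $|\cL_\ell(\pi\frac{a^2+b^2}{|\lambda|})|^2$. Laguerre decay then gives $\mu^{(1+\epsilon)/2}$ on each $L^2_\lambda(M)$, Zygmund's estimate handles $\lambda=0$, and the divisor bound sums over $\lambda$. Your Step 2 count and the duality remark are fine.
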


\smallskip  

To our knowledge, 
Theorem \ref{main} is the first restriction result in the challenging context of nilmanifolds, which is both spectrally discrete and sub-elliptic. 
The estimate \eqref{Nilrest} is sharp up to the factor $\mu^{\epsilon}$. More precisely, we will also show the following proposition: 
\begin{proposition} \label{prop:sharpness}
The exponent $1/2$ in Theorem \ref{main} is optimal in the sense that there exists $C>0$ such that we have for all $\mu>0$ in the spectrum of $\sqrt{\cL_M}$
\[
\|\Pi_{\mu,\sqrt{\cL_M}}\|_{\sL(L^2(M),L^4(M))}
=\|\Pi_{\mu,\sqrt{\cL_M}}\|_{\sL(L^{4/3}(M),L^2(M))}
 \geq C \mu^{\frac 1 2 } .
 \]  
\end{proposition}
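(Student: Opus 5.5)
To prove the lower bound, I will exhibit, for each $\mu$ in the spectrum, one eigenfunction $f$ of $\sqrt{\cL_M}$ with eigenvalue $\mu$ and $\|f\|_{L^4(M)}\geq C\mu^{1/2}\|f\|_{L^2(M)}$. This suffices, since $\Pi_{\mu,\sqrt{\cL_M}}f=f$. The equality of the two operator norms follows, as in \eqref{eq_zygsumm}, from duality: $\Pi_\mu$ is self-adjoint and idempotent, so $\|\Pi_\mu\|_{L^{4/3}\to L^2}=\|\Pi_\mu\|_{L^2\to L^4}$ by a $TT^*$-type argument.

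I will use the spectral decomposition of $\cL_M$ recalled in Section~\ref{spectral}. The space $L^2(M)$ splits into the part invariant under the centre, where $\cL_M$ acts as the Laplacian on $\bT^2$, and the parts with central frequency $n\in\bZ\setminus\{0\}$. On the $n$-th part, which is identified (via the Berezin--Weil--Zak transform) with $|n|$ copies of $L^2(\bR)$, $\cL_M$ acts as the scaled harmonic oscillator with eigenvalues $2\pi|n|(2k+1)$. The spectrum of $\sqrt{\cL_M}$ therefore consists of the values $\mu^2=4\pi^2|\omega|^2$ together with the values $\mu^2=2\pi|n|(2k+1)$, and I need a uniform lower bound for every such $\mu$. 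I will therefore split into cases according to which type of eigenvalue $\mu^2$ is, using whichever family gives the better bound.

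\emph{Key case: $\mu^2=2\pi|n|(2k+1)$ with $k=0$.} This is where I expect the loss $\mu^{1/2}$ to appear. Here $\mu^2\sim |n|$, and the eigenfunctions are the images under $\BWZ$ of the Gaussian $h_0$ at scale $|n|^{-1/2}$. I will take
$$
f(x,y,z)=e^{2i\pi nz}\sum_{m\in\bZ} h_{0}^{(n)}(x+m)\,e^{2i\pi n m y},
$$
normalised suitably. This is a periodised Gaussian of width $|n|^{-1/2}$ in $x$, modulated by a single frequency $n$ in $y$ (or a fixed shift thereof). Since the Gaussian is well localised, the periodisation has essentially no overlap, so $|f|$ is comparable to $|n|^{1/4}e^{-\pi |n| x^2}$ on a fundamental domain in $x$. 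Both $L^2$ and $L^4$ norms are then one-dimensional Gaussian computations: with $\|f\|_2\sim1$, one gets $\|f\|_4\sim |n|^{1/4}\cdot|n|^{-1/8}=|n|^{1/8}$. That is only $\mu^{1/4}$, which is too weak.

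\emph{Improving to $\mu^{1/2}$.} Concentration in $y$ as well is needed. Summing the $|n|$ copies coherently, i.e.\ taking the sum over $j=0,\dots,|n|-1$ of the basis functions $h_0^{(n)}(x+m+j/n)e^{2i\pi n(m+j/n)y}$, produces via Poisson summation a function concentrated near a point in both $x$ and $y$ at scale $|n|^{-1/2}$: it is essentially a discrete short-time Fourier transform of a Gaussian. On a $|n|^{-1/2}\times|n|^{-1/2}$ ball its modulus is comparable to $|n|^{1/2}$ (after $L^2$ normalisation), so $\|f\|_4^4\sim |n|^{2}\cdot|n|^{-1}=|n|$. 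This gives $\|f\|_4\sim|n|^{1/4}=\mu^{1/2}$, as required.

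The main obstacle is making this concentration rigorous uniformly for all $k$, since $\mu^2$ may also be realised only with large $k$. For general $k$ I would still use coherent states but in the higher Hermite eigenspace, or argue as follows. For a $\mu^2$ of the form $2\pi|n|(2k+1)$, the choice $n'=n(2k+1)$, $k'=0$ gives the same eigenvalue, and this $n'$ is always admissible. So the $k=0$ construction applies for every such $\mu$, with $|n'|\sim\mu^2$.

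For eigenvalues coming from the torus part, I will rely on the fact that $4\pi^2|\omega|^2$ is also of the form $2\pi|n'|$ when $2\pi|\omega|^2$ is an integer. I need to check this against the paper's normalisation; if it fails, I will use instead the zonal-type sum $\sum_{|\omega|=r}\chi_\omega$, which has $L^4$ norm of size at least $N^{1/4}$ with $N$ the multiplicity. That is weaker, so I expect the paper's normalisation to put all eigenvalues in the central family.

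The detailed Gaussian-sum estimates (lower bound of $|f|$ on the small ball, via the main Poisson term dominating the tails) are routine.
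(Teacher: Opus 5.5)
\textbf{There is a genuine gap in your key step.} Your ``coherent'' function is the equal-weight sum over $j=0,\dots,|n|-1$ of $e^{2i\pi nz}\sum_m h_0^{(n)}(x+m+j/n)e^{2i\pi n(m+j/n)y}$. This equals $e^{2i\pi nz}\sum_{k\in\bZ}h_0^{(n)}(x+k/n)e^{2i\pi ky}$, which is a multiple of the single basis function $h_{n,0,0}=\BWZ_{n,0}h_{0,n}$. (Compare the Remark after Theorem~\ref{thm_spdec}: you are summing the $g_{n,j,0}$ with equal weights.) Carrying out the Poisson summation you invoke gives, for $n>0$,
\[
|h_{n,0,0}(\Gamma(x,y,z))| = 2^{1/4}n^{1/4}\,\Big|\sum_{j\in\bZ}e^{2i\pi nxj}e^{-\pi n(y-j)^2}\Big|.
\]
On $|y|\le\frac12$ this is $2^{1/4}n^{1/4}e^{-\pi ny^2}$ up to exponentially small errors, whatever $x$ is. So the function concentrates in $y$ only, just as your first attempt concentrated in $x$ only, and again $\|f\|_{L^4}/\|f\|_{L^2}\asymp|n|^{1/8}\asymp\mu^{1/4}$. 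Theorem~\ref{thm_keyprop} with $\gamma=\delta_0$ confirms this: $\|h_{\lambda,0,0}\|_{L^4}^4=\sum_{a\in\bZ,\,b\in\lambda\bZ}e^{-\pi(a^2+b^2)/|\lambda|}\asymp|\lambda|^{1/2}$.

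A single copy and the flat sum of all $|n|$ copies are the two extremes of the uncertainty principle on $\bZ/n\bZ$. The missing idea is to weight about $|n|^{1/2}$ consecutive copies. This is exactly the paper's choice $\gamma=1_{[-A,A]}$ with $A=\lfloor\sqrt\lambda\rfloor$, in the basis $(h_{\lambda,q,0})_q$. With such a window your heuristic (modulus $\asymp|n|^{1/2}$ on an $|n|^{-1/2}$-ball after normalisation) becomes correct. The paper, however, avoids pointwise Gaussian-sum estimates altogether. It inserts the window into the exact identity of Theorem~\ref{thm_keyprop} and bounds $|V_gg|\ge 2A/\pi$ on a rectangle of about $\lambda$ lattice points where $e^{-\pi(a^2+b^2)/\lambda}$ is bounded below (Lemma~\ref{lem_Vgg}). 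This yields $\|f\|_{L^4}^4\gtrsim\lambda^2$ against $\|f\|_{L^2}^2\asymp\lambda^{1/2}$.

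\textbf{Torus eigenvalues.} Your reduction $n'=n(2k+1)$, $k'=0$ is correct; it is what lets the paper's $\ell=0$ computation reach every eigenvalue coming from $\lambda\neq0$, which the paper uses tacitly. The torus eigenvalues, however, cannot be handled, and your expectation about the normalisation is wrong. The equation $4\pi^2|\omega|^2=2\pi|n|(2k+1)$ would force $2\pi|\omega|^2\in\bN$, which is impossible for $\omega\neq0$ since $\pi$ is irrational. Hence for $\mu=2\pi|\omega|$ one has $\Pi_\mu=\Pi_\mu\Pr_0$, and \eqref{eq_term0} (Zygmund) gives $\|\Pi_\mu\|_{\sL(L^2(M),L^4(M))}\le 5^{1/4}$. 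No function can give $\mu^{1/2}$ there; zonal sums in particular have $L^4/L^2$ ratio at most $5^{1/4}$. So the proposition, read literally for every $\mu$ in the spectrum, fails at these $\mu$. It holds only for $\mu$ in the part of the spectrum coming from $\oplus_{\lambda\neq0}L^2_\lambda(M)$, i.e.\ $\mu^2\in2\pi\bN$, and that is all the paper's argument actually establishes. You should state the result that way rather than look for a torus construction.
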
  

Compared to Zygmund's result \eqref{eq_zygsumm}, 
the factor $\mu^{\frac 12}$ can be interpreted as a fractional loss of derivatives.
Here, the notion  of derivative is associated to the nilmanifold~$M$ and its canonical sub-Riemannian structure; it can also be defined in terms of~$\sqrt{\cL_M}$ and its functional analysis. 
 This concept goes back to the   seventies with the founding papers of H\"ormander~\cite{hormander4} and Rothschild-Stein  \cite{RLS}. 
 However, it is not clear whether the  factor~$\mu^{\epsilon}$ in \eqref{Nilrest} is unavoidable: it  occurs often in discrete frameworks (see e.g. \eqref{zygnd} and \eqref{zygn3}), and  arises with questions related to number theory. This is also the case in our setting.

\smallskip

Theorem \ref{main}
 improves significantly the trivial estimate  given by the so-called Bernstein inequality in this context:
 \begin{proposition} [Sub-elliptic Bernstein Inequality]
 \label{proptrivial} 
With the  notation of Theorem \ref{main}, there exists a  constant $C>0$ such that the following estimate holds for any $\mu>0$:
 \begin{equation}   \label{Niltrivialrest}
 \|\Pi_{\mu,\sqrt{\cL_M}}\|_{\sL(L^2(M),L^4(M))}
=\|\Pi_{\mu,\sqrt{\cL_M}}\|_{\sL(L^{4/3}(M),L^2(M))}
\leq C  \mu.
\end{equation}  

\end{proposition}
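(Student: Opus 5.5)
The equality of the two operator norms follows from the standard $TT^*$/duality argument. Since $\Pi:=\Pi_{\mu,\sqrt{\cL_M}}$ is an orthogonal projection, it is self-adjoint and idempotent. Hence $\Pi:L^{4/3}(M)\to L^2(M)$ is the adjoint of $\Pi:L^2(M)\to L^4(M)$, and both norms equal $\|\Pi\|_{\sL(L^{4/3},L^4)}^{1/2}$. So it suffices to prove
\[
\|\Pi f\|_{L^4(M)}\leq C\mu\|f\|_{L^2(M)} ,\qquad f\in L^2(M).
\]
By interpolation (H\"older), $\|g\|_{L^4}\leq \|g\|_{L^2}^{1/2}\|g\|_{L^\infty}^{1/2}$, and $\|\Pi f\|_{L^2}\le \|f\|_{L^2}$. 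It is therefore enough to establish the $L^2\to L^\infty$ bound $\|\Pi\|_{\sL(L^2,L^\infty)}\leq C\mu^2$. Indeed, we then get $\|\Pi f\|_{L^4}\le C^{1/2}\mu\|f\|_{L^2}$.

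For the $L^\infty$ bound, the plan is to use the fact that $M$ has homogeneous dimension $Q=4$. Write $\Pi f=\Pi\,\phi(\cL_M/\mu^2)f$, where $\phi\in C_c^\infty(\bR)$ is a fixed bump with $\phi(1)=1$; this identity holds since $\phi(\lambda/\mu^2)=1$ at $\lambda=\mu^2$. Then $\|\Pi f\|_{L^\infty}\le \|\phi(\cL_M/\mu^2)\|_{\sL(L^2,L^\infty)}\|f\|_{L^2}$. The operator $\phi(\cL_M/\mu^2)$ is obtained by periodising the convolution kernel of $\phi(\cL/\mu^2)$ on $\bH_1$ over $\Gamma$. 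By homogeneity of the sub-Laplacian $\cL$ on $\bH_1$ under the dilations $\delta_r$, this kernel is $\mu^{4}K(\delta_{\mu}x)$ with $K=\phi(\cL)\delta_0\in\cS(\bH_1)$ (Hulanicki's theorem). Its $L^2(\bH_1)$-norm is $\mu^{2}\|K\|_{L^2}$.

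Periodising over $\Gamma$ for $\mu\ge 1$ gives a kernel $K_\mu^M(\dot x,\dot y)$ whose $L^2$-norm in $\dot y$, uniformly in $\dot x$, is $O(\mu^2)$. This is the step needing most care: Schwartz decay of $K$ shows that the sum over $\gamma\in\Gamma$ is dominated by boundedly many terms on a fundamental domain once $\mu\ge1$. Hence $\sup_{\dot x}\|K^M_\mu(\dot x,\cdot)\|_{L^2(M)}\lesssim\mu^2$, which gives the $L^2\to L^\infty$ bound. Alternatively, one can avoid the kernels by a spectral count. We have $\|\Pi\|_{L^2\to L^\infty}^2=\sup_x\sum_j|e_j(x)|^2$ over an orthonormal basis $(e_j)$ of the eigenspace. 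By Section \ref{spectral}, the eigenfunctions are characters of $\bT^2$ (for $\mu$ a torus frequency) or built from Hermite functions through the Berezin--Weil--Zak transform in the sectors $n\neq0$. Bounding $\sum_j|e_j|^2$ by $\mu^4$ amounts to the Weyl-type count $\#\{\text{eigenvalues}\le\mu^2\}\lesssim\mu^4$ combined with the homogeneity of $M$. Indeed, $\sum_j|e_j(x)|^2$ is constant in $x$ because the group acts transitively on $M$, so it equals $\dim(\text{eigenspace})/\vol(M)$. Consequently the main point reduces to the multiplicity bound $\dim\ker(\cL_M-\mu^2)\lesssim\mu^4$.

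That bound follows from the explicit spectrum. The eigenvalues are $4\pi^2|\omega|^2$ for $\omega\in\bZ^2$, with multiplicity $O(\mu^{\epsilon})$, and $2\pi|n|(2k+1)$ for $n\in\bZ\setminus\{0\}$, $k\in\bN$, each with multiplicity $|n|$. The pairs $(n,k)$ giving a fixed value $\mu^2$ satisfy $|n|\le\mu^2$ and number at most $O(\mu^2)$, so the total multiplicity is $O(\mu^4)$ (in fact, much better, but we only need this). For small $\mu$ the estimate is trivial, since there are only finitely many eigenvalues in any bounded range. The main obstacle is the transitivity claim: $\Gamma\backslash\bH_1$ is not homogeneous under a group of isometries commuting with $\cL_M$ that acts transitively. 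The right $\bH_1$-action commutes with $\cL_M$ only up to conjugation. I would therefore rely primarily on the kernel/periodisation argument above, using the spectral count only as a sanity check.
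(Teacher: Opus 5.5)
Your argument is correct and is essentially the paper's proof (Corollary~\ref{cor_prop_nilmanifold} and Theorem~\ref{thm_Bernstein} with $Q=4$, $p=4$). The paper writes $\Pi_{\mu,\sqrt{\cL_M}}=\psi(\mu^{-2}\cL_M)\Pi_{\mu,\sqrt{\cL_M}}$ and bounds $\sup_{\dot x}\|K_{\psi_\eps}(\dot x,\cdot)\|_{L^2(M)}^2=\sup_{\dot x}K_{|\psi_\eps|^2}(\dot x,\dot x)\lesssim \eps^{-Q}$ using the periodised-kernel diagonal asymptotics quoted from \cite{Fischer2023}; your unfolded Schwartz-decay estimate of the dilated Hulanicki kernel establishes exactly this, and your H\"older step plays the role of the paper's interpolation (you are right to discard the spectral-count aside, since the right $\bH_1$-action does not commute with $\cL_M$ and $\sum_j|e_j(\dot x)|^2$ over an eigenbasis is genuinely non-constant in $\dot x$).
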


The proof of the sub-elliptic Bernstein inequality  will be given in greater generality in Theorem \ref{thm_Bernstein}, see also Remark \ref{rem_pfproptrivial} for the case of the Heisenberg nilmanifold. 
 It should be emphasised that the sub-elliptic Bernstein inequality is governed not by the topological dimension of $M$, but by the homogeneous dimension of $\bH_1$, which is equal to four.

 \subsection{A brief historical overview on restriction problems}  \label{brief}   
 \subsubsection{The Euclidean framework}  
Fourier restriction's problem which was introduced by E. Stein dates back to the 60s:   given a hypersurface~$\hat S \subset \hat\R^n$ endowed with a smooth measure~$d\sigma$, the restriction problem  asks for which~$p$ an inequality of the form
\begin{equation}\label{eq:estime0}
 \|\cF( f)|_{\hat S}\|_{L^2 (\hat S, d \sigma)}\leq C \|f\|_{L^p (\R^n)}\, ,
\end{equation}
holds for all $f$ in  the Schwartz space $\cS(\R^n)$, where  $\cF (f)$ denotes the Fourier transform of  $f$ on~$\R^n$. When the Gaussian curvature of $\hat S$ does not vanish at every point, this problem is fully understood, see \cite{Fefferman1, stein, strichartz, Tomas} and  the optimal range of indices $p$ satisfying Inequality~\eqref{eq:estime0} is $[1, p_{TS}]$  with $p_{TS}$ the Tomas-Stein index given by $p_{TS}=(2n+2)/(n+3)$. It turns out that this problem is closely related to many questions in  harmonic analysis,  partial differential equations,  Fourier analysis, spectral theory, number theory, and many others. As a matter of fact, this type of problem, which  has been  generalised to other hyper-surfaces in Euclidean spaces and other settings,    remains a topical issue to this day. We refer the interested reader  to the  surveys on the subject   with various  points of view   \cite{HF, DemeterBourgain, Guth, Tao} and  the references therein.

\smallskip 
In this work, we focus on the case of the nilmanifold $M$ for which a Fourier transform is unavailable. We will then adopt the reformulation of  the problem of Fourier restriction  into a spectral problem as featured for instance in \cite{HF, Sogge2017}. As we shall see in Section \ref{spectral}, the spectral decomposition of the  sub-Laplacian $\cL_M$   includes the spectral decomposition of the Laplacian on    the two-dimensional flat torus given by \eqref{proj2d}. So, we dedicate the following  paragraph to the strategy of proof of Zygmund's theorem \cite{zygmund} regarding the   Fourier restriction's problem on the two dimensional torus. This emphasises  the link between the restriction problem in the discrete case and number theory.

  \subsubsection{Sketch of the proof of Zygmund's theorem}  \label{sketch} 
  
  Here we present the ideas of Zygmund's proof for  \eqref{zyg2ddual},
 and choose to give the details of the arithmetic  considerations sketched in~\cite{zygmund}.
We start with 
\begin{align*}
	\Big\|\sum_{2\pi |\omega|=r}c_\omega \chi_\omega \Big\|_{L^{4}(\bT^2)}^4 
	&= \Big\|\sum_{ |\omega_1| = |\omega_2|=\frac{r}{2\pi} } c_{\omega_1}\overline{c_{\omega_2}}\chi_{\omega_1}\overline{\chi_{\omega_2}} \Big\|_{L^2(\bT^2)}^2
	= \Big\|\sum_{ |\omega_1|= |\omega_2|=\frac{r}{2\pi}}  c_{\omega_1}\overline{c_{\omega_2}}\chi_{\omega_1-\omega_2}\Big\|_{L^2(\bT^2)}^2
	\\ 
	&=\|(\Gamma_{\rho,r})_{\rho\in \bZ^2} \|^2_{\ell^{2}(\bZ^2)}  \, , 
	 \qquad\mbox{where}\qquad  
   \Gamma_{\rho,r}:= \sum_{\substack{\omega_1-\omega_2=\rho\\ |\omega_1|=\frac{r}{2\pi} = |\omega_2|}} c_{\omega_1}\overline{c_{\omega_2}}\, ,
\end{align*}
by the Plancherel formula.
We have for $\rho=0$
by definition, 
$$
\Gamma_{0,r}=\sum_{|\omega|=\frac r {2\pi}} |c_\omega|^2 =|\Gamma_{0,r}| \leq  \sum_{\mu\in \bZ^2}
|c_\mu|^2 =\|(c_\omega)\|^2_{\ell^{2}(\bZ^2)},
$$ 
while 
for any $\rho \neq 0  $  
by the Cauchy-Schwartz inequality, 
$$
|\Gamma_{\rho,r}|^2\leq 
\sum_{\substack{\omega_1-\omega_2=\rho\\ |\omega_1|=\frac{r}{2\pi} = |\omega_2|}} |c_{\omega_1}\overline{c_{\omega_2}}|^2
\sum_{\substack{\omega_1-\omega_2=\rho\\ |\omega_1|=\frac{r}{2\pi} = |\omega_2|}} 1.
$$
We observe that  when $\rho\in \bR^2\setminus\{0\}$ and $r'\geq \frac 1 2$ are given, 
we can compute explicitly the solutions  $\omega_1,\omega_2\in \bR^2$ for
\begin{align}
\label{eq:Z} &|\omega_1|= |\omega_2| = r',
\ \omega_1-\omega_2=\rho 
\\& \nonumber \qquad \Longleftrightarrow 
\omega_1 = \frac \rho2  +\eps_0\sqrt{{r'}^2-\frac 14}\rho^\perp ,\ \omega_2 = -\frac \rho2  +\eps_0\sqrt{{r'}^2-\frac 14}\rho^\perp, \ \eps_0=\pm 1,
\end{align}
where $\rho^\perp := (-\rho_2,\rho_1)$ when we write $\rho = (\rho_1,\rho_2)\in \bR^2$, while  when $\rho\in \bR^2\setminus\{0\}$ and~$0 \leq r'< \frac 1 2$, the set of solutions of \eqref{eq:Z}  is empty. 
Consequently, summing over $\rho\in \bZ\setminus\{0\}$, 
$$
\sum_{\rho\neq 0} 
|\Gamma_{\rho,r}|^2
\leq 2 
\sum_{\omega,\rho\in \bZ^2}
|c_\omega|^2 
\left(
|c_{\omega-\rho}|^2+|c_{\omega+\rho}|^2\right)
= 4 \Big (\sum_{\mu\in \bZ^2}
|c_\mu|^2 \Big )^2.
$$
Gathering the equalities and inequalities above, we   obtain 
$$
\Big\|\sum_{2\pi |\omega|=r}c_\omega \chi_\omega \Big\|_{L^{4}(\bT^2)}^4 =
\|(\Gamma_{\rho,r})_{\rho\in \bZ^2} \|^2_{\ell^{2}(\bZ^2)} \leq (1+4  )\|(c_\omega)\|^4_{\ell^{2}}
= 5\|(c_\omega)\|^4_{\ell^{2}}, 
$$ showing \eqref{zyg2ddual}.

   \subsubsection{Restriction problems in the sub-elliptic framework} 
   Restriction problems have been studied very sparsely in the sub-elliptic setting and are limited to the framework  of certain stratified Lie groups, such as the Heisenberg group,  see for instance \cite{BBG, BF, CC, LW11, Muller} and the references therein.  Note in particular the surprising result of M\"uller  \cite{Muller} stressing
that for  the Heisenberg group there are no non-trivial solutions for Stein's problem in the $L^p$-spaces, for $p > 1$, 
whereas a positive result can be obtained in anisotropic Lebesgue spaces! 

The reason why only a few cases of non commutative Lie groups have been studied is not solely due to the complexity of the group Fourier transform, but   also because several phenomena that occur in the Euclidean case are attenuated or fail  in the case of these groups.  This is for instance the case of  the property of unique continuation, dispersion phenomena, Strichartz estimates and observability; see among others \cite{bgx, BBG, Bahouri86, BF, Benedetto, BL, BSun, FKL, Letrouit, Muller} and the references therein.     
 
  \smallskip 
   
   As mentioned above, the manifold \(M\) is not itself a group and therefore
does not carry an intrinsic Fourier transform. We instead approach restriction
from a spectral viewpoint, in the spirit of \cite{BF,Sogge2017}, but in a
discrete sub-elliptic setting. The harmonic analysis of the Heisenberg group $\bH_1$
yields the relevant decomposition of \(L^2(M)\); within this decomposition, we
choose an orthonormal basis of eigenfunctions for \(\cL_M\). As in Zygmund's
proof on \(\mathbb T^2\), this basis has arithmetic properties that are
essential to the argument.

\subsection{Layout and notation}
The paper is organised as follows. In Section~\ref{def-prop}, we set the notation and recall the basic properties of the Heisenberg group and the associated nilmanifold $M=\Gamma\backslash\bH_1$. This naturally leads to an important decomposition of $L^2(M)$. The paper is written to be almost entirely self-contained: all relevant background is recalled in detail, with some proofs deferred to the appendix. 

Section~\ref{Proof-main} is devoted to the proof of the main result. We first describe the spectral decomposition of $\cL_M$, then establish special properties of the orthonormal basis used in the argument, which extend classical properties of the Zak transform (see Section~\ref{subsec_pfThmcorcorlem_idZ}). The proof of the main theorem then follows. 
We conclude by proving sharpness; the argument here is guided by the natural appearance of the discrete short-time Fourier transform in the relevant expressions, which provided key intuition.

The appendix collects background material on Hermite and Laguerre functions, as well as on nilmanifolds with a particular emphasis on the Heisenberg nilmanifold. In particular, it contains the proof of the sub-elliptic Bernstein inequality on the Heisenberg nilmanifold $M$ stated in Proposition~\ref{proptrivial}.

\smallskip  We conclude this introduction with a comment on notation. In this paper, the letter $C$ will be used to denote  universal constants
which may vary from line to line. If we need the implied constant to depend on parameters, we shall indicate this by subscripts. We  also use the notation $A\lesssim B$  (respectively $A\gtrsim B$) to
denote   bounds of the form $A\leq C B$ (respectively $A \geq C B$), 
  and $A \lesssim_\epsilon B$ 
for $A\leq C_\epsilon B$, where~$C_\epsilon$ depends only  on $\epsilon$.

\medskip

\bigbreak\noindent{\bf Acknowledgments.}
  The authors wish to thank    Philippe Jaming       for fruitful exchanges about the Zak transform.


\section{Notation and basic properties}
\label{def-prop}
\subsection{The Heisenberg group and its standard nilmanifold}
The Heisenberg group   arises in a number of
fundamental domains of mathematics and physics. Owing to
this wide variety of topics, there are different ways of realising it. Whatever the chosen realisation is, the Heisenberg group is a non commutative Lie group, that is a group
endowed with a smooth manifold structure, in which the group operations of multiplication
and inversion are smooth.

\smallskip Throughout this paper, we shall   consider the 3-dimensional Heisenberg group $\bH_1$ realised as the set of
 upper triangular and unipotent real $3 \times 3$ matrices, that is, the matrices of the form \eqref{rea-H1}.
Then $\bH_1$ is the connected simply connected nilpotent Lie group identified with 
$\{(a,b,c)\in 
\bR^3\}$
 equipped with the non commutative   group law: 
\begin{equation} \label{law}
(a,b,c)(a',b',c') =(a+a',b+b',c+c'+ab').
\end{equation}
Its neutral element is the origin 
$(0, 0,0)$, the inverse of a generic element $(a,b,c)$ of $\bH_1$ is just~$(-a, -b, -c+ab)$.  

\medskip We consider its  discrete  subgroup
$$
\Gamma := \{(a,b,c)\in \bH_1 \ : \ a,b,c\in \bZ\}.
$$
The  standard three-dimensional Heisenberg nilmanifold  is then the  right quotient
$$
M:=\Gamma \backslash \bH_1.
$$
Its elements are the $\Gamma$-right equivalence classes over $\bH_1$:
$$
M\ni \dot x = \Gamma x, \quad x\in \bH_1.
$$

We fix the Haar measure
 $dx=dadbdc$ on $\bH_1$  and the counting measure $\sum_{x \in \Gamma} \delta_x$ on $\Gamma$.
 This induces a unique measure $d\dot x$ on $M$, 
 see Appendix \ref{subsec_nilmanifolds}.
Moreover, a fundamental domain in~$\bH_1$ is given by 
$$
\{(a,b,c)\in \bH_1 : a,b,c \in [0,1)\}\sim [0,1)^3;
$$
 on this fundamental domain, the measure induced on $M$ identifies with $dadbdc$. In particular,~$M$ is of volume $\vol (M)= 1$.

\smallskip

Having fixed the Haar measure on $\bH_1$ as above allows us to identify the functional spaces on~$\bH_1$ and on $\bR^3$, see Appendix \ref{subsecLiegr}; for instance, we will write $\cS(\bH_1)$ for the space of Schwartz functions,~$C_c(\bH_1)$ for the space of continuous functions with compact support,~$C_c^\infty(\bH_1)$ for the space of smooth functions with compact support,~$\cS'(\bH_1)$, $\cD'(\bH_1)$,~$L^p(\bH_1)$, $L^p_{loc}(\bH_1)$ etc. 

\smallskip
Similarly fixing a measure on $M$ enables  us to consider the space $L^p(M)$, $p\in [1,\infty]$. 
Moreover, since $M$ is a smooth manifold, the identification above 
implies that the space~$C^\infty(M)$ of smooth functions on $M$ is dense in $L^p(M)$ for any $p\in [1,\infty]$.

\subsection{The Lie algebra of  $\bH_1$} 
  The Lie algebra  $\fh_1$   of  the Heisenberg group $\bH_1$ is
the space of its left-invariant vector fields.   
Given our realisation of the product law in \eqref{law}, 
the canonical basis of $\fh_1$ is   given by:
$$
A := \partial_a, 
\qquad 
B:= \partial_b + a\partial_c, 
\qquad 
S:=\partial_c.
$$
We  readily check that they are invariant under left-translation and  satisfy the canonical commutation relations, known as CCR in quantum mechanics: 
$$
[A,S]=0=[B,S] \quad\mbox{and}\quad
[A,B]=	S .
$$

\smallskip

Note that  $\bH_1$ can be globally reconstructed from its Lie algebra $\fh_1$ by means of the exponential map $\exp:\fh_1\to \bH_1$, which  is a smooth bijection with a smooth inverse, and   the   Baker-Cambell-Hausdorff  formula.  Under the realisation \eqref{rea-H1},    $\fh_1$ may be realised   as the three-dimensional vector space of matrices of the form 
$$
\left(
\begin{array}{ccc}
	0&     a&c\\0&0&b\\0&0&0
\end{array}	\right) , \qquad a,b,c\in \bR,
$$
and the exponential map is therefore nothing else than the matrix exponential $\exp:\fh_1\to \bH_1$. 

\smallskip

As the vector fields $A,B,S$ are left-invariant,  they descend to vector fields $A_M$, $B_M$, $S_M$  etc. on $M$;  see Section \ref{subsubsec_opGM}.
 The vector fields $A_M, B_M, S_M$ form a frame on~$M$
and satisfy  also the canonical commutation relations.  

It is  natural to define the sub-Laplacians on $\bH_1$ and on  $M$
$$
\cL:=-(A^2+B^2) \quad\text{and}\quad
\cL_M := - (A_M^2+ B_M^2).
$$
These linear differential operators are positive, hypoelliptic and essentially self-adjoint on~$C_c^\infty (\bH_1) \subset L^2(\bH_1)$ and on~$C^\infty (M) \subset L^2(M)$ respectively.
We will keep the same notation for their self-adjoint extensions respectively on~$L^2(\bH_1)$ and on~$L^2(M)$.

\subsection{The Schr\"odinger representations $\pi_\lambda$, $\lambda\neq 0$}
 \label{subsec_schro}
 
In this paper, we realise  the Schr\"odinger representations $\pi_\lambda$ of $\bH_1$ for $\lambda\in \bR\setminus \{0\}$ acting on $L^2(\bR)$ as    
\begin{equation}   \label{repHe}
\pi_\lambda(a,b,c) h (u) = e^{2\pi i \lambda (c + ub)} h(u + a), 
\quad (a,b,c)\in \bH_1, \ h\in L^2(\bR), \ u\in \bR.
\end{equation}   

\smallskip 
Recall  that $\pi_\lambda$ is a group homomorphism between $\bH_1$ and the   unitary group~$\cU(L^2(\R))$ of~$L^2(\R)$, and plays the same role as the characters for the Euclidean space,  as   regards  the  Fourier transform on $\bH_1$. For   an overview on this subject, we refer for instance the reader to~\cite{bcdh, CorwinGreenleaf, fermfischer1, Fischerbook, kirillov} and the references therein.  

\medskip 

All along this paper, we keep the same notation for the infinitesimal Schr\"odinger representation on the Lie algebra $\fh_1$ of $\bH_1$. 
We will identify $\fh_1$ with the space of left-invariant vector fields via
$$
X\phi (g)  = \partial_t \phi (g \exp tX), \qquad \phi\in C^\infty(\bH_1), \ g\in \bH_1;
$$
above we have kept the same notation for the matrix $X\in \fh_1$ and the corresponding vector field.
The infinitesimal Schr\"odinger representation on the Lie algebra $\fh_1$ of $\bH_1$ is then defined as 
$$
\pi_\lambda (X) =\partial_{t=0}\pi_\lambda (\exp t X).
$$
We then easily compute
\begin{equation}   \label{rep-al}
\pi_\lambda(A)h(u)= h'(u), 
\quad \pi_\lambda(B)h(u)= 2\pi i \lambda  u h(u), 
\quad \pi_\lambda(S)=2\pi i \lambda \id.
\end{equation}   

\smallskip 
All along this paper, we also keep the same notation for the Schr\"odinger representation on the universal enveloping algebra of  $\fh_1$.
In particular, the Schr\"odinger representation for the sub-Laplacian is the rescaled harmonic oscillator:
\begin{equation} \label{eq:actsub}
\pi_\lambda (\cL) =  -\frac {d^2}{du^2} + (2\pi \lambda u)^2.
\end{equation}
The definition of 
the Hermite functions
$h_\ell$ is  given in \eqref{hermite}. We  rescale them via
\begin{equation} \label{eq:eigsub}
h_{\ell,\lambda}(u):= (2\pi |\lambda|)^{\frac14} \, 
h_\ell\left (\sqrt{2\pi |\lambda|}  u\right ) , \quad u\in \bR,
\end{equation}
so that they become the eigenfunctions of $\pi_{\lambda}(\cL)$:
 \begin{equation}   \label{actionhermite}
\pi_\lambda (\cL)h_{\ell,\lambda} = 2\pi|\lambda|(2\ell +1) h_{\ell,\lambda}, \quad \ell\in \bN_0.
\end{equation}  
Furthermore, the spectrum of $\pi_{\lambda}(\cL)$ is $\{2\pi|\lambda|(2\ell +1), \ell\in \bN_0\}$. 

It is well-known that the matrix coefficients of the Schr\"odinger representation
with respect to the Hermite functions are expressed in terms of Laguerre
functions,  see \eqref{rel}.
With the convention of our paper, this is expressed as 
\begin{equation}
	\label{eq_hllamLag}
(h_{\ell,\lambda} , \pi_\lambda(a,b,c) h_{\ell,\lambda} )
_{L^2(\bR)}
= 
e^{-2\pi i \lambda c}
e^{\pi i \lambda ba}
\cL_\ell \left(\pi|\lambda| (a^2+b^2)\right).	
\end{equation}

   \subsection{Decomposition of $L^2(M)$ under the regular representation}
The Heisenberg group~$\bH_1$ acts unitarily on $L^2(M)$ via the right regular representation:
\begin{equation}   \label{repreg}
R:\bH_1\to \mathscr{U}(L^2(M)), \qquad R(g)f(\dot x) = f(\dot x g), \ g\in \bH_1, \ f\in L^2(M), \ \dot x\in M.
\end{equation} 
Moreover, this action leads to a decomposition of $L^2(M)$ that we briefly present here.

\subsubsection{Statement}
Here, we recall features of this decomposition, especially the properties that will be useful in the rest of the paper. 

\begin{theorem}
\label{thm_decR}
The decomposition in Fourier series in the $c$-variable  yields 
an orthogonal decomposition of 
the space $L^2(M)$ into subspaces 
\begin{equation}   \label{rdecL2}
L^2(M) =\oplus^\perp_{\lambda\in \bZ} L^2_\lambda(M),
\end{equation}  
that respects the action of the regular representation $R$   
 defined by \eqref{repreg};
 the subspaces $L^2_\lambda(M)$ are explicitly described in Section \ref{subsubsec_L2lambda}.

\begin{enumerate}
	\item The space $L^2_0(M)$ admits a further decomposition  
 $$
 L^2_0(M) = \oplus_{w\in \bZ^2}^\perp \bC \chi_\omega, 
 $$
 that is irreducible and respects the action of $R$. 
 Above, $\chi_\omega\in C^\infty(M)$  is defined via 
 \begin{equation}
 	\label{def_chiomega}
 	\chi_\omega (\dot x) := \exp (2\pi i \omega\cdot (a,b)), 
 	\qquad x=(a,b,c)\in \bH_1, \ \omega\in \bZ^{2}.
 \end{equation}
\item Let $\lambda\in \bZ\setminus\{0\}$. 
In order to obtain the further decomposition of $L^2_\lambda(M)$, 
we define the generalised Berezin-Weil-Zak transform as
\begin{equation}
	\label{eq:BWZ}
		\BWZ_{\lambda,q}(h)(\dot x) :=\frac{\sqrt{|\lambda|}}{\lambda}  \sum_{k\in \bZ} 
e^{-2\pi i q \frac {k} \lambda  }
e^{2\pi i  (\lambda c + k b)} h\left (\frac {k}\lambda + a\right ), 
\end{equation}
for $h\in \cS(\bR)$, $q\in \bZ$ and  $x=(a,b,c)\in \bH_1$. 
The BWZ transform enjoys the following properties:
\begin{enumerate}
\item For any $q\in \bZ$ and $h\in \cS(\bR)$, $\BWZ_{\lambda,q}(h)$ is a well defined smooth function on $M$ which belongs to  $L^2_\lambda(M)$. It satisfies 
for any~$k_1\in \bZ$, 
$$
\BWZ_{\lambda,\lambda k_1 +q}(h)=\BWZ_{\lambda,q}(h), 
$$
allowing us to see the parameter $q$ in $\bZ/\lambda\bZ$.
\item 	For each $q\in \bZ/\lambda\bZ$, 
	the transformation
	$\BWZ_{\lambda,q}:\cS(\bR)\to C^\infty(M)$ extends uniquely  to a unitary linear transformation 
	$$
	\BWZ_{\lambda,q}:L^2(\bR)\longrightarrow L^2_\lambda(M).
	$$
	\item For any $q_0,q\in \bZ$ and $\Gamma (a,b,c)=\dot x\in M$, we have
\begin{equation}\begin{aligned}\label{action-osc}
\BWZ_{\lambda,q_0}(h) \left (\dot x \left (\frac {q} \lambda,0,0\right)\right)
&= e^{-2\pi i   q b} e^{2\pi i q_0 \frac  q  \lambda} \BWZ_{\lambda,q_0}(h) (\dot x),
\\
\BWZ_{\lambda,q_0}(h) \left (\dot x \left (0, \frac {q} \lambda,0\right)\right)
&= e^{2\pi i   q a} \BWZ_{\lambda,q_0 -q}(h) (\dot x).\end{aligned}
\end{equation} 

\end{enumerate}
\item 
The subspaces 
$$
L^2_{\lambda,q}(M) := \BWZ_{\lambda,q}(L^2(\bR)), \qquad q\in \bZ/\lambda\bZ,
$$
of $L^2_\lambda(M)$ are orthogonal and their sums fills:
$$
L^2_\lambda(M) = \oplus^\perp_{q\in \bZ/\lambda\bZ}L^2_{\lambda,q}(M).
$$
Moreover, the regular representation $R$ defined by   \eqref{repreg} acts on $L^2_{\lambda,q}(M)$, 
where it is unitarily equivalent to the (irreducible) Schr\"odinger representation $\pi_\lambda$
with~$\BWZ_{\lambda,q}$ as  intertwiner.
\end{enumerate}
\end{theorem}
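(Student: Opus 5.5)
The plan is to decompose in the central variable, then handle each Fourier mode by explicit computation with the Berezin-Weil-Zak transform.

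\emph{Step 1: Fourier series in $c$.} A function $f$ on $M$ is a function on $\bH_1$ invariant under left translation by $\Gamma$. Since $(0,0,n)$ is central and lies in $\Gamma$, $f$ is $1$-periodic in $c$. So I expand $f(a,b,c)=\sum_{\lambda\in\bZ} f_\lambda(a,b)e^{2\pi i\lambda c}$. I define $L^2_\lambda(M)$ as the set of $f$ satisfying $f(\dot x(0,0,c'))=e^{2\pi i\lambda c'}f(\dot x)$. These spaces are pairwise orthogonal by Parseval in $c$ on the fundamental domain $[0,1)^3$, and together they fill $L^2(M)$. They are $R$-invariant because $(0,0,c')$ is central.

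\emph{Step 2: the cases $\lambda=0$ and $\lambda\neq 0$.} For $\lambda=0$, the function does not depend on $c$, and left invariance under $(1,0,0)$ and $(0,1,0)$ makes it $\bZ^2$-periodic in $(a,b)$. Ordinary Fourier series on $\bT^2$ then give the basis $\chi_\omega$. Each $\bC\chi_\omega$ is $R$-invariant since $\chi_\omega(\dot x g)=\chi_\omega(g)\chi_\omega(\dot x)$, and it is one-dimensional, hence irreducible.

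For $\lambda\neq0$, I check that $\BWZ_{\lambda,q}(h)$ is left $\Gamma$-invariant. Invariance under $(0,0,n)$ is clear. For $\gamma=(n,0,0)$ we have $\gamma x=(a+n,b,c+nb)$; the factor $e^{2\pi i\lambda nb}$ is absorbed by the shift $k\mapsto k-\lambda n$, and the phase $e^{-2\pi i q(k-\lambda n)/\lambda}$ is unchanged because $qn\in\bZ$. For $\gamma=(0,m,0)$ we have $\gamma x=(a,b+m,c)$, and $e^{2\pi i km}=1$. Smoothness and convergence follow from $h\in\cS(\bR)$. The periodicity in $q$ follows from $e^{-2\pi i\lambda k_1 k/\lambda}=1$. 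The relations \eqref{action-osc} come from direct substitution using the group law. For example, $x(0,q/\lambda,0)=(a,b+q/\lambda,c+aq/\lambda)$, which produces $e^{2\pi i qa}e^{2\pi i kq/\lambda}$ and so shifts $q_0$ to $q_0-q$. The first relation follows similarly after reindexing $k\mapsto k+q$.

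The intertwining identity $\BWZ_{\lambda,q}(\pi_\lambda(g)h)=R(g)\BWZ_{\lambda,q}(h)$ is also a direct substitution in \eqref{eq:BWZ}. It has to be checked for the three one-parameter subgroups. Irreducibility of $L^2_{\lambda,q}(M)$ under $R$ then follows from irreducibility of $\pi_\lambda$ (Stone--von Neumann).

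\emph{Step 3: isometry, orthogonality and completeness (the main obstacle).} For isometry and orthogonality, I compute $\int_{[0,1)^3}\BWZ_{\lambda,q}(h)\overline{\BWZ_{\lambda,q'}(h')}$. The $b$-integral forces $k=k'$. Writing $k=\lambda j+r$ with $r\in\{0,\dots,|\lambda|-1\}$, the sum over $j$ combined with $a\in[0,1)$ reconstitutes $\int_\bR h(a+r/\lambda)\overline{h'(a+r/\lambda)}\,da$ for each $r$. This leaves
\[
\frac{1}{|\lambda|}\sum_{r}e^{-2\pi i(q-q')r/\lambda}(h,h')_{L^2(\bR)},
\]
which equals $\delta_{q\equiv q'}(h,h')$.

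For completeness, I take $f\in L^2_\lambda(M)$ smooth. Periodicity in $b$ (from $\gamma=(0,1,0)$) gives $f=e^{2\pi i\lambda c}\sum_k g_k(a)e^{2\pi i kb}$. Invariance under $(1,0,0)$ gives $g_{k}(a+1)=g_{k+\lambda}(a)$. Hence $f$ is determined by the $|\lambda|$ functions $G_r(t):=g_{r+\lambda j}(t-j)$, where $t=a+j$ ranges over $\bR$; the $G_r$ are well defined and lie in $L^2(\bR)$. Finally I set $h_q:=\frac{\sqrt{|\lambda|}}{|\lambda|}\sum_r e^{2\pi i qr/\lambda}G_r(\cdot+r/\lambda)$, inverting the finite Fourier transform on $\bZ/\lambda\bZ$, so that $f=\sum_q\BWZ_{\lambda,q}(h_q)$. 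The bookkeeping of this last inversion, including the normalisation $\sqrt{|\lambda|}/\lambda$, is where I expect the main care to be needed. Density of smooth functions then yields the orthogonal decomposition of $L^2_\lambda(M)$, and extension by continuity gives unitarity of each $\BWZ_{\lambda,q}$.
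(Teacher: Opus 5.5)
Your argument is correct, up to a bookkeeping slip in the inversion formula noted below. It reaches the theorem by a more hands-on route than the paper.

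The paper does not check $\Gamma$-invariance, \eqref{action-osc} or the intertwining relation on the series \eqref{eq:BWZ}. Instead it first classifies the $\pi_\lambda(\Gamma)$-invariant tempered distributions: by Proposition~\ref{properties} they are spanned by the $\nu_{\lambda,q}$ and form a space of dimension $|\lambda|$. It then defines $\BWZ_{\lambda,q}(h)(\dot x)=\sqrt{|\lambda|}\,(\nu_{\lambda,q},\pi_\lambda(x)h)$. Smoothness, periodicity and $R(g)\BWZ_{\lambda,q}=\BWZ_{\lambda,q}\pi_\lambda(g)$ then follow from the general matrix-coefficient construction (Lemma~\ref{lem_phipinuh}, Lemma~\ref{lem_appcor_lem_phipinuh}).

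For orthogonality in $q$, the paper uses the substitution $\dot x\mapsto\dot x(q/\lambda,0,0)$ and \eqref{action-osc} to get $I=e^{2\pi i(q_1-q_2)q/\lambda}I$. It computes explicitly only when $q_1=q_2$. Your single computation with the character sum over $\bZ/\lambda\bZ$ handles both cases at once.

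The substantive difference is completeness. The paper deduces $L^2_\lambda(M)=\oplus^\perp_{q}L^2_{\lambda,q}(M)$ from Richardson's multiplicity result $m(\pi_\lambda)=|\lambda|$ (Example~\ref{ex_Richardson_H1}). You prove it by an explicit Zak-type inversion: Fourier series in $b$, the quasi-periodicity $g_k(a+1)=g_{k+\lambda}(a)$, and a finite Fourier inversion in $q$. Your route is elementary and self-contained, and it proves the multiplicity $|\lambda|$ rather than importing it. The paper's route explains where the formula comes from and is set up to generalise to other nilmanifolds through Corollary~\ref{cor1_lem_phipinuh}.

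\emph{The slip.} Matching the coefficient of $e^{2\pi i(r+\lambda j)b}$ requires $\frac{\sqrt{|\lambda|}}{\lambda}\sum_{q}e^{-2\pi i qr/\lambda}h_q(t+r/\lambda)=G_r(t)$. The correct choice is therefore
\[
h_q=\frac{\sqrt{|\lambda|}}{\lambda}\sum_{r=0}^{|\lambda|-1}e^{2\pi i qr/\lambda}\,G_r\Big(\cdot-\frac r\lambda\Big).
\]
Your formula, with $G_r(\cdot+r/\lambda)$ and prefactor $\sqrt{|\lambda|}/|\lambda|$, would return $\frac{|\lambda|}{\lambda}G_r(t+2r/\lambda)$ instead of $G_r(t)$.

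For smooth $f$ the $G_r$ are Schwartz, since $\partial_a^m g_k(a)$ decays rapidly in $k$ uniformly for $a\in[0,1]$. So \eqref{eq:BWZ} applies to $h_q$ directly, and you only need the density argument afterwards.
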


Theorem \ref{thm_decR}  is well-known with perhaps different conventions, but we include the proofs for the sake of completeness:
the decomposition in \eqref{rdecL2} and
 Part (1) are shown below (in Sections \ref{subsubsec_L2lambda} and \ref{subsubsec_lambda=0} respectively), while the proofs of  Parts (2) and (3)  are postponed to Appendix \ref{sec_BWZ}.
 References for this material include \cite{KTX, Th2009}.

\subsubsection{The BWZ transform}

The generalised Berezin-Weil-Zak transform \eqref{eq:BWZ}  plays a key role for  our purpose.  Such transform   is well-known with different scales and up to unitary operators.   According to the field of use, it  is  also referred to  as the  Bloch-Floquet, Gabor, Gel'fand or Zak transform. It  was first introduced by Gel'fand \cite{Gelfand} in~1950   for the sake of a problem in ordinary differential
equations, then extended   by Weil  \cite{Weil} in~1964 to the setting of locally compact abelian groups with respect to arbitrary
closed sub-groups, and  rediscovered in~1967 by Zak  \cite{Zak}   who used it to construct a quantum mechanical representation
for the description of the dynamic  of an electron in the presence of a constant 
magnetic or electric field.  
These transforms  have  been a matter of great interest in the last decades  and   developed by many authors on locally compact  groups (abelian and non commutative) and groupoids, see for instance~\cite{DJ, Folland, Gro, jamain, Janssen,  Kut,  Zak} and the references therein.  
 
We observe that the orthogonality of the~$
	\BWZ_{\lambda,q}(L^2(\bR))=L^2_{\lambda,q}(M)$, $q\in \bZ/\lambda\bZ$, 
	together with the unitarity of $\BWZ_{\lambda,q}$ are equivalent to 
 \begin{equation}
\label{orthBVZ}
(\BWZ_{\lambda_1,q_1}(h_1)	, \BWZ_{\lambda_2,q_2}(h_2))_{L^2(M)}
=\delta_{\lambda_1=\lambda_2,q_1=q_2} (h_1,h_2)_{L^2(\bR)},
\end{equation}
holding for all $\lambda_1,\lambda_2\in \bZ\setminus\{0\}$, $h_1,h_2\in \cS(\bR)$ and $q_1,q_2\in \bZ$;
this explains our normalisation for $\BWZ_{\lambda,q}$. 
The $\BWZ_{\lambda,q}$ transform  intertwining the regular and Schr\"odinger representations means that 
	\begin{equation}
\label{acR}
	\forall g\in \bH_1, \qquad 
	\BWZ_{\lambda,q}\pi_\lambda(g) = R(g) \BWZ_{\lambda,q},
	\end{equation}
so
	\begin{equation}\label{action-Vf}
	  \forall X\in \fh_1,\qquad 
	\BWZ_{\lambda,q}\pi_\lambda(X) = X_M \BWZ_{\lambda,q}.
	\end{equation}

\subsubsection{A first decomposition}
\label{subsubsec_L2lambda}
We start by resorting to Fourier series in the central variable: 
$$
f = \sum_{\lambda\in \bZ} f_\lambda,
$$
where $f_\lambda$ is loosely defined as
\begin{equation}
	\label{eq_flambda(a,b,c)}
f_\lambda(a,b,c) :=  e^{2\pi i \lambda c}
 \int_0^1 f(a,b,c')e^{-2\pi i \lambda c'} dc',  \quad  (a,b,c) \in  [0,1)^3\sim M.
\end{equation}
 Let us pause  to give a meaning to this formula.
Since $f\in L^2(M)$,  we know $\phi:=f_{\bH_1}\in L^2_{loc}(\bH_1)$ so the function defined via
$$
\phi_\lambda (a,b,c) :=  e^{2\pi i \lambda c}
 \int_0^1 f(a,b,c')e^{-2\pi i \lambda c'} dc', \qquad (a,b,c)\in \bH_1,
 $$
 is also in $L^2_{loc}(\bH_1)$; we check that it is $\Gamma$-invariant, and we set $f_\lambda = (\phi_\lambda)_M$. 
 
 \smallskip
In fact, we check  that each $f_\lambda$ above is in the subspace
\begin{equation}   \label{defsub}
  L^2_\lambda (M) := \{f\in L^2(M) \ \colon \ f(\Gamma( a,b,c)) = f(\Gamma(a,b,0))e^{2\pi i \lambda c}
\}
\end{equation}   
 of $L^2(M)$, and is isomorphic via $f\mapsto f_{\bH_1}=\phi$ onto the space $L^2_\lambda (\bH_1)$ of function $\phi \in L^2_{loc}(\bH_1)$ that are $\Gamma$-invariant and satisfy 
$$
 \phi(a,b,c) = \phi(a,b,0)e^{2\pi i \lambda c} \ \mbox{for all} \ (a,b,c)\in \bH_1.
 $$
We have obtained the first decomposition announced in \eqref{rdecL2}, that is, 
$$
L^2(M) =\oplus^\perp_{\lambda\in \bZ} L^2_\lambda(M).
$$
The orthogonal projectors of this decomposition 
are denoted by 
 \begin{equation}
	\label{eqdef_Prlambda} 
	\Pr_\lambda:L^2(M)\to L^2_\lambda(M), \quad \lambda\in \bZ.
\end{equation}
They enjoy the following properties:
\begin{lemma}
  \label{lem_Prlambda}
  Let $\lambda\in \bZ$.
  The orthogonal projector $P_\lambda$ is formally given via
  $$
  \Pr_\lambda f(\dot x) =e^{2\pi i \lambda c}
 \int_0^1 f(a,b,c')e^{-2\pi i \lambda c'} dc',
 	\qquad x=(a,b,c)\in \bH_1, \quad f\in C^\infty(M).
 	$$
Moreover, for any $f\in C^\infty(M)$, 
we have
$$
\|\Pr_\lambda f\|_{L^p(M)}\leq \|f\|_{L^p(M)}, \quad p\in [1,\infty].
$$
  \end{lemma}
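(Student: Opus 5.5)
The plan is to work on the smooth dense subspace $C^\infty(M)$ and write the projector as a fibre integral along the $c$-direction. On the fundamental domain $[0,1)^3$, the measure on $M$ is $da\,db\,dc$, so $\Pr_\lambda$ acts by a one-dimensional Fourier coefficient in $c$. Both claims are then consequences of Fubini and Minkowski/Jensen.

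\textbf{Step 1 (the formula).} For $f\in C^\infty(M)$, set $\phi=f_{\bH_1}$, which is smooth and $\Gamma$-invariant. Define $\phi_\lambda(a,b,c)=e^{2\pi i\lambda c}\int_0^1\phi(a,b,c')e^{-2\pi i\lambda c'}dc'$.

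First I would check that $\phi_\lambda$ is $\Gamma$-invariant. By \eqref{law}, left multiplication by $(k_1,k_2,k_3)\in\Gamma$ sends $(a,b,c)$ to $(a+k_1,b+k_2,c+k_3+k_1b)$. The $c'$-integral is over a full period of a $1$-periodic function, since $\phi(a,b,c'+1)=\phi((0,0,1)(a,b,c'))=\phi(a,b,c')$. Hence the shift of $c'$ by $k_3+k_1b$ leaves the integral unchanged after translation. The prefactor $e^{2\pi i\lambda c}$ picks up $e^{2\pi i\lambda(k_3+k_1 b)}$, and this factor is compensated exactly by the translation of the integration variable.

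Next, $\phi_\lambda$ lies in $L^2_\lambda$ by construction. Moreover, $f-\sum$ behaves as a Fourier series in $c$ for each fixed $(a,b)$, and Parseval in $c$ followed by integration in $(a,b)$ shows that $f\mapsto f_\lambda$ is the orthogonal projection onto $L^2_\lambda(M)$. This identifies $\Pr_\lambda$ with the stated formula.

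\textbf{Step 2 (the $L^p$ bound).} Fix $(a,b)\in[0,1)^2$. Then $|\Pr_\lambda f(a,b,c)|\le\int_0^1|f(a,b,c')|\,dc'$, and this bound is independent of $c$. For $p=\infty$ the estimate is immediate. For $1\le p<\infty$, Jensen (or Hölder) on the probability space $([0,1),dc')$ gives
\[
|\Pr_\lambda f(a,b,c)|^p\le\int_0^1|f(a,b,c')|^p\,dc' .
\]
Integrating over $(a,b,c)\in[0,1)^3$ and using Fubini then yields $\|\Pr_\lambda f\|_{L^p(M)}^p\le\|f\|_{L^p(M)}^p$.

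The only delicate point is Step 1: justifying that integrating over $c'\in[0,1)$ at fixed $(a,b)$ in the fundamental domain makes sense on $M$. This relies on the $\Gamma$-invariance computation and on the identification of the measure on $M$ with $da\,db\,dc$. The rest is routine.
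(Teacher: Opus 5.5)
Your proposal is correct and follows the paper's proof: the formula is read off from the Fourier expansion in the central variable that defines $L^2_\lambda(M)$, and the $L^p$ bound comes from Jensen's inequality on the probability space $([0,1),dc')$ followed by Fubini on the fundamental domain $[0,1)^3$, with $p=\infty$ handled directly. Your Step 1 only makes explicit the $\Gamma$-invariance of $\phi_\lambda$ and the Parseval argument, which the paper takes as read; in the invariance check, the integral itself picks up the factor $e^{-2\pi i\lambda(k_3+k_1 b)}$ that cancels the prefactor, because $c'\mapsto\phi(a,b,c')e^{-2\pi i\lambda c'}$ is $1$-periodic when $\lambda\in\bZ$.
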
 
  \begin{proof}
  	The formula for the projectors follows readily from \eqref{eq_flambda(a,b,c)}.
It implies   by Jensen's inequality that we have for any $f\in C^\infty(M)$
$$
|\Pr_\lambda f(\dot x)|^p
=\Big|\int_0^1 f(a,b,c')e^{-2\pi i \lambda c'} dc'\Big|^p
\leq \int_0^1 |f(a,b,c')e^{-2\pi i \lambda c'}|^p dc'
= \int_0^1 |f(a,b,c')|^p dc',
$$
hence, 
$$
\|\Pr_\lambda f\|_{L^p(M)}^p \leq \int_{[0,1)^3}\int_0^1
|f(a,b,c')|^p dc' dadbdc=
\int_{[0,1)^3}
|f(a,b,c')|^p dadbdc'=
\|f\|_{L^p(M)}	^p.
$$
The case $p=+\infty$ also holds true by the same formula.
  \end{proof}

We observe that Decomposition \eqref{rdecL2}  respects the action of the regular representation $R$ defined by \eqref{repreg}:
\begin{lemma}
\label{lem_RactsL2lambda}
Let $\lambda\in \bZ$.
For any $g\in \bH_1$, 
	$R(g)$ maps $L^2_\lambda(M)$ into itself.
	Moreover, we have for any $f\in L^2_\lambda(M)$
 $$
 \forall c\in \bR,\quad 
 R(0,0,c) f =e^{2\pi i \lambda c} f.
 $$  
\end{lemma}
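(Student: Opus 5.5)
We argue directly from the definitions: the regular representation is right translation, and $L^2_\lambda(M)$ is characterised by the explicit condition \eqref{defsub}. So we work with the $\Gamma$-invariant lift $\phi=f_{\bH_1}\in L^2_\lambda(\bH_1)$ of $f\in L^2_\lambda(M)$ and compute how right translation by $g=(a',b',c')$ acts on it. By definition, $R(g)f$ corresponds to the function $\psi(x):=\phi(xg)$ on $\bH_1$. This function is again $\Gamma$-invariant on the left, since $\phi(\gamma x g)=\phi(xg)$ for $\gamma\in\Gamma$. Also $R(g)$ is unitary on $L^2(M)$, so $\psi$ is in $L^2_{loc}(\bH_1)$ and corresponds to an element of $L^2(M)$.

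The key step is to check the central character condition for $\psi$. By the group law \eqref{law},
$$
(a,b,c)(a',b',c')=(a+a',b+b',c+c'+ab').
$$
Hence
$$
\psi(a,b,c)=\phi(a+a',b+b',c+c'+ab') = \phi(a+a',b+b',0)\,e^{2\pi i\lambda(c+c'+ab')},
$$
where the second equality uses the defining property of $L^2_\lambda(\bH_1)$. In the same way,
$$
\psi(a,b,0)=\phi(a+a',b+b',0)\,e^{2\pi i\lambda(c'+ab')}.
$$
Comparing the two expressions gives $\psi(a,b,c)=\psi(a,b,0)e^{2\pi i\lambda c}$, so $R(g)f\in L^2_\lambda(M)$. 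Conceptually, this works because $(0,0,c)$ is central: $\psi(x(0,0,c))=\phi(x(0,0,c)g)=\phi(xg(0,0,c))$.

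For the second claim, take $g=(0,0,c')$. The computation above, with $a'=b'=0$, gives
$$
R(0,0,c')f(\Gamma(a,b,c)) = \phi(a,b,c+c') = \phi(a,b,0)e^{2\pi i\lambda(c+c')} = e^{2\pi i\lambda c'} f(\Gamma(a,b,c)).
$$
Renaming $c'$ as $c$ gives the stated identity.

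The only mildly delicate point is that elements of $L^2$ are equivalence classes, so the pointwise identities above hold almost everywhere. This is harmless. Right translation on $\bH_1$ preserves the Haar measure $dadbdc$, so it maps null sets to null sets. The defining condition can therefore be read as an a.e. identity for every fixed $c$, and it transports under right translation without loss. I do not expect any real obstacle; the main thing to verify carefully is the group-law computation, and in particular that the extra term $ab'$ produced by the group law appears identically in $\psi(a,b,c)$ and $\psi(a,b,0)$ and therefore cancels in their ratio.
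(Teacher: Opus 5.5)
Your proof is correct and follows the same route as the paper, which likewise uses the group law \eqref{law} to check that right translation of a function in $L^2_\lambda(\bH_1)$ stays in $L^2_\lambda(\bH_1)$, and reads off the action of $R(0,0,c)$ from the same formula. Your version simply spells out the cancellation of the $ab'$ term and the almost-everywhere bookkeeping that the paper leaves implicit.
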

\begin{proof}
We  compute for any $\phi:\bH_1\to \bC$ 
 $$
 \forall (a',b',c'),(a,b,c) \in \bH_1\qquad 
 \phi((a',b',c')(a,b,c)) =\phi(a'+a,b'+b,c'+c+a'b),
 $$
 and we check that    if $\phi\in L^2_\lambda(\bH_1)$, then so does 
 $(a',b',c')\mapsto \phi((a',b',c')(a,b,c))$.
 This shows that $R(g)$ maps $L^2_\lambda(M)$ into itself as well as the action of $R(0,0,c)$.
\end{proof}

To understand the action of $\bH_1$ via $R$ onto each $L^2_\lambda(M)$, we need to distinguish the cases~$\lambda=0$ and $\lambda\neq 0$.

\subsubsection{Case $\lambda=0$}
\label{subsubsec_lambda=0}
 The construction above  implies the following property for $L^2_0(M)$:
  \begin{lemma}
  \label{lem_L20identL2T2}
  The space  $L^2_0(M)$ is isomorphic via $f\mapsto f|_{c=0}$ to the space~$L^2(\bT^2)$.
  \end{lemma}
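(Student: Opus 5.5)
We need to show that the map $f\mapsto f|_{c=0}$ is a unitary isomorphism from $L^2_0(M)$ onto $L^2(\bT^2)$. The plan is to work with $\Gamma$-invariant lifts, as in Section~\ref{subsubsec_L2lambda}, and to identify the $\lambda=0$ condition with independence of $c$.

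First, take $f\in L^2_0(M)$ and write $\phi=f_{\bH_1}\in L^2_0(\bH_1)$. By the defining property \eqref{defsub} with $\lambda=0$, we have $\phi(a,b,c)=\phi(a,b,0)$, so $\phi$ does not depend on $c$. Set $\psi(a,b):=\phi(a,b,0)$. Left invariance under $\Gamma$ means $\phi((n_1,n_2,n_3)(a,b,c))=\phi(a+n_1,b+n_2,c+n_3+n_1 b)=\phi(a,b,c)$. Since $\phi$ is independent of $c$, this reduces to $\psi(a+n_1,b+n_2)=\psi(a,b)$ for all $(n_1,n_2)\in\bZ^2$, so $\psi$ is $\bZ^2$-periodic and defines a function on $\bT^2$. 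Strictly, these identities hold almost everywhere, since $\phi\in L^2_{loc}$. One way to make this rigorous is to note that $\phi(a,b,\cdot)$ is a.e.\ constant on almost every fibre, so $\psi$ is well defined almost everywhere. Alternatively, one can argue first for $f\in C^\infty(M)\cap L^2_0(M)$, or for $f=\Pr_0 g$ with $g\in C^\infty(M)$, and then pass to the limit by density.

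Next comes the isometry. On the fundamental domain $[0,1)^3$ the measure on $M$ is $da\,db\,dc$, so
\[
\|f\|_{L^2(M)}^2=\int_{[0,1)^3}|\psi(a,b)|^2\,da\,db\,dc=\|\psi\|_{L^2(\bT^2)}^2 .
\]
For surjectivity, start from $\psi\in L^2(\bT^2)$ and set $\phi(a,b,c):=\psi(a,b)$. The computation above shows that $\phi$ is $\Gamma$-invariant and lies in $L^2_{loc}(\bH_1)$. It is also trivially of the form required in $L^2_0(\bH_1)$, so $\phi$ descends to an element of $L^2_0(M)$ whose restriction to $c=0$ is $\psi$. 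Linearity is clear, which completes the proof. As a by-product, the characters $\chi_\omega$ of \eqref{def_chiomega} are exactly the images of the Fourier basis of $\bT^2$. This gives the decomposition in Theorem~\ref{thm_decR}(1) once one checks that $R(a',b',c')\chi_\omega=e^{2\pi i\omega\cdot(a',b')}\chi_\omega$, using $\phi((a,b,c)(a',b',c'))=\phi(a+a',b+b',\cdot)$.

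The only delicate point is measure-theoretic: the restriction to the null set $\{c=0\}$ has to be interpreted as the almost-everywhere $c$-independent representative $\psi$. I would handle this by working with $\psi(a,b)=\int_0^1\phi(a,b,c')\,dc'$, which agrees with the formula for $\Pr_0$ in Lemma~\ref{lem_Prlambda}.
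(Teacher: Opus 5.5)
Your proposal is correct and follows the paper's argument: you identify $L^2_0(\bH_1)$ with the $c$-independent, $\bZ^2$-periodic functions in $L^2_{loc}(\bH_1)$, and then check that restriction to $c=0$ is an isometric bijection onto $L^2(\bT^2)$. The differences are cosmetic. The paper defines the map on $C^\infty(M)\cap L^2_0(M)$ and extends it by continuity, whereas you mainly handle the null-set issue directly through the average $\psi(a,b)=\int_0^1\phi(a,b,c')\,dc'$ (while also noting the density route), and you spell out the isometry and surjectivity that the paper leaves implicit.
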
  
  \begin{proof} 
  By construction, 
  $L^2_0 (\bH_1)$ coincides with the space of functions $\phi\in L^2_{loc}(\bH_1)$ which do not depend on the central variable, and as functions of $(a,b)\in \bR^2$ which are $\bZ^2$-periodic. 
Hence $f\mapsto f|_{c=0}$ is a linear map $C^\infty(M)\cap L^2_0(M) \to C^\infty(\bT^2)$ that extends continuously uniquely into an isomorphism between $L^2_0(M)$ and $L^2(\bT^2)$.
  \end{proof}
  
 We obtain a further decomposition of $L^2_0(M)$ by considering the Fourier series on $\bT^2$. 
 For this, 
 we define the function $\chi_\omega\in C^\infty(M)$ via \eqref{def_chiomega}, or in other words via the $\Gamma$-invariant function~$(a,b,c)\mapsto e^{2i\pi (\omega_1 a + \omega_2 b)}$ on $\bH_1$
 for each $\omega=(\omega_1,\omega_2)\in \bZ^2$.

\smallskip
 Any $f\in L^2_0(M)$ can then be written as 
 $$
 f = \sum_{\omega\in \bZ^2} (f,\chi_\omega)_{L^2(M)}\chi_\omega.
 $$
We check readily that 
$$
R(a,b,c)\chi_\omega =e^{ 2\pi i  (\omega_1 a + \omega_2 b)}
\chi_\omega, \quad (a,b,c)\in \bH_1.
$$ 
Therefore, $R$ acts on $\bC \chi_\omega$ and we have an orthogonal decomposition 
$$
L^2_0 (M) =\oplus^\perp_{\omega\in \bZ^2}  \bC \chi_\omega.
$$
This shows Part (1) of Theorem \ref{thm_decR}.

 \subsubsection{Case $\lambda\neq 0$}
 \label{subsec_constfL2lambda}
\label{sec_cor_lem_phipinuh}

The decomposition for $\lambda\neq 0$ is also classical in harmonic analysis,
but its explicit construction is more technical.
For completeness, we recall it  in Appendix~\ref{sec_BWZ}, thereby
completing the proof of Theorem~\ref{thm_decR}.

\section{Proof of the main results}\label{Proof-main}

\subsection{The spectral decomposition of $\cL_M$}
\label{spectral}

We consider the functions~$\chi_\omega$, $\omega\in \bZ^2$, defined in \eqref{def_chiomega}, as well as  
$$
h_{\lambda,q,\ell} := \BWZ_{\lambda,q} h_{\ell,\lambda}, 
\quad \lambda\in \bZ\setminus\{0\}, \ q\in \bZ/\lambda\bZ, \ \ell \in \bN_0,
$$
with $h_{\ell,\lambda}$ the normalised Hermite functions defined by \eqref{eq:eigsub}, and where 
the $\BWZ_{\lambda,q}$ transform was defined in   Theorem \ref{thm_decR}. Then
\begin{equation}
	\label{eq_hlambdaqell}
	h_{\lambda, q,\ell}\left(\Gamma (a,b,c)\right)=\frac{\sqrt{|\lambda|}}{\lambda}  \sum_{k\in \bZ} 
e^{-2\pi i q \frac {k} \lambda  }
e^{2\pi i  (\lambda c + k b)} h_{\ell,\lambda} \left(\frac {k}\lambda + a\right).
\end{equation}

\begin{theorem}
\label{thm_spdec}
The functions~$\chi_\omega$ and~$h_{\lambda,q,\ell}$ on $M$ are smooth and  $L^2$-normalised:
\begin{equation}
	\label{eq_norm}
\|\chi_\omega\|_{L^2(M)}=1
\quad\mbox{and}\quad
\|h_{\lambda,q,\ell}\|_{L^2(M)}=1.
\end{equation}
They are  eigenfunctions of the sub-Laplacian $\cL_M$ on $M$  
\begin{align*}
	\cL_M h_{\lambda,\ell, q} &= 2\pi|\lambda|(2\ell +1)h_{\lambda,\ell, q},\quad \lambda\in \bZ\setminus\{0\}, \ q\in \bZ/\lambda\bZ, \ \ell\in \bN_0,\\
	\cL_M \chi_\omega &= (2\pi)^2 |\omega|^2 \chi_\omega, \qquad  \qquad \omega\in \bZ^2, 
\end{align*}  and form an orthonormal basis of $L^2(M)=\oplus^\perp_{\lambda\in \bZ} L^2_\lambda(M)$. 
More precisely, $(\chi_\omega)_{\omega\in \bZ^2}$ is an orthonormal basis of $L^2_0(M)$
while,  for each $\lambda\in \bZ\setminus\{0\}$, $(h_{\lambda,q,\ell})_{q\in \bZ/\lambda\bZ,\ell \in \bN_0}$
is an orthonormal basis of $L^2_\lambda(M)$.
\end{theorem}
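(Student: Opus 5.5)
The plan is to derive Theorem~\ref{thm_spdec} almost entirely from Theorem~\ref{thm_decR} together with the intertwining relation \eqref{action-Vf} and the spectral facts \eqref{actionhermite} for the rescaled harmonic oscillator. I would treat the two families separately, following the orthogonal decomposition \eqref{rdecL2}, and then assemble them.

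For $\lambda=0$, the functions $\chi_\omega$ are smooth on $M$ because the function $(a,b,c)\mapsto e^{2\pi i(\omega_1a+\omega_2b)}$ is smooth and $\Gamma$-invariant: for $(k,m,n)\in\Gamma$, left multiplication sends $(a,b,c)$ to $(a+k,b+m,\dots)$, and $\omega\in\bZ^2$. Normalisation follows from $|\chi_\omega|=1$ and $\vol(M)=1$. To get the eigenvalue I would compute $A\chi=2\pi i\omega_1\chi$ and $B\chi=(\partial_b+a\partial_c)\chi=2\pi i\omega_2\chi$, since $\chi$ does not depend on $c$. Hence $-(A^2+B^2)\chi_\omega=(2\pi)^2|\omega|^2\chi_\omega$, and this descends to $\cL_M$. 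The orthonormal basis property of $(\chi_\omega)$ in $L^2_0(M)$ is exactly Lemma~\ref{lem_L20identL2T2} combined with Fourier series on $\bT^2$, as in Part~(1) of Theorem~\ref{thm_decR}.

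For $\lambda\neq0$, smoothness of $h_{\lambda,q,\ell}$ is Part~(2)(a) of Theorem~\ref{thm_decR}, since $h_{\ell,\lambda}\in\cS(\bR)$. The norm identity follows from the unitarity in Part~(2)(b), because $\|h_{\ell,\lambda}\|_{L^2(\bR)}=\|h_\ell\|_{L^2(\bR)}=1$ (the rescaling \eqref{eq:eigsub} is $L^2$-isometric). For the eigenvalue equation, I apply \eqref{action-Vf} twice with $X=A,B$. This gives $\cL_M\BWZ_{\lambda,q}=\BWZ_{\lambda,q}\pi_\lambda(\cL)$ on $\cS(\bR)$, and \eqref{actionhermite} then yields the eigenvalue $2\pi|\lambda|(2\ell+1)$. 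One subtle point: \eqref{action-Vf} is stated for Lie algebra elements, so I need that the differential of \eqref{acR} applied to Schwartz vectors is legitimate. I would justify it by differentiating \eqref{acR} in $t$ at $g=\exp tX$, using the smoothness of $\BWZ_{\lambda,q}h$ and the fact that $h_{\ell,\lambda}$ is a smooth vector of $\pi_\lambda$.

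Completeness is the step where some care is needed, though it is still formal. Since the Hermite functions $(h_\ell)_{\ell}$ form an orthonormal basis of $L^2(\bR)$, so do the $(h_{\ell,\lambda})_\ell$. The unitary $\BWZ_{\lambda,q}$ therefore maps them to an orthonormal basis of $L^2_{\lambda,q}(M)$. Part~(3) gives $L^2_\lambda(M)=\oplus^\perp_{q}L^2_{\lambda,q}(M)$, with orthogonality across different $(\lambda,q)$ supplied by \eqref{orthBVZ}. Finally, \eqref{rdecL2} glues all $\lambda\in\bZ$ together, so the union of the two families is an orthonormal basis of $L^2(M)$.

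The main obstacle is really contained in Theorem~\ref{thm_decR}, whose proof is deferred; given it, the only point requiring attention is the passage from the group intertwining \eqref{acR} to the operator identity for $\cL_M$. A further point is that eigenfunctions of the self-adjoint extension are identified correctly; this holds because the $h_{\lambda,q,\ell}$ are smooth, so $\cL_M$ acts on them classically.
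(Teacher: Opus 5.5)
Your proposal is correct and follows the same route as the paper. The $\chi_\omega$ are handled by Fourier analysis on $\bT^2$ (via Lemma~\ref{lem_L20identL2T2}), and the $h_{\lambda,q,\ell}$ by the unitarity and decomposition of Theorem~\ref{thm_decR} together with the intertwining relation \eqref{action-Vf}, which gives $\cL_M \BWZ_{\lambda,q} = \BWZ_{\lambda,q}\pi_\lambda(\cL)$ on $\cS(\bR)$ and then \eqref{actionhermite}. Your additional remarks on the $L^2$-isometry of the rescaling \eqref{eq:eigsub} and on differentiating \eqref{acR} along smooth vectors fill in details the paper leaves implicit.
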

\begin{proof}
The properties for $\chi_\omega$  follow  from Fourier analysis on the two-dimensional torus, while the ones for $h_{\lambda,\ell, q}$ follows from Theorem \ref{thm_decR}  and the properties of $h_{\ell,\lambda}$ recalled in Section \ref{subsec_schro}.
Indeed, by virtue of \eqref{action-Vf}, one has  
\begin{align*}\cL_M h_{\lambda,q,\ell}&= \cL_M \BWZ_{\lambda,q} h_{\ell,\lambda}= \BWZ_{\lambda,q} \pi_\lambda(\cL) h_{\ell,\lambda},
\end{align*}
which gives the result thanks to \eqref{eq:actsub} and \eqref{actionhermite}. 
\end{proof}

\begin{remark}
The spectral decomposition of $\cL_M$ is well-known.
For instance, Deninger and Singhof in \cite{DeningerSinghof} described an orthogonal basis of $\cL_M$-eigenfunctions of $L^2(M)$ in \cite{DeningerSinghof}.	
	With our notation and after $L^2$-normalisation, their eigenfunctions are the smooth and $L^2$-normalised functions $\chi_\omega$, $\omega\in \bZ^2$, as above, and 
$g_{\lambda,r,\ell}$, $r\in \bZ/\lambda\bZ$,  $\lambda\in \bZ\setminus\{0\}$, $\ell\in \bN_0$,
\[
 g_{\lambda, r, \ell}\left(\Gamma (a,b,c)\right)=
e^{2\pi i( \lambda c +rb)}\sum_{k_1\in \bZ}e^{2\pi i\lambda k_1 b} h_{\ell, \lambda} \left (a+\frac r{\lambda}+k_1\right).
\]
Indeed, we have
\begin{align*}
	\cL_M g_{\lambda,r,\ell} &= 2\pi|\lambda|(2\ell +1)h_{\lambda,r,\ell},\qquad \lambda\in \bZ\setminus\{0\}, \ r\in \bZ/\lambda\bZ, \ \ell\in \bN_0,\\
	\cL_M \chi_\omega &= (2\pi)^2 |\omega|^2 \chi_\omega, \qquad \omega\in \bZ^2.
\end{align*}
This basis is unitarily equivalent to ours since we have  
for any $\lambda\in \bZ\setminus\{0\}$ and $\ell\in \bN_0$,
	\begin{align*}
		h_{\lambda, q,\ell}&=\frac{\sqrt{|\lambda|}}{\lambda} 
\sum_{0\leq r<|\lambda|}e^{-2\pi i q \frac {r} \lambda}
g_{\lambda,r,\ell},\quad q\in \bZ / \lambda \bZ,\\
g_{\lambda, r_0, \ell} &= \frac{\sqrt{|\lambda|}}{\lambda}  
\sum_{0\leq q<|\lambda|}e^{2\pi i q \frac {r_0} \lambda}
h_{\lambda, q, \ell}, \quad r_0\in \bZ / \lambda \bZ.
	\end{align*}
\end{remark}

\subsection{A key property of the $\cL_M$-eigenfunctions $h_{\lambda,q,\ell}$}
\label{subsec_pfThmcorcorlem_idZ}

As emphasised in the proof of Zygmund's estimate in Section \ref{sketch},
the main difficulty in the proof of Theorem~\ref{main} lies in estimating the
$L^4$ norms related to the functions $h_{\lambda,q,\ell}$.
The starting point for these estimates is the following new identity:

\begin{theorem}
\label{thm_keyprop}
Let $\lambda\in \bZ\setminus\{0\}$ 
and $\ell\in \bN_0$. For any  family of coefficients $\gamma_q$,
$q\in \bZ/\lambda\bZ$, we have
\begin{equation}\label{estL4}
	\bigg\|\sum_{q\in \bZ/\lambda\bZ} \gamma_q h_{\lambda,q,\ell}
	\bigg\|_{L^4(M)}^4
	=	
	\sum_{a,b\in \bZ} \Bigg|
	\Big (\sum_{q\in \bZ/\lambda\bZ } \gamma_{q}\overline{\gamma_{q-b}}e^{2\pi i q\frac {a}\lambda}	\Big )
	 \cL_\ell \left (\pi \frac{a^2+b^2}{|\lambda|}\right)\Bigg|^2,
	\end{equation}	
	where $\cL_\ell$ is the Laguerre function of type 0 and degree $\ell\in \bN_0$.
	\end{theorem}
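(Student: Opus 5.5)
The plan is to write $F:=\sum_{q}\gamma_q h_{\lambda,q,\ell}$, note that $\|F\|_{L^4(M)}^4=\|F\overline{F}\|_{L^2(M)}^2=\||F|^2\|_{L^2(M)}^2$, and compute $|F|^2$ through its Fourier expansion on $M$. Since each $h_{\lambda,q,\ell}\in L^2_\lambda(M)$, the product $h_{\lambda,q,\ell}\overline{h_{\lambda,q',\ell}}$ is independent of $c$. By Lemma \ref{lem_L20identL2T2} it lies in $L^2_0(M)\cong L^2(\bT^2)$. Parseval on $\bT^2$ then gives
\[
\|F\|_{L^4(M)}^4=\sum_{(a,b)\in\bZ^2}\Big|\big(|F|^2,\chi_{(a,b)}\big)_{L^2(M)}\Big|^2 ,
\]
so everything reduces to computing the Fourier coefficients $\big(h_{\lambda,q,\ell}\overline{h_{\lambda,q',\ell}},\chi_\omega\big)_{L^2(M)}$ for $\omega\in\bZ^2$.

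To compute these coefficients I would write $(h_{\lambda,q,\ell}\overline{h_{\lambda,q',\ell}},\chi_\omega)=(h_{\lambda,q,\ell}\,\overline{\chi_\omega},h_{\lambda,q',\ell})$ and realise multiplication by a character as a translation, using \eqref{action-osc}. Taking $x_\omega=(\omega_2/\lambda,\,-\omega_1/\lambda,\,0)$ (up to signs and order, to be fixed), the two formulas in \eqref{action-osc} give
\[
R(x_\omega)h_{\lambda,q,\ell}=e^{\text{(phase in }q,\omega)}\,\overline{\chi_\omega}\,h_{\lambda,q+\omega_1,\ell}
\]
or similar. Conversely, $\overline{\chi_\omega}h_{\lambda,q,\ell}$ equals a phase times $R(x_\omega)h_{\lambda,q',\ell}$ with $q'$ shifted by $\pm\omega_1$. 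Then \eqref{acR} and the orthogonality \eqref{orthBVZ} give
\[
(h_{\lambda,q,\ell}\overline{h_{\lambda,q',\ell}},\chi_\omega)=\delta_{q'=q\mp\omega_1}\,e^{2\pi i q\,(\cdot)/\lambda}\,\big(\pi_\lambda(x_\omega)h_{\ell,\lambda},h_{\ell,\lambda}\big)_{L^2(\bR)} .
\]
Finally \eqref{eq_hllamLag} evaluates the last factor as a unimodular phase times $\cL_\ell\big(\pi|\lambda|(\omega_1^2+\omega_2^2)/\lambda^2\big)=\cL_\ell\big(\pi(\omega_1^2+\omega_2^2)/|\lambda|\big)$.

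Summing over $q,q'$, the Fourier coefficient of $|F|^2$ at $\omega=(a,b)$ (after relabelling) becomes
\[
e^{i\theta(a,b)}\Big(\sum_{q}\gamma_q\overline{\gamma_{q-b}}\,e^{2\pi i q a/\lambda}\Big)\cL_\ell\Big(\pi\frac{a^2+b^2}{|\lambda|}\Big),
\]
where the phase $\theta$ does not depend on $q$. Taking absolute values and summing over $\bZ^2$ gives \eqref{estL4}. The relabelling may need the swap $(a,b)\to(b,\pm a)$; this is harmless because the Laguerre factor is symmetric and the summation runs over all of $\bZ^2$.

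The main obstacle is the phase bookkeeping in this second step. I must make sure that after composing the two translations in \eqref{action-osc}, including the central correction coming from the group law, the residual phase splits into a part depending only on $\omega$ (which disappears in the modulus) and a factor exactly $e^{2\pi i q a/\lambda}$. I must also check that it is well defined for $q\in\bZ/\lambda\bZ$. If the direct use of \eqref{action-osc} gets messy, the fallback is to compute the coefficient directly from the series \eqref{eq_hlambdaqell} on the fundamental domain $[0,1)^3$. There the $c$-integral is trivial, the $b$-integral forces $k-k'=\omega_2$, and the sum over $k$ combined with the $a$-integral over $[0,1)$ unfolds to an integral over $\bR$. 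That integral is precisely the matrix coefficient in \eqref{eq_hllamLag}, with the sum over $k$ modulo $\lambda$ producing the exponential sum in $q$.
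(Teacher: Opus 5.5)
Your proposal is correct and is essentially the paper's proof. The paper also writes $\|F\|_{L^4(M)}^4=\|F\overline{F}\|_{L^2(M)}^2$ and applies Plancherel on $[0,1)^2\cong\bT^2$. It then converts each character into a right translation by $x'=(\frac{a'}\lambda,\frac{b'}\lambda,\frac{a'b'}{\lambda^2})$ via \eqref{action-osc}, uses \eqref{orthBVZ}--\eqref{acR} to obtain $\delta_{q_1=q_2+b'}\,e^{2\pi i q_1 a'/\lambda}\,(h,\pi_\lambda(x')h)$, and finishes with \eqref{eq_hllamLag}. The only organisational difference is that the paper proves this first for arbitrary $h\in\cS(\bR)$ (Proposition \ref{prop_idZ}) and then specialises to $h=h_{\ell,\lambda}$; your phase bookkeeping does close up, since with $x_\omega=(\omega_2/\lambda,-\omega_1/\lambda,-\omega_1\omega_2/\lambda^2)$ one gets $\delta_{q'=q-\omega_1}\,e^{-2\pi i (q-\omega_1)\omega_2/\lambda}$.
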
 
	
To prove  Theorem \ref{thm_keyprop}, we start by pointing out that  the 
traditional Zak transform $Z$  coincide with our Berezin-Weil-Zak transform for $\lambda=1$, $q=0$,  restricted to a part of the fundamental domain, namely $[0,1)^2\times \{0\}$, that is, 
$$
Zh(a,b) := \BWZ_{1,0} h  (a,b,0)
= \sum_{k\in \bZ} 
e^{2\pi i   k b} h\left (k+ a\right ),
\qquad (a,b)\in [0,1)^2, \ h\in \cS(\bR);
$$
The set 
$[0,1)^2$ is often identified with the two-dimensional torus~$\bT^2 = \bR^2/\bZ^2$.
Well-known identities for the Zak transform 
\cite{Folland} (see also \cite{FaulhuberSZ})
generalise to $\BWZ_{\lambda,q}$, for instance: 

\begin{proposition}
	\label{prop_idZ}
Let $\lambda\in \bZ\setminus\{0\}$. For any $h\in \cS(\bR)$ and any family of coefficients $\gamma_q$,
$q\in \bZ/\lambda\bZ$, we have
$$
	\bigg  \|\sum_{q\in \bZ/\lambda\bZ} \gamma_q\BWZ_{\lambda,q} h
	\bigg \|_{L^4(M)}^4
	=	
	\sum_{a,b\in \bZ} \bigg|
	\big (\sum_{q\in \bZ/\lambda\bZ } \gamma_{q}\overline{\gamma_{q-b}}e^{2\pi i q\frac {a}\lambda}	\Big )
	 \left(h,  \pi_\lambda\left (\frac {a}{\lambda} ,\frac{b}\lambda,0\right ) h\right)_{L^2(\bR)}\bigg|^2.
	$$	
	\end{proposition}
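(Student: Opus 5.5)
Set $F:=\sum_{q\in \bZ/\lambda\bZ}\gamma_q \BWZ_{\lambda,q}h$. The plan is to write $\|F\|_{L^4(M)}^4=\||F|^2\|_{L^2(M)}^2$ and to expand the function $G:=|F|^2$ in the orthonormal basis $(\chi_\omega)_{\omega\in\bZ^2}$ of $L^2_0(M)$. Since $F\in L^2_\lambda(M)$, we have $F(\Gamma(a,b,c))=F(\Gamma(a,b,0))e^{2\pi i\lambda c}$ by \eqref{defsub}. The factor $e^{2\pi i\lambda c}$ cancels in $|F|^2$, so $G$ lies in $L^2_0(M)$, which is identified with $L^2(\bT^2)$ by Lemma \ref{lem_L20identL2T2}. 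Parseval on $\bT^2$ then gives
\[
\|F\|_{L^4(M)}^4=\sum_{(a,b)\in\bZ^2}\big|(G,\chi_{(a,b)})_{L^2(M)}\big|^2 .
\]
Since $G$ is smooth ($h\in\cS(\bR)$), these coefficients are well defined, and everything reduces to identifying $(|F|^2,\chi_{(a,b)})_{L^2(M)}$.

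To compute these coefficients, I use the translation formulas \eqref{action-osc} with the group elements $(a/\lambda,0,0)$ and $(0,b/\lambda,0)$. The multiplicative factors they produce are $e^{-2\pi i a b'}$ and $e^{2\pi i b a'}$, where $(a',b')$ are the coordinates of the point. These are exactly characters $\chi_\omega$, so the Fourier coefficient of $|F|^2$ can be written as an inner product between $F$ and a right translate of $F$. Concretely, I take the element $g_{a,b}:=(a/\lambda,0,0)(0,b/\lambda,0)=(a/\lambda,b/\lambda,ab/\lambda^2)$. Combining the two lines of \eqref{action-osc} gives
\[
R(g_{a,b})\BWZ_{\lambda,q}h=e^{-2\pi i a b'}e^{2\pi i q a/\lambda}\,e^{2\pi i b a'}\cdot(\text{phase})\,\BWZ_{\lambda,q-b}h ,
\]
with a careful bookkeeping of the order of the translations. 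Using this to re-index, $\sum_q\gamma_q\BWZ_{\lambda,q}h$ becomes $\sum_q\gamma_q\overline{\gamma_{q-b}}e^{2\pi i qa/\lambda}$ paired with something independent of $q$. That paired factor is then handled by orthogonality \eqref{orthBVZ} together with the intertwining relation \eqref{acR}. The result is $(\BWZ_{\lambda,q'}h,R(g)\BWZ_{\lambda,q'}h)=\big(h,\pi_\lambda(g)h\big)_{L^2(\bR)}$, with $\pi_\lambda(g)$ unitary.

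This suggests the following cleaner route. Write $|F|^2=F\overline F$ and pair with $\chi_{\omega}$. I then try to express $\chi_\omega\,\overline{F}$ as a $\lambda$-modulated combination of the functions $\overline{\BWZ_{\lambda,q-b}h}$, translated by $(a/\lambda,b/\lambda,0)$. Orthogonality \eqref{orthBVZ} collapses the resulting double sum over $(q,q')$ to a single sum over $q$. The inner product left in each term is $\big(h,\pi_\lambda(a/\lambda,b/\lambda,0)h\big)_{L^2(\bR)}$, after absorbing the central component via Lemma \ref{lem_RactsL2lambda}. The central component $ab/\lambda^2$ only produces a phase $e^{2\pi i ab/\lambda}$ independent of $q$, which disappears inside the modulus squared.

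The main obstacle is this middle step. The difficulty is to verify that multiplication by $\chi_{(a,b)}$ matches translation by $(a/\lambda,b/\lambda,\cdot)$ up to exactly the phases $e^{2\pi i qa/\lambda}$ and the shift $q\mapsto q-b$. The signs and the order of composition (right action, noncommutativity) must be tracked carefully. I would first check the formula directly from the series \eqref{eq:BWZ}. Multiplying $\BWZ_{\lambda,q}h$ by $e^{2\pi i(\alpha a'+\beta b')}$ shifts $k\mapsto k+\beta$ in the exponent $e^{2\pi i k b'}$. This gives $h(\frac{k-\beta}{\lambda}+a')$ and $e^{-2\pi i q(k-\beta)/\lambda}$, and the factor $e^{2\pi i \alpha a'}$ is absorbed using \eqref{repHe}. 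Comparing with $\pi_\lambda$ applied to $h$ then yields the needed identity, from which the displayed formula follows after summation over $(a,b)\in\bZ^2$.
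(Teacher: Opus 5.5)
Your proposal is correct and follows the paper's proof: write the $L^4$ norm as $\||F|^2\|_{L^2}^2$, expand the $c$-independent function $|F|^2$ in Fourier series on $\bT^2$, turn each character into a right translation by $(a/\lambda,0,0)(0,b/\lambda,0)$ via \eqref{action-osc}, collapse the double sum in $(q_1,q_2)$ with \eqref{orthBVZ} and \eqref{acR}, and drop the central phase $e^{2\pi i ab/\lambda}$ inside the modulus. One bookkeeping correction: the character that accompanies translation by $g_{a,b}$ is $e^{2\pi i(ba'-ab')}=\chi_{(b,-a)}$, not $\chi_{(a,b)}$; the paper avoids this by expanding $|F|^2$ directly in that symplectic Fourier basis, and the mismatch is harmless because $(a,b)\mapsto(b,-a)$ is a bijection of $\bZ^2$.
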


\begin{proof}[Proof of Proposition \ref{prop_idZ}]
We observe
\begin{align*}
	\bigg \|\sum_{q\in \bZ/\lambda\bZ} \gamma_q\BWZ_{\lambda,q} h
	\bigg \|_{L^4(M)}^4& = 
	\bigg \|\sum_{q_1,q_2\in \bZ/\lambda\bZ} \gamma_{q_1} \overline{\gamma_{q_2}} \BWZ_{\lambda,q_1} h\overline{\BWZ_{\lambda,q_2} h} \bigg\|_{L^2(M)}^2 \\
	& = 
	\int_{[0,1]^2}  |F(a,b)|^2  dadb,
	\end{align*}
with 
\[
F(a,b):=\sum_{q_1,q_2\in \bZ/\lambda\bZ} \gamma_{q_1} \overline{\gamma_{q_2}} (\BWZ_{\lambda,q_1} h\overline{\BWZ_{\lambda,q_2} h})(a,b,0).
\]
We now expand $F\in L^2([0,1]^2)$ in Fourier series and obtain by the Plancherel formula:
\begin{align*}
\int_{[0,1]^2}  |F(a,b)|^2 (a,b,0) dadb
&=	
\sum_{a',b'\in \bZ} 
	\bigg| \int_{[0,1]^2} F(a,b) e^{2\pi i (a'b-ab')} da db \bigg|^2.
\end{align*}
Hence, we have obtained:
\begin{align}
	&\bigg \|\sum_{q\in \bZ/\lambda\bZ} \gamma_q\BWZ_{\lambda,q} h
	\bigg \|_{L^4(M)}^4\nonumber
	\\	
	&\quad = \sum_{a',b'\in \bZ} 
	\bigg|\sum_{q_1,q_2\in \bZ/\lambda\bZ} \gamma_{q_1} \overline{\gamma_{q_2}} \int_{[0,1]^2} (\BWZ_{\lambda,q_1} h \ \overline{\BWZ_{\lambda,q_2} h})(a,b,0) \ e^{2\pi i (a'b-ab')} da db \bigg|^2.
	\label{eq_pfcorlem_idZ}
\end{align}

In order to analyse each integral over $[0,1]^2$ in the right-hand side of \eqref{eq_pfcorlem_idZ}, we start with noticing   that for any $x'=(a',b',c')\in \bH_1$, the integral
\begin{equation}
\label{eq_pflem_idZ}
\int_M \BWZ_{\lambda,q_1} h_1(\dot x)\overline{\BWZ_{\lambda,q_2} h_2(\dot xx')}d\dot x	
\end{equation}
is equal to 0 if $q_1\neq q_2$ in $\bZ/\lambda \bZ$ since
		$L^2_{\lambda,q_1}\perp L^2_{\lambda,q_2}$ when $q_1\neq q_2$ (see Theorem \ref{thm_decR}), while if~$q_1= q_2$ in $\bZ/\lambda \bZ$, 
then, by virtue of \eqref{orthBVZ}-\eqref{acR} and \eqref{repHe},  it is equal to 
$$
 \left(h_1,  \pi_\lambda(x') h_2\right)_{L^2(\bR)}=
		\int_\bR  h_1(u) \overline{\pi_\lambda(x') h_2(u)}du\\
		= \int_\bR h_1(u) \overline{ h_2(u+a')}
		e^{-2\pi i \lambda(c'+ub')}du, 
		$$
and in this case, it is independent of $q_1=q_2$ in $\bZ/\lambda \bZ$.

According to  \eqref{action-osc}, for
\begin{equation}
	\label{eq_x'lem_propBWZ1}
	x'=\left (\frac {a'}\lambda,0,0\right)\left (0,\frac {b'}\lambda,0\right )
=\left (\frac {a'}\lambda,\frac {b'}\lambda,\frac {a'b'}{\lambda^2}\right ),\qquad a',b'\in \bZ,
\end{equation}
the formula in
\eqref{eq_pflem_idZ} is equal to 
\begin{align*}
&\int_{[0,1]^2} \BWZ_{\lambda,q_1} h_1(a,b,0)
\overline{\BWZ_{\lambda,q_2} h_2\left ((a,b,0)x'\right )}dadb	\\
&\quad = 
\int_{[0,1]^2} \BWZ_{\lambda,q_1} h_1( a,b,0)
\overline{ 
e^{2\pi i b' (a+\frac {a'}\lambda)} 
\BWZ_{\lambda,q_2-b'} h_2\big((a,b,0)(\frac {a'}\lambda,0,0)\big)}dadb\\
&\quad = 
\int_{[0,1]^2} \BWZ_{\lambda,q_1} h_1( a,b,0)
\overline{e^{2\pi i b' (a+\frac {a'}\lambda)}  
e^{-2\pi i a'b}e^{2\pi i (q_2-b')\frac {a'}\lambda}
\BWZ_{\lambda,q_2-b'} h_2(a,b,0)}dadb\\
&\quad = 
e^{-2\pi i q_2\frac {a'}\lambda}
\int_{[0,1]^2} \BWZ_{\lambda,q_1} h_1( a,b,0)
e^{2\pi i (a'b-ab')}
\overline{ 
\BWZ_{\lambda,q_2-b'} h_2(a,b,0)}dadb.
\end{align*}
We have therefore obtained that each integral in the right-hand side of \eqref{eq_pfcorlem_idZ} is 
\begin{align*}
&\int_{[0,1]^2} \BWZ_{\lambda,q_1} h_1( a,b,0)
e^{2\pi i (a'b-ab')}
\overline{ 
\BWZ_{\lambda,q_2} h_2(a,b,0)}dadb
\\&\quad =e^{2\pi i (q_2+b')\frac {a'}\lambda}	
\int_{M} \BWZ_{\lambda,q_1} h_1(\dot x)
\overline{\BWZ_{\lambda,q_2+b'} h_2(\dot x x')}d\dot x
\\&\quad=\begin{cases}
	0 &\text{if}\ q_1 \neq q_2 +b' \ \text{mod} \ \lambda\\
	e^{2\pi i q_1\frac {a'}\lambda}	
\left(h,  \pi_\lambda(x') h\right)_{L^2(\bR)} 
&\text{otherwise},
\end{cases}
\end{align*}
with $x'$ as in \eqref{eq_x'lem_propBWZ1}.
Going back to \eqref{eq_pfcorlem_idZ}, 
we obtain
\begin{align*}
&\bigg \|\sum_{q\in \bZ/\lambda\bZ} \gamma_q\BWZ_{\lambda,q} h
	\bigg \|_{L^4(M)}^4\nonumber
	\\	
	&\quad = \sum_{a',b'\in \bZ} 
	\bigg|\sum_{q_1\in \bZ/\lambda\bZ} \gamma_{q_1} \overline{\gamma_{q_1-b'}} 
	e^{2\pi i q_1\frac {a'}\lambda}	
\left(h,  \pi_\lambda\left (\frac {a'}\lambda,\frac {b'}\lambda,\frac {a'b'}{\lambda^2}\right ) h\right)_{L^2(\bR)}
\bigg|^2
\\	
	&\quad = \sum_{a',b'\in \bZ} 
	\bigg|\sum_{q_1\in \bZ/\lambda\bZ} \gamma_{q_1} \overline{\gamma_{q_1-b'}} 
	e^{2\pi i q_1\frac {a'}\lambda}	
\left(h,  \pi_\lambda\left (\frac {a'}\lambda,\frac {b'}\lambda,0\right ) h\right)_{L^2(\bR)}
\bigg|^2,	
\end{align*}
showing the desired formula.
\end{proof}

\begin{proof}[Proof of Theorem \ref{thm_keyprop}]
The proof of Theorem \ref{thm_keyprop} follows readily by applying Proposition \ref{prop_idZ} to $h=h_{\lambda,\ell}$
and then using \eqref{eq_hllamLag}.
\end{proof}

\subsection{Proof of Theorem \ref{main}}\label{Proof-strategy}

\subsubsection{Spectral decomposition of~$\sqrt {\cL_M}$}
In view of   Theorem \ref{thm_spdec}, we know that the  spectrum of~$\sqrt \cL_M$ is the following discrete subset of $[0,+\infty)$:
$$
\spec (\sqrt{\cL_M})=
\left\{ \sqrt{ 2\pi|\lambda|(2\ell +1)} \colon  \lambda\in \bZ\setminus\{0\}, \ q\in \bZ/\lambda\bZ, \ \ell\in \bN_0\right\} \cup \left \{ 2\pi |\omega| \colon  \omega\in \bZ^2\right\}.
$$
Its orthogonal decomposition is described by the orthogonal projection $\Pi_\mu$, $\mu\in \spec (\sqrt{\cL_M})$, given for any $f\in L^2(M)$ by 
\begin{equation}\label{spproj}
\Pi_\mu (f) = \sum_{\substack{
\lambda\in \bZ\setminus\{0\}, q\in \bZ/\lambda\bZ, \ell\in \bN_0\\
\sqrt{ 2\pi|\lambda|(2\ell +1)} = \mu }} 
(f,h_{\lambda,q,\ell})_{L^2(M)} h_{\lambda,q,\ell}
\ + \ \sum_{\substack{\omega\in \bZ^2\\ 
2\pi |\omega| =\mu}}
(f,\chi_\omega)_{L^2(M)}\chi_\omega.
\end{equation}
Then
\begin{equation}\label{spprojdec}
\|\Pi_\mu (f)\|^2_{L^2(M)} = \sum_{\substack{
\lambda\in \bZ\setminus\{0\}, q\in \bZ/\lambda\bZ, \ell\in \bN_0\\
\sqrt{ 2\pi|\lambda|(2\ell +1)} = \mu }} 
|(f,h_{\lambda,q,\ell})_{L^2(M)}|^2
\ + \ \sum_{\substack{\omega\in \bZ^2\\ 
2\pi |\omega| =\mu}}
|(f,\chi_\omega)_{L^2(M)}|^2.
\end{equation}
Note that the above sums given by \eqref{spproj} are always finite  for a given $\mu\in \spec (\sqrt{\cL_M})$, since~$|\lambda|\leq {\mu^2} / {2\pi} $ and~$2\ell +1 \leq \mu^2 / 2\pi$. 
They are empty when $\mu\notin \spec (\sqrt{\cL_M})$; in this case $\Pi_\mu =0$. 
Moreover, the spectral projectors $\Pi_\mu$'s and the orthogonal projectors $\Pr_\lambda$ onto $L^2_\lambda(M)$ (defined in \eqref{eqdef_Prlambda}) commute by construction:
$$ 
\forall \lambda\in \bZ, \ \mu\geq 0, \qquad \Pi_\mu \Pr_\lambda = \Pr_\lambda \Pi_\mu,
$$
with 
$$
\Pi_\mu \Pr_0 f = \Pr_0\Pi_\mu  f=\sum_{\omega\in \bZ^2\colon 
2\pi |\omega| =\mu}
(f,\chi_\omega)_{L^2(M)}\chi_\omega, 
$$
and for $\lambda\neq 0$,
\begin{equation}
\label{eq_PimuPrf}
\Pi_\mu \Pr_\lambda f =  \Pr_\lambda \Pi_\mu f =\sum_{\substack{
 q\in \bZ/\lambda\bZ, \ell\in \bN_0\\
\sqrt{ 2\pi|\lambda|(2\ell +1)} = \mu }}  (f,h_{\lambda,q,\ell})_{L^2(M)} h_{\lambda,q,\ell} .
\end{equation}

\subsubsection{The key property of $\Pi_\mu \Pr_\lambda $}

We have already observed that Zygmund's proof of the restriction theorem on
$\mathbb T^2$~\cite{zygmund} relies on arithmetic properties of the
$L^4$ norms of a particular basis of eigenfunctions, see
Section~\ref{sketch}. In the Heisenberg nilmanifold setting, the analogue of this combined with Theorem \ref{thm_keyprop}
yield
the following statement.
It   will play a crucial role in the proof of
Theorem~\ref{main}:

\begin{proposition}
	\label{propkeyprop} 
	
We fix  $\lambda\in \bZ\setminus\{0\}$ and $\mu\geq 0$. 
We assume that $\Pi_\mu \Pr_{\lambda}\neq 0$.
Consequently,~$\mu$ satisfies $2\pi \mu^2 = |\lambda|(2\ell+1)$ for a (fixed) $\ell\in \bN_0$, and 
we have 
\begin{align*}
\|\Pi_\mu \Pr_\lambda f\|_{L^4(M)} 
&\leq  
\|\Pi_\mu \Pr_\lambda f\|_{L^2(M)} 
\bigg(\sum_{a,b\in \bZ} \left|
		 \cL_\ell \left (\pi \frac{a^2+b^2}{|\lambda|}\right)\right|^2\bigg)^{1/4}\\
		 &\lesssim_\epsilon \mu^{ (1+\epsilon)/2}\|\Pi_\mu \Pr_\lambda f\|_{L^2(M)} ,
		 \end{align*}
		 with an implicit constant depending on $\epsilon\in (0,1)$ but not on $f\in L^2(M)$, $\lambda\in \bZ\setminus\{0\}$ and $\mu$.
\end{proposition}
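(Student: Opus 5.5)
The plan is to reduce the first inequality to Theorem~\ref{thm_keyprop} together with a Cauchy--Schwarz argument. The second inequality then becomes a lattice-point count weighted by Laguerre functions. Note that the condition reads $\mu^2 = 2\pi|\lambda|(2\ell+1)$ with the normalisation of Theorem~\ref{thm_spdec}. Since $\lambda$ and $\mu$ are fixed, this determines $\ell$ uniquely.

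\emph{Step 1 (first inequality).} By \eqref{eq_PimuPrf} and the uniqueness of $\ell$, we have $\Pi_\mu\Pr_\lambda f=\sum_{q\in\bZ/\lambda\bZ}\gamma_q h_{\lambda,q,\ell}$ with $\gamma_q=(f,h_{\lambda,q,\ell})_{L^2(M)}$. By orthonormality (Theorem~\ref{thm_spdec}), $\|\Pi_\mu\Pr_\lambda f\|_{L^2(M)}^2=\sum_q|\gamma_q|^2$. We apply Theorem~\ref{thm_keyprop}. For each $(a,b)$, Cauchy--Schwarz in $q$ gives
\[
\Big|\sum_{q}\gamma_q\overline{\gamma_{q-b}}e^{2\pi i q a/\lambda}\Big|\le \Big(\sum_q|\gamma_q|^2\Big)^{1/2}\Big(\sum_q|\gamma_{q-b}|^2\Big)^{1/2}=\|\Pi_\mu\Pr_\lambda f\|_{L^2(M)}^2 ,
\]
because $q\mapsto q-b$ is a bijection of $\bZ/\lambda\bZ$. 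Inserting this into \eqref{estL4} and taking fourth roots yields the first inequality.

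\emph{Step 2 (reduction to a lattice sum).} It remains to show
\[
S:=\sum_{a,b\in\bZ}\Big|\cL_\ell\Big(\pi\tfrac{a^2+b^2}{|\lambda|}\Big)\Big|^2\lesssim_\epsilon \mu^{2+2\epsilon},
\qquad\text{where } \mu^2\asymp |\lambda|(\ell+1).
\]
Set $x=\pi(a^2+b^2)/|\lambda|$ and split the sum at $x\le K(\ell+1)$ versus $x>K(\ell+1)$, for a large absolute constant $K$. In the first region we use the uniform bound $|\cL_\ell(x)|\le 1$ for the Laguerre function $e^{-x/2}L_\ell(x)$ of type $0$. This leaves the number of lattice points in a disc of radius squared $K(\ell+1)|\lambda|/\pi$. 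That number is $\lesssim |\lambda|(\ell+1)+1\lesssim\mu^2$, either by Gauss's circle count or by the cruder bound $\sum_{n\le N}r_2(n)$ with $r_2(n)\lesssim_\epsilon n^{\epsilon}$. The latter cruder route is where an $\mu^{\epsilon}$ loss can enter harmlessly.

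\emph{Step 3 (tail, the main obstacle).} The tail needs exponential decay of $\cL_\ell$ beyond the turning point $x\approx 4\ell+2$, uniformly in $\ell$. The first thing to try is the explicit expansion $L_\ell(x)=\sum_k\binom{\ell}{k}(-x)^k/k!$. It gives $|L_\ell(x)|\le 2^\ell\max(1,x)^\ell/\ell!\cdot(\ell+1)$, hence $|\cL_\ell(x)|\le e^{-x/4}$ once $x\ge K(\ell+1)$ with $K$ large; this uses Stirling and the fact that $x^\ell e^{-x/4}/\ell!$ is tiny for $x\gg\ell$. Alternatively, one can quote the classical Askey--Wainger/Erdélyi bounds recalled in the appendix. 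The tail is then bounded by
\[
\sum_{a,b}e^{-\pi(a^2+b^2)/(2|\lambda|)}\lesssim |\lambda|+1\lesssim\mu^2 .
\]
Combining Steps 2 and 3 gives $S\lesssim_\epsilon\mu^{2+2\epsilon}$, and the fourth root gives the claimed $\mu^{(1+\epsilon)/2}$. The constants are independent of $f$, $\lambda$ and $\mu$.
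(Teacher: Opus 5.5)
Your proposal is correct. Step 1 is exactly the paper's argument. The two proofs differ in how they bound $S=\sum_{a,b}|\cL_\ell(\pi\frac{a^2+b^2}{|\lambda|})|^2$.

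\textbf{How the paper bounds $S$.} It does not split the lattice sum. It uses \eqref{linfty} to write $|\cL_\ell|^2\le|\cL_\ell|^{1+\epsilon}$ away from the origin. It then applies the uniform decay $|\cL_\ell(v)|\le C(2\ell+1)/v$ of \eqref{eq_LaguerreBound}, which follows from the three-term recurrence \eqref{prop}. This gives $S\lesssim 1+(|\lambda|(2\ell+1))^{1+\epsilon}\sum_{(a,b)\neq 0}(a^2+b^2)^{-1-\epsilon}$. The exponent $1+\epsilon$ is there only to make this last sum converge, and that is where the paper loses $\mu^{\epsilon}$.

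\textbf{What your split buys.} Splitting at $v\approx K(\ell+1)$ and using the Gauss count inside gives $S\lesssim\mu^2$. So you actually prove the proposition with no $\epsilon$ at all, which is stronger than you state. This does not improve Theorem \ref{main}, where the divisor bound still costs $\mu^{\epsilon}$.

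\textbf{You do not need the exponential tail.} Once you split, the paper's polynomial bound already suffices. Set $N=|\lambda|(2\ell+1)/\pi$. Counting lattice points in the dyadic annuli $2^jN<a^2+b^2\le 2^{j+1}N$ gives $\sum_{a^2+b^2>N}(N/(a^2+b^2))^2\lesssim\sum_{j\ge0}(2^jN+1)2^{-2j}\lesssim N+1$. Your Stirling computation can therefore be dropped.

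\textbf{Two small corrections.}
\begin{enumerate}
\item The inequality $|L_\ell(x)|\le(\ell+1)2^\ell\max(1,x)^\ell/\ell!$ is false for small $x$; for example $L_{10}(1)\approx 0.42$, whereas the right-hand side is about $0.003$. It does hold for $x\ge\ell$, because then $x^{\ell-k}\ge \ell^{\ell-k}\ge \ell!/k!$ gives $x^k/k!\le x^\ell/\ell!$ for every $k\le\ell$. That range covers your tail region, so the argument survives once the range is stated correctly.
\item The appendix does not recall Askey--Wainger or Erd\'elyi bounds. It contains only \eqref{linfty}, \eqref{prop} and \eqref{eq_LaguerreBound}.
\end{enumerate}
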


\begin{proof}[Proof of Proposition \ref{propkeyprop}]
Using \eqref{eq_PimuPrf}, we set $\gamma_{\lambda,q,\ell}= (f,h_{\lambda,q,\ell})_{L^2(M)}$, and we have 
\[
\Pi_\mu \Pr_{\lambda} f= \sum_{
q\in \bZ/\lambda\bZ}
\gamma_{\lambda,q,\ell} h_{\lambda,q,\ell}.
\]
The first inequality follows from Theorem \ref{thm_keyprop} by the Cauchy-Schwartz inequality since
\[
\Big |\sum_{q\in \bZ/\lambda\bZ } \gamma_{\lambda, q,\ell}\overline{\gamma_{\lambda,q+b,\ell}}e^{2\pi i q\frac {a}\lambda}	\Big |^2
 \leq \Big(\sum_{q\in \bZ/\lambda\bZ } |\gamma_{\lambda,q,\ell}|^2 \Big)^2=\|\Pi_\mu \Pr_\lambda f\|_{L^2(M)} ^4.
\]

The Laguerre bounds in \eqref{linfty} and \eqref{eq_LaguerreBound} imply for all $\epsilon>0$
	\begin{align*}
\sum_{a,b\in \bZ} \left|
		 \cL_\ell \left (\pi \frac{a^2+b^2}{|\lambda|}\right)\right|^2 &\leq  
		 1+ 
	\sum_{(a,b)\in \bZ^2\setminus \{(0,0)\}}
		 \left|
		 \cL_\ell \left (\pi \frac{a^2+b^2}{|\lambda|}\right)\right|^{1+\epsilon}\\
		 &\lesssim  
		 1+ 
\sum_{(a,b)\in \bZ^2\setminus \{(0,0)\}} \left(
 \frac{2\ell+1}{\pi \frac{a^2+b^2}{|\lambda|}}
\right)^{1+\epsilon},
		 \end{align*}	 
		 and we estimate readily
		\begin{align*}
\sum_{(a,b)\in \bZ^2\setminus \{(0,0)\}} \left(
\frac{2\ell+1}{\pi \frac{a^2+b^2}{|\lambda|}}
\right)^{1+\epsilon}
&\lesssim_\epsilon (|\lambda|(2\ell+1))^{1+\epsilon}
\sum_{(a,b)\in \bZ^2\setminus \{(0,0)\}} 
(a^2+b^2)^{-(1+\epsilon)}.
		 \end{align*}	 	 
		 Since $\sum_{(a,b)\in \bZ^2\setminus \{(0,0)\}} 
(a^2+b^2)^{-(1+\epsilon)}$ is finite, this concludes the proof. 
\end{proof}

\subsubsection{End of the proof of Theorem \ref{main}}
We want to show that 
\begin{equation}
	\label{eq_WTSmain}
\|\Pi_\mu f\|_{L^4(M)}  \lesssim_\epsilon \mu^{ \frac 1 2+\epsilon} \|f\|_{L^2(M)}.
\end{equation}
Using the projectors $\Pr_\lambda$ for the decomposition 
$L^2(M)=\oplus^\perp_\lambda L^2_\lambda(M)$
defined in \eqref{eqdef_Prlambda}, 
 we have
 \[
 \|\Pi_\mu f\|_{L^4(M)} 
=\|\Pi_\mu \sum_{\lambda\in \bZ} \Pr_\lambda f\|_{L^4(M)} 
\leq \sum_{\lambda\in \bZ}\|\Pi_\mu  \Pr_\lambda f\|_{L^4(M)}.
\]

For $\lambda=0$, since $\Pi_\mu$ and $\Pr_0$ commute, we have 
$$
\|\Pi_\mu \Pr_0 f\|_{L^4(M)}
=\|\Pr_0\Pi_\mu  f\|_{L^4(M)}
= \|\Pr_0\Pi_\mu  f|_{c=0}\|_{L^4(\bT^2)},
$$
by Lemma \ref{lem_L20identL2T2}, while by Zygmund's estimate in  \eqref{zyg2ddual}, 
$$
\|\Pr_0\Pi_\mu  f|_{c=0}\|_{L^4(\bT^2)}\leq 5^{1/4} \|\Pr_0\Pi_\mu  f_{c=0}\|_{L^2(\bT^2)}.
$$
Now, 
$$
\|\Pr_0\Pi_\mu  f_{c=0}\|_{L^2(\bT^2)} =\|\Pr_0\Pi_\mu  f\|_{L^2(M)} \leq \|\Pi_\mu  f\|_{L^2(M)},
$$
by  Lemma \ref{lem_Prlambda}. Hence, we have obtained
\begin{equation}
	\label{eq_term0}
 \|\Pi_\mu \Pr_0 f\|_{L^{4}(M)}\leq 5^{1/4}\|\Pi_\mu   \Pr_0 f\|_{L^2(M)}\leq 5^{1/4}\|f\|_{L^2(M)}.
\end{equation}
	
\medskip 

For $\lambda\neq 0$, 
 when $\Pi_\mu \Pr_\lambda \neq 0$, 
we have
by Proposition \ref{propkeyprop}, 
\[
\|\Pi_\mu \Pr_\lambda f\|_{L^4(M)}\lesssim_\epsilon \mu^{ (1+\epsilon)/2}\|\Pi_\mu \Pr_\lambda f\|_{L^2(M)},
\]
and furthermore, $|\lambda|$ divides $2\pi \mu^2$.
Consequently, 
\begin{align*}
& \sum_{\lambda\in \bZ\setminus \{0\} }
\|\Pi_\mu \Pr_\lambda f\|_{L^4(M)}
\lesssim_\epsilon \mu^{ (1+\epsilon)/2}
\sum_{\lambda\in \bZ\setminus \{0\} } \|\Pi_\mu \Pr_\lambda f\|_{L^2(M)}
\\
&\qquad \lesssim_\epsilon \mu^{ (1+\epsilon)/2}
\sqrt{\sum_{\lambda\in \bZ\setminus \{0\} } \|\Pi_\mu \Pr_\lambda f\|_{L^2(M)}^2}
\sqrt{\sum_{\substack{\lambda\in \bZ\setminus \{0\}\\
 \text{dividing}\, \frac{\mu^2 }{2\pi} }} 1},
\end{align*}
by the Cauchy-Schwartz inequality. 
By the Plancherel formula, 
\[\sum_{\lambda\neq 0 } \|\Pi_\mu \Pr_\lambda f\|_{L^2(M)}^2 
= \|\sum_{\lambda\neq 0 }  \Pi_\mu \Pr_\lambda f\|_{L^2(M)}^2 
\leq \|f\|_{L^2(M)}^2.
\]
Recall that the number-of-divisor function of a positive integer $n$ often denoted by  $\tau(n)=\sum_{k\mid n} 1$ satisfies $\tau(n)\lesssim_\delta n^\delta$, for any $\delta>0$ \cite{HW}.
Hence, we have obtained 
 \[
  \sum_{\lambda\neq 0 }
\|\Pi_\mu \Pr_\lambda f\|_{L^4(M)}
\lesssim_\epsilon \mu^{ \frac 12 +\epsilon }
\|f\|_{L^2(M)}.
\]

Combining the cases $\lambda=0$ in \eqref{eq_term0}
and the above sum over $\lambda\neq 0$ yields \eqref{eq_WTSmain} and concludes the proof of Theorem \ref{main}.

\subsection{Proof of the sharpness}

This section is devoted to the proof of Proposition \ref{prop:sharpness} that shows the sharpness of our result in Theorem \ref{main}.
It relies on noticing that the sum in $q\in \bZ/\lambda\bZ$ in \eqref{estL4} is in fact related to the discrete short-time Fourier transform.

\subsubsection{A few words about  the discrete short-time Fourier transform} 
 Recall that the discrete short-time Fourier transform  of any two functions $\gamma,\eta:\bZ/\lambda \bZ\to \bC$ is the function $V_\gamma \eta : (\bZ/\lambda \bZ)^2\to \bC$ defined via
\[
V_\gamma \eta(a,b):=\sum_{q\in \mathbb Z/\lambda\mathbb Z}
\eta (q)\,\overline{\gamma(q-a)}\,e^{-2\pi i bq/\lambda}.
\]
The finite setting considered here is a special case of Gabor analysis on
locally compact abelian groups;
references  can be found in the literature for time-frequency analysis such as
\cite{FeichKL,Gro,Gro2}.

\subsubsection{Construction of a special function}
Our proof of the sharpness (Proposition \ref{prop:sharpness}) relies on the construction of a special function $\gamma=g$ in the following statement:  

\begin{lemma}
\label{lem_Vgg}
Let
\(\lambda\in \mathbb N\), and let \(A\in \mathbb N\) with \(8A<\lambda\).
We consider the indicatrix function $g := 1_{[-A,A]}$ of the interval $[-A,A]$ as a function on $\bZ / \lambda \bZ$. 
Then we have
\[
\|g\|_{\ell^2(\mathbb Z/\lambda\mathbb Z)}^2
=
2A+1,
\]
and with $\cL_0(v)=e^{-\frac {v}2}$, 
\[
\sum_{a,b\in\mathbb Z}
\left|
V_g g(a,b)\,\mathcal L_0\!\left(\pi\frac{a^2+b^2}{\lambda}\right)
\right|^2
\ge  e^{-\pi \frac {A^2}\lambda  }  e^{-\pi \frac \lambda   {A^2}} \left(\frac{2A}{\pi}\right)^2\left(\lambda  + 2A+1\right) .
\]
\end{lemma}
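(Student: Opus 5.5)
The plan is to discard most of the sum over $(a,b)\in\bZ^2$, keep only a box of small $a$ and $b$, and bound $|V_gg(a,b)|$ from below uniformly on that box.

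The first identity is immediate. Since $8A<\lambda$, the integers $-A,\dots,A$ are pairwise distinct modulo $\lambda$, so $g$ takes the value $1$ at exactly $2A+1$ points of $\bZ/\lambda\bZ$.

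For the lower bound, all terms in the sum over $(a,b)\in\bZ^2$ are nonnegative, so I keep only the box
\[
|a|\le A,\qquad |b|\le B:=\Big\lfloor \frac{\lambda}{2(2A+1)}\Big\rfloor .
\]
On this box $\mathcal L_0(\pi(a^2+b^2)/\lambda)^2=e^{-\pi a^2/\lambda}e^{-\pi b^2/\lambda}$. Using $a^2\le A^2$ and $b^2\le \lambda^2/A^2$ (since $B\le\lambda/(4A)$), this is bounded below by $e^{-\pi A^2/\lambda}e^{-\pi\lambda/A^2}$. That produces exactly the two Gaussian factors in the statement.

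The key computation is $V_gg(a,b)$ for $|a|\le A$. Because $8A<\lambda$, the sets $[-A,A]$ and $a+[-A,A]$ do not wrap around modulo $\lambda$. Hence $g(q)g(q-a)$ is the indicator of an integer interval $I_a$ of length $L_a=2A+1-|a|$, and $A+1\le L_a\le 2A+1$. Then $V_gg(a,b)$ is a geometric sum, and
\[
|V_gg(a,b)|=\Big|\frac{\sin(\pi bL_a/\lambda)}{\sin(\pi b/\lambda)}\Big| .
\]
For $|b|\le B$ we have $\pi|b|L_a/\lambda\le\pi/2$. Combining $\sin x\ge 2x/\pi$ on $[0,\pi/2]$ with $\sin y\le y$ gives
\[
|V_gg(a,b)|\ge \frac{2}{\pi}L_a\ge \frac{2(A+1)}{\pi}.
\]
(The case $b=0$ is trivial, since then $V_gg(a,0)=L_a$.)

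It then remains to count terms. The box contributes at least
\[
(2A+1)(2B+1)\Big(\frac{2(A+1)}{\pi}\Big)^2
\]
times the Gaussian factor. I would then check that $(2A+1)(2B+1)(A+1)^2\ge A^2(\lambda+2A+1)$.

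I expect this last bookkeeping to be the only delicate point. The floor costs up to $2(2A+1)$ in $(2A+1)(2B+1)\ge \lambda-(2A+1)$, whereas the target asks for $\lambda+2A+1$. The slack has to come from replacing $A^2$ by $(A+1)^2$, which gains a factor of roughly $1+2/A$, together with $\lambda>8A$. If that is too tight for small $A$, I would instead use the exact overlap lengths $L_a=2A+1-|a|$ rather than their minimum $A+1$. Summing $\sum_{|a|\le A}L_a^2$ is substantially larger than $(2A+1)(A+1)^2$, and that easily absorbs the deficit.
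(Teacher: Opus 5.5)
Your proposal is the paper's argument (same box $R_A$, same Dirichlet-kernel lower bound via $|\sin t|\ge \frac{2}{\pi}|t|$, same Gaussian factor), and you are right to worry about the floor: the paper simply asserts $|R_A|=(2A+1)\bigl(1+\frac{\lambda}{2A+1}\bigr)$, which is only an upper bound for the true count $(2A+1)(2B+1)$. However, your first check $(2A+1)(2B+1)(A+1)^2\ge A^2(\lambda+2A+1)$ fails for large rather than small $A$: it needs roughly $\lambda\ge 2A^2$, and e.g.\ $\lambda=110$, $A=10$, $B=2$ give $12705<13100$, which is exactly the regime $A=\lfloor\sqrt\lambda\rfloor$ used for sharpness. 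So your fallback is mandatory, and it does work, since $\sum_{|a|\le A}(2A+1-|a|)^2=\frac{(2A+1)(7A^2+7A+3)}{3}$ together with $(2A+1)(2B+1)\ge\lambda-2A$ reduces the claim to $4\lambda\ge 20A+3$, which follows from $\lambda>8A$.
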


\begin{remark}
The idea behind Lemma \ref{lem_Vgg} is to take advantage of  
the concentration of  the weight 
$e^{-\frac{\pi}{2}\frac{a^2+b^2}{\lambda}}$
 near the origin in \((\mathbb Z/\lambda\mathbb Z)^2\).
 Indeed, an important obstruction to making $V_g g$ small is
 the Moyal formula, which reads for any $\gamma,\eta$ 
\[
\sum_{a,b\in\mathbb Z/\lambda\mathbb Z}
|V_\gamma\eta(a,b)|^2
=
\lambda \|\gamma\|_{\ell^2}^2\|\eta\|_{\ell^2}^2 .
\]
The indicatrix
\(g=1_{[-A,A]}\) is a simple way of pushing a substantial part of the mass of $V_g g$
away from the region where the weight is large: it localises \(V_g g\) in
the translation variable \(a\in [-A,A]\), while forcing it to spread in the frequency
variable \(b\), on a scale of order \(\lambda/A\).
	\end{remark}

\begin{proof}[Proof of Lemma \ref{lem_Vgg}]
We compute readily $\|g\|_{\ell^2(\mathbb Z/\lambda\mathbb Z)}^2 = 2A+1$ and
\[
V_g g(a,b)
=
\sum_{q\in [-A,A]\cap (a+[-A,A])} e^{-2\pi i bq/\lambda},
\qquad -\frac \lambda 2 \leq a,b \leq \frac \lambda 2.
\]

Since \(8A<\lambda\), one can  check that 
\begin{itemize}
	\item if $|a|>2A$, 
	the interval $[-A,A]\cap (a+[-A,A])$ is empty, 
	so $V_g g(a,b)=0$,
	\item if \(|a|\le 2A\), 
	we may view $[-A,A]\cap (a+[-A,A])$  as a non-empty interval of $\bZ / \lambda \bZ$ which has  length $2A+1 -|a|$, so 
	\[
|V_g g(a,b)|
=
\left|
\sum_{j=0}^{2A -|a|} e^{-2\pi i bj/\lambda}
\right|
=
\left|
\frac{\sin\frac{\pi b(2A+1 -|a|)}\lambda}{\sin \frac {\pi b}\lambda}
\right|
\]
if $0<|b|\leq \frac \lambda 2$, while if $b=0$, 
\[
|V_g g(a,0)|
=
2A-|a|+1.
\]
\end{itemize} 

We consider the rectangle
\[
R_{A}:=\left \{(a,b)\in \bZ^2 \colon |a|\leq A, \ |b|\leq \frac{\lambda}{2(2A+1)}\right \}
\] 
 as a subset of $(\bZ/\lambda\bZ)^2.$
Using the well-known bounds
\[
\forall t\in [-\frac \pi 2,\frac \pi 2],\quad 
|\sin t|\ge \frac{2}{\pi}|t|
\qquad\text{and}\qquad 
\forall t\in \bR,\quad 
0\leq |\sin t|\le |t|,
\]
we obtain 
\begin{equation} \label{boundbelow} 
|V_g g(a,b)|
\ge
\left|\frac{\frac{2}{\pi}\frac{\pi b(2A+1 -|a|)}\lambda}{\frac {\pi b}\lambda}\right|
=
\frac{2}{\pi}(2A +1 -|a|)
\ge
\frac{2A}{\pi},
\end{equation} 
for  any $a,b\in 
R_{A}$
with $b\neq 0$. This estimate also holds for $b=0$ since $|V_g g(a,0)|
\geq A+1$ for $|a|\leq A$.

\smallskip 
We observe that for any $(a,b)\in R_{A}$, 
\[
\left|\cL_0\left(\pi\frac{a^2+b^2}\lambda\right)\right|^2=e^{-\pi\frac{a^2+b^2}\lambda} \geq e^{-\pi \frac {A^2}\lambda  }{e^{-\pi \frac \lambda   {A^2}}},
\]
consequently, 
\begin{align*}
\sum_{a,b\in\mathbb Z}
\left|
V_g g(a,b)\,\mathcal L_0\!\left(\pi\frac{a^2+b^2}{\lambda}\right)
\right|^2
&\ge 
\sum_{(a,b)\in R_{A}}
\left|
V_g g(a,b)\,\mathcal L_0\!\left(\pi\frac{a^2+b^2}{\lambda}\right)
\right|^2
\\
&\geq  e^{-\pi \frac {A^2}\lambda  }{e^{-\pi \frac \lambda   {A^2}}}
\sum_{(a,b)\in R_{A}}
\left|
V_g g(a,b)
\right|^2
\\
&\geq  e^{-\pi \frac {A^2}\lambda  }{e^{-\pi \frac \lambda   {A^2}}}
\left(\frac{2A}{\pi}\right)^2 |R_{A}|,
\end{align*}
by \eqref{boundbelow}, where $|R_{A}|=
(2A+1) (1+\frac \lambda {2A+1})$ denotes the cardinal of $R_{A}$.
This concludes the proof of the lemma.
\end{proof}

\subsubsection{Proof of  the  sharpness for $\mu^{1/2}$.}
Consider $\gamma=g$ given in Lemma \ref{lem_Vgg} with  $A$ equal to the integer part of $\sqrt \lambda$; hence  $A\asymp\sqrt\lambda$ for $\lambda$ large enough. 
We have 
by Theorem  \ref{thm_keyprop} 
\begin{align*}
\|\sum_{q\in \bZ/\lambda\bZ} \gamma_q h_{\lambda,q,0}
	\|_{L^4(M)}^4
	&=	
	\sum_{a,b\in \bZ} \left|
	V_\gamma \gamma (a,b)
	 \cL_0 \left (\pi \frac{a^2+b^2}\lambda\right)\right|^2\\
	 & \gtrsim A^2 \lambda \asymp \lambda^2,	
\end{align*}
	while 
	\[
	\|\sum_{q\in \bZ/\lambda\bZ} \gamma_q h_{\lambda,q,0}
	\|_{L^2(M)}^2 = \|\gamma\|_{\ell^2(\bZ/\lambda\bZ)} ^2 =2A+1 \asymp \lambda^{1/2}.
	\]
Therefore, 
\[
	\frac{\|\sum_{q\in \bZ/\lambda\bZ} \gamma_q h_{\lambda,q,0}
	\|_{L^4(M)}}
	{\|\sum_{q\in \bZ/\lambda\bZ} \gamma_q h_{\lambda,q,0}
	\|_{L^2(M)}}
	 \gtrsim 
	 \frac{\lambda ^{1/2}}{\lambda^{1/4}} =\lambda^{1/4}.
	\]		
	Here, we have $2\pi \mu^2 = \lambda$, so $\lambda^{1/4}\asymp \mu^{1/2}.$
This shows Proposition \ref{prop:sharpness}.	

\appendix

\section{Hermite and Laguerre functions}
\label{sec_hermitefcn}
\subsection{Hermite functions} 
This appendix collects some useful facts about Hermite functions. These functions arise naturally as the eigenfunctions of the harmonic oscillator and therefore play a central role in quantum mechanics. We begin by recalling that the Hermite polynomials $H_\ell(u)$ are defined on the real line by
\[
H_\ell(u)
=
(-1)^\ell e^{u^2}
\left(\frac{d}{du}\right)^\ell e^{-u^2},
\qquad \ell\in \bN_0 .
\]
They satisfy the orthogonality relations
$$
\int_{-\infty}^{+\infty} H_\ell (u) H_m(u) e^{-u^2} du
= \left\{\begin{array}{ll}
0&\mbox{if} \ \ell\neq m,\\
\sqrt \pi 2^\ell \ell! & \mbox{if} \ \ell =m,
\end{array}\right.
$$
and the recurrence relations (with the convention that $H_\ell=0$ for $\ell<0$)
$$
H_{\ell+1}(u)=2uH_\ell (u)-H'_\ell (u)\qquad\mbox{and}\quad
H'_\ell(u)=2\ell H_{\ell-1}(u).
$$
We define the associated Hermite functions
\begin{equation}  \label{hermite}  
h_\ell (u) := (\sqrt \pi 2^\ell \ell!) ^{-\frac 12} H_\ell(u) e^{-\frac {u^2}2 }, \qquad \ell\in \bN_0, \ u\in \bR,
\end{equation}  
which are Schwartz functions on $\bR$, and for which  the above recurrence relations imply (again with the conventions  
$
h_\ell=0$ for $\ell<0
$)
\begin{equation}  \label{CA}
\left(- \frac d{du} + u
\right)h_\ell = \sqrt{2 (\ell+1)} h_{\ell+1}
\quad\mbox{and}\quad
\left(\frac d{du} +  u
\right)h_\ell = \sqrt{2\ell} h_{\ell-1}.
\end{equation}
Furthermore, as $h_0 (u):= (\pi) ^{-\frac 14} e^{-\frac {u^2}2 }$ satisfy
$$ (-\frac {d^2}{du^2} + u^2) h_0 (u)=h_0 (u), $$
we get from \eqref{CA} and an obvious induction  that, for all $\ell\in \bN_0$,  \begin{equation}  \label{osc}
(-\frac {d^2}{du^2} + u^2) h_\ell = (2\ell+1) h_\ell.
\end{equation}  
The Hermite functions are then eigenfunctions of the
harmonic oscillator 
$$
H:=(-\frac {d^2}{du^2} + u^2)
$$
  which can also be written in the form
$H= \frac 1 2 \big(CA+AC\big),$
where $\ds C=- \frac d{du} + u$ and $\ds A=\frac d{du} +  u $  are the so-called   creation
and annihilation operators in quantum mechanics.  
It is also well-known that $(h_\ell)_{\ell\in \bN_0}$ is a Hilbert basis of $L^2(\bR)$  and that  the spectrum of   the harmonic oscillator
 is $\{2\ell+1 \colon \ell \in \bN_0\}$, each eigenvalue having multiplicity one;  see for instance \cite{Lewin, RS, RudinFA}.

\subsection{Laguerre functions} \label{Laguerre} 
Laguerre and Hermite functions are involved in the study of the Fourier transform on the Heisenberg group,  see for instance~\cite{EMOT, Th93}  and the references therein.

Recall that $\cL_\ell$   the Laguerre function  of type 0 and degree $\ell\in \bN_0$ is defined by 
$$
\cL_\ell(v) := L_\ell (v) e^{-\frac {v}2}, \quad 
$$
where $L_\ell$ stands for  the Laguerre polynomials of type 0 and degree $\ell$, that is 
$$ L_\ell(v):= \sum^\ell_{k=0} (-1)^{k} \left(
\begin{array}{ccc}
	\ell \\ \ell-k
\end{array}	\right)\frac {v^k} {k!}.$$
In particular 
$$ 
\cL_0(v)=e^{-\frac {v}2} .$$
\smallskip 
The  following is a well-known connection between the Hermite and Laguerre functions \cite{Folland,Th93} 
\begin{equation}  \label{rel}
\cL_\ell \left (\frac {x^2+y^2}2\right) = \int_\bR e^{ix\xi} 
h_\ell (\xi+\frac y2) 
h_\ell (\xi-\frac y2) d\xi.
\end{equation}   
This readily ensures that, for all $\ell\in \bN_0$ and $v \geq 0$, 
\begin{equation}  \label{linfty}
 |\cL_\ell (v) |   \leq 1, 
\end{equation}   
It will be also useful to recall \cite{Magnus,SZ}  that, for all $\ell\in \bN_0$ and $v \geq 0$,  
\begin{equation}  \label{prop} v \cL_\ell (v)= -(\ell +1)\cL_{\ell+1} (v)+ (2\ell+ 1) \cL _\ell (v) - \ell   \cL_{\ell-1} (v).\end{equation}
Consequently, combining \eqref{linfty} and \eqref{prop}, we obtain
\begin{equation}
\label{eq_LaguerreBound}
|\cL_\ell (v) |   \leq  C \frac{2\ell+1}{v},
\end{equation}
with a constant $C>0$ independent of $v>0$ and $\ell\in \bN_0$.

\section{Nilpotent Lie groups and their nilmanifolds}
\label{sec_HAGM}

Here we recall some basic definitions and properties of Lie groups, especially nilpotent.
We refer to textbooks such as \cite{Helgason-bk} for the general theory of Lie group and to  \cite{CorwinGreenleaf} for the particular case of nilpotent Lie groups and their nilmanifolds.

\smallskip In this paper, all the Lie groups and Lie algebras are over the field of real numbers. 

\subsection{Basic facts}
\subsubsection{Lie groups}
\label{subsecLiegr}
A \emph{Lie group} $G$ is a smooth manifold equipped with a smooth group structure.
Its \emph{Lie algebra} $\fg$ is the vector space of left-invariant vector fields equipped with the commutator bracket. It also identifies with the tangent space at the origin (neutral element) equipped with the corresponding Lie bracket. Consequently $\dim G=\dim \fg<\infty$ and 
for each vector field $X\in \fg$, we can define $\exp X$ 
as the time-1 flow from the origin. This defines  the \emph{exponential map} $\exp:\fg \to G$ which is a local diffeomorphism. At least formally, 
it is also given  by the Baker-Campbell-Hausdorff formula
in terms of the adjoint representation 
$$
\ad :\fg \longrightarrow \sL(\fg),\qquad  
X\longmapsto \ad (X)(Y)=[X,Y].
$$
It also allows us to describe the identification  between an element of the $\fg$ and a left-invariant vector field:
$$
Xf(x) = \partial_{t=0} f(x\exp tX), \quad f\in C^\infty(G), \ x\in G, \ X\in \fg.
$$

A Lie algebra or group is said to be \emph{nilpotent} 
when $\ad^N=0$ for some $N\in \bN$.
In this paper, we will always assume that a nilpotent Lie group is connected and simply connected. 
In particular, the exponential map together with  a choice of basis for $\fg$ allows us to realise a nilpotent Lie group as the manifold $\bR^n$ equipped with a group law $\bR^n\times \bR^n \to \bR^n$ that is a polynomial map. 

\smallskip
A choice of basis for $\fg$ also  leads to a corresponding Lebesgue measure on  $\fg$ and the Haar measure $dx$ on the group $G$,
hence $L^p(G)\cong L^p(\bR^n)$.
This also allows us \cite{CorwinGreenleaf}
to define the spaces 
$$
C_c^\infty(G)\cong C_c^\infty(\bR^n)
\quad \mbox{and}\quad  
\cS(G) \cong \cS(\bR^n)
$$
 of test functions which are smooth and compactly supported or Schwartz, 
and the corresponding spaces of distributions 
$$
\cD'(G)\cong \cD'(\bR^n)
\quad \mbox{and}\quad 
\cS'(G)\cong \cS'(\bR^n).
$$
Note that this identification with $\bR^n$ does not usually extend to the convolution: the group convolution, i.e. the operation between  two functions on $G$ defined formally via 
$$
 (f_1*f_2)(x):=\int_G f_1(y) f_2(y^{-1}x) dy,
$$
 is   not commutative in general whereas it is a commutative operation for functions on  the abelian group $\bR^n$.

\subsubsection{Nilmanifolds}
\label{subsec_nilmanifolds}

A nilmanifold is the quotient 
$M:=\Gamma\backslash G$
of 
a  nilpotent Lie group $G$  by a discrete subgroup $\Gamma$ of $G$.
In this paper, 
we will always assume $\Gamma$ co-compact. This means that $M$ is compact. 
A concrete example is the natural discrete subgroup of the Heisenberg group  as described in the core of the paper.  
Abstract examples and characterisations of co-compact discrete subgroup of nilpotent Lie groups may be found in    \cite{CorwinGreenleaf}.

\smallskip
An element of $M$ is a class 
$\dot x := \Gamma x$
 of an element $x$ in $G$. 
 The quotient $M$ is naturally equipped with the structure of a compact smooth manifold, as well as the action of $G$ on $M$ via
\begin{equation}
	\label{actionG}
\dot x  \longmapsto  g \cdot \dot x  = \Gamma xg^{-1} = \dot x g^{-1},  \qquad g\in G.
\end{equation}

We assume that a Haar measure on $G$ is fixed. 
Then 
$M$ inherits a measure $d\dot x$ which is invariant under the action of $G$ on $M$ \cite{Helgason-bk}.
Recall that the Haar measure~$dx$ on $G$ is unique up to a constant and, once it is fixed, $d\dot x$ is the only $G$-invariant measure on~$M$ satisfying 
for any  function $f:G\to \mathbb C$, for instance continuous with compact support,
\begin{equation}
\label{eq_dxddotx}
	\int_G f(x) dx = \int_M \sum_{\gamma\in \Gamma} f(\gamma x) \ d\dot x.
\end{equation}
We denote by $\vol (M) = \int_M 1 d\dot x$  the volume of $M$.

Recall that a nilpotent Lie group  is unimodular in the sense that its Haar measures are invariant under left and right translations. 
Hence, the measure $d\dot x$ is invariant under the  action of $G$ defined by \eqref{actionG}.

\subsection{The regular representation}
\label{subsec_RG}
The group $G$  acts unitarily on~$L^2(M)$ via the right regular representation defined  by
$$
R(g)f(\dot x) = f(\dot x g), \quad f\in L^2(M), \ g\in G, \ \dot x\in M. 
$$
It
decomposes into a countable direct sum of representations $\pi$ in $\Gh$ the dual of the group~$G$  with finite multiplicity~$m(\pi)$, see \cite{Wallach}. Moreover,
the multiplicity $m(\pi)$ may in fact be described more precisely, see \cite{Richardson}.

\begin{ex}
\label{ex_Richardson_H1}
In the case of the Schr\"odinger representation $\pi_\lambda$ of the Heisenberg group $\bH_1$, 
the method in \cite{Richardson} yields
$m(\pi_\lambda)=|\lambda|,$
for each $\lambda\in \bZ\setminus\{0\}$.
\end{ex}

Denoting by $\Gamma\backslash\Gh$ the set of these representations,
this means that $L^2(M)$ decomposes into closed $R(G)$-invariant vector subspaces:
\begin{equation}
	\label{eq_L2Mdec}
	L^2(M) = \oplus^\perp_{\pi  \in \Gamma\backslash\Gh} L^2_\pi(M), 
\end{equation}
and on each $L^2_\pi(M)$, the representation $R$ is unitarily equivalent to $m(\pi)$ copies of $\pi$:
$$
L^2_\pi(M) \sim \cH_\pi \oplus \ldots \oplus \cH_\pi =m(\pi)\cH_\pi.
$$
In the case of the standard Heisenberg nilmanifold, the decomposition in \eqref{eq_L2Mdec} is described in Section  \ref{spectral}.

Note that the regular representation induces the corresponding representations of $L^1(G)$ and of $\fg$ via
\begin{align*}
	R(\kappa) f (\dot x) &:= \int_G \kappa(y) R(y)^* f (\dot x) 
dy = 
 \int_G \kappa(y)  f (\dot x y^{-1}) 
dy, \qquad \kappa\in L^1(G),\\
R(X)&:= \partial_{t=0} R (\exp tX) , \qquad X\in \fg.
\end{align*}
The regular representation of $\fg$ will be further described in \eqref{eq_RX=XM} below. 

 \subsection{Functions and operators  on $G$ and $M$}

We say that a function $f:G\rightarrow \mathbb C$  is  $\Gamma$-left-periodic or just~$\Gamma$-periodic  when we have 
$$
\forall x\in G,\;\;\forall \gamma\in \Gamma ,\;\; f(\gamma x)=f(x).
$$
This definition extends readily to measurable functions and to distributions.  

\smallskip 
There is a natural one-to-one correspondence between the functions on $G$ which are $\Gamma$-periodic and the functions on $M$.
Indeed, for any map $F$ on $M$, 
the corresponding periodic function on $G$ is $F_G$ defined via
$$	 
F_G(x) := F(\dot x), \quad x\in G,
$$
while if $f$ is a $\Gamma$-periodic function on $G$, 
it defines a function $f_M$ on $M$ via
$$
f_M(\dot x) =f(x), \qquad x\in G.
$$
Naturally, $(F_G)_M=F$ and $(f_M)_G=f$.

We also  define, at least formally, the periodisation $\phi^\Gamma$ of a function $\phi$ by: 
$$
\phi^\Gamma(x) = \sum_{\gamma \in \Gamma } \phi(\gamma x), \qquad x\in G.
$$

If $E$ is a space of functions or of distributions on $G$, then we denote by $E^\Gamma$ the space of elements in $E$ which are  $\Gamma$-periodic. 
Although 
$C_c^\infty(G)^\Gamma = \{0\} = \cS(G)^\Gamma,$
many other periodised functions or functional spaces have interesting descriptions on $M$:
\begin{lemma}[\cite{Fischer2023}]
\label{lem_periodisationGM}
\begin{enumerate}
\item
The map $\phi \mapsto \phi^\Gamma$ yields a surjective morphism of topological vector spaces from
$\cS(G)$ onto $C^\infty(G)^\Gamma$
and from
$C_c^\infty(G)$ onto $C^\infty(G)^\Gamma$.
\item  The map $F\mapsto F_G$ yields an isomorphism of topological vector spaces
from $C^\infty(M)$ onto $C^\infty(G)^\Gamma$ and 
from $\cD'(M)$ onto $\cS'(G)^\Gamma=\cD'(G)^\Gamma$ with inverse $f\mapsto f_M$.
\item 
For every $p\in [1,\infty]$,
the map $F\mapsto F_G$ is an isomorphism of the topological vector spaces (in fact Banach spaces) from $L^p(M)$ onto $L^p_{loc}(G)^\Gamma$ with inverse $f\mapsto f_M$.
\end{enumerate}
\end{lemma}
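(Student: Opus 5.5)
The plan is to build everything from two ingredients. The first is a smooth cut-off $\psi\in C_c^\infty(G)$ with $\psi^\Gamma\equiv 1$. The second is a counting estimate for the lattice $\Gamma$, which controls the convergence of periodisations.

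\emph{Step 0: preliminaries.} Since $\Gamma$ is co-compact, there is a compact set $K_0\subset G$ with $\Gamma K_0=G$. I pick $\chi\in C_c^\infty(G)$ with $\chi\ge 0$ and $\chi>0$ on a neighbourhood of $K_0$. Then $\chi^\Gamma$ is a locally finite sum, smooth, $\Gamma$-periodic and strictly positive, so $\psi:=\chi/\chi^\Gamma$ belongs to $C_c^\infty(G)$ and satisfies $\psi^\Gamma\equiv1$. I also need a polynomial counting bound: for any compact $K$, I expect
\[
\#\{\gamma\in\Gamma : \gamma K\cap B_R\neq\emptyset\}\lesssim_K (1+R)^{N}
\]
for some $N$. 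Here $B_R$ is a ball for a fixed norm on $G\cong\bR^n$. This should follow because the group law is polynomial, so $|\gamma x|$ and $|\gamma|$ are polynomially comparable uniformly for $x\in K$, and $\Gamma$ is a discrete subset of $\bR^n$.

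\emph{Part (1).} Let $\phi\in\cS(G)$. Every left-invariant differential operator $X^\alpha$ commutes with left translation, so $X^\alpha(\phi(\gamma\,\cdot))=(X^\alpha\phi)(\gamma\,\cdot)$. The counting bound then shows that $\sum_\gamma X^\alpha\phi(\gamma x)$ converges uniformly on compact sets. Moreover, the supremum over the compact set $K_0$, which by periodicity equals the supremum over $G$, is bounded by a Schwartz seminorm of $\phi$. Hence $\phi\mapsto\phi^\Gamma$ is continuous from $\cS(G)$ into $C^\infty(G)^\Gamma$, and a fortiori from $C_c^\infty(G)$. For surjectivity, given $f\in C^\infty(G)^\Gamma$, I set $\phi:=\psi f\in C_c^\infty(G)$. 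Then $\phi^\Gamma=f\,\psi^\Gamma=f$. The map $f\mapsto\psi f$ is a continuous right inverse, which gives the statement at the level of topological vector spaces without appealing to an open mapping theorem.

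\emph{Part (2).} The projection $G\to M$ is a covering map and a local diffeomorphism. Hence $F\in C^\infty(M)$ if and only if $F_G\in C^\infty(G)$. For the topologies, I take the seminorms $\sup_M|X_M^\alpha F|$, defined using the descended frame. These equal $\sup_{K_0}|X^\alpha F_G|=\sup_G|X^\alpha F_G|$, so both topologies are the same. For distributions, I define $\langle F_G,\phi\rangle:=\langle F,(\phi^\Gamma)_M\rangle$ for $\phi\in\cS(G)$. This is continuous by Part (1), it is $\Gamma$-invariant, and it is consistent with the case where $F$ is a function, by the integration formula \eqref{eq_dxddotx}. The inverse sends a periodic $T$ to $F(\varphi):=T(\psi\,\varphi_G)$.

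The equality $\cD'(G)^\Gamma=\cS'(G)^\Gamma$ is the delicate point. Given a periodic $T\in\cD'(G)$, I write $T=\psi^\Gamma T=\sum_\gamma(\psi T)(\gamma\,\cdot)$, a sum of translates of a single compactly supported distribution. The counting bound, together with the polynomial dependence of the group law on $\gamma$, should show that this sum defines a tempered distribution of finite order.

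\emph{Part (3).} The $L^p$ statement follows from \eqref{eq_dxddotx}. Any compact set $K$ is covered by finitely many, say $N_K$, translates $\gamma K_0$, so $\|F_G\|_{L^p(K)}^p\le N_K\|F\|_{L^p(M)}^p$. In the other direction, the restriction to a fundamental domain recovers $\|F\|_{L^p(M)}$. The case $p=\infty$ is analogous.

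I expect the main obstacle to be the counting estimate and its use in showing that periodic distributions are tempered, since that is the only place where the nilpotent structure, namely the polynomial group law, genuinely enters.
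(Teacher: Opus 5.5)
The paper gives no proof of this lemma. It is imported from Fischer's earlier work on semiclassical analysis on compact nilmanifolds, so there is no in-text argument to compare with. Judged on its own, your proof is correct and is the standard one. The cut-off $\psi\in C_c^\infty(G)$ with $\psi^\Gamma\equiv1$ gives a continuous right inverse $f\mapsto\psi f$ of the periodisation, which yields surjectivity and openness at once. The polynomial growth of $\Gamma$ gives convergence and continuity of $\phi\mapsto\phi^\Gamma$ on $\cS(G)$. Parts (2)--(3) are then transported through the descended frame and \eqref{eq_dxddotx}. For the distributional topologies, note that $F\mapsto F_G$ and its inverse are the transposes of the continuous maps $\phi\mapsto(\phi^\Gamma)_M$ and $\varphi\mapsto\psi\varphi_G$.

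The step you flag as the main obstacle is not one, because your own inverse formula already settles it. To check that $T\mapsto F$, with $F(\varphi):=T(\psi\varphi_G)$, really inverts $F\mapsto F_G$, you must show $\langle T,\psi\phi^\Gamma\rangle=\langle T,\phi\rangle$ for $\phi\in C_c^\infty(G)$. This follows by applying the $\Gamma$-invariance of $T$ to each of the finitely many nonzero terms $\psi\,\phi(\gamma\,\cdot)$ and using $\sum_{\gamma}\psi(\gamma^{-1}\cdot)=1$. Since $\phi\mapsto\langle T,\psi\phi^\Gamma\rangle$ is continuous on $\cS(G)$ by Part (1), every $T\in\cD'(G)^\Gamma$ is tempered. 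No further counting and no finite-order argument are needed; pairing your series $\sum_\gamma(\psi T)(\gamma\,\cdot)$ with $\phi$ gives exactly $\langle\psi T,\phi^\Gamma\rangle$, so it is the same argument.

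Some fine print remains. \emph{Counting:} discreteness alone does not give the polynomial bound. Use a neighbourhood $U$ of the identity whose translates $\gamma U$, $\gamma\in\Gamma$, are pairwise disjoint, together with the fact that Haar measure is Lebesgue measure in exponential coordinates. \emph{Part (3):} the bound $\|F_G\|_{L^p(K_0)}\le\|F\|_{L^p(M)}$ needs $K_0$ to be the closure of a fundamental domain with null boundary. Otherwise, insert the multiplicity $\sup_x\#\{\gamma\in\Gamma:\gamma x\in K_0\}$. \emph{Well-definedness of $f_M$:} since $\Gamma$ is countable, each $f\in L^p_{loc}(G)^\Gamma$ has an everywhere periodic representative, and this is what makes $f_M$ well defined.
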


\label{subsubsec_opGM}
A mapping $T:\cS'(G)\to \cS'(G)$
or $\cD'(G) \to \cD'(G)$ is (left-)invariant under an element $g\in G$ when  $T(f(g \, \cdot)) = (Tf)(g \, \cdot)$ for all $
f\in \cS'(G)$ (resp. $\cD'(G))$).
It is invariant under a subset of $G$ if it is invariant under every element of the subset.
For instance, (right) convolution operators $f\mapsto f*\kappa$  with $\kappa\in \cS'(G)$ are  invariant under (left-)translation under $G$.
We will be interested mainly in the following example:
\begin{ex}
Consider a linear continuous mapping $T:\cS'(G)\to \cS'(G)$
or $\cD'(G) \to \cD'(G)$ respectively
which is invariant under $\Gamma$. Then it  naturally induces
a linear continuous mapping
 $T_M:\cD'(M)\to \cD'(M)$ on $M$ given via
$$
T_M F = (TF_G)_M, \qquad F\in \cD'(M).
$$
Consequently, 
if  $T$ coincides with   a smooth differential operator 	on $G$ that is invariant under~$\Gamma$, then $T_M$ is a smooth differential operator on $M$.
\end{ex}

In particular, any left-invariant vector field $X\in \fg$ on $M$ yields a corresponding vector field $X_M$ on $G$ and we have
$$
X_M F(\dot x) = (XF_G)(x) = \partial_{t=0}F_G (x \exp tX) = \partial_{t=0} R(\exp tX )F \, (\dot x).
$$
In other words,  
\begin{equation}
	\label{eq_RX=XM}
	R(X) =X_M.
\end{equation}

\subsection{Construction of functions in  $L^2(M)$ via representations}
 
A standard method to produce functions in $L^2(M)$
 is  the following construction \cite {GGP}:
\begin{lemma}
\label{lem_phipinuh}
	Let $\pi$ be a strongly continuous unitary representation on a separable Hilbert space $\cH_\pi$ of a finite dimensional real Lie group
 $G$.
 For any smooth vector $h\in \cH_\pi^{+\infty}$ and any distributional vector $\nu\in \cH_\pi^{-\infty}$, the function defined via 
 $$
 \phi_{\pi,\nu,h}(x) := ( \nu , \pi(x) h)_{\cH_\pi^{-\infty}\times \cH_\pi^{+\infty}}, \qquad x\in G, 
 $$
 is smooth, that is,  $\phi_{\pi,\nu,h}\in C^\infty(G)$.
  Moreover, if $\nu$ is invariant under $\pi(g_0)$ for some $g_0\in G$, i.e. $\pi(g_0)\nu=\nu$, then $\phi_{\pi,\nu,h}$ is invariant under $g_0$, 
 $$
 \mbox{i.e.}\qquad \forall x\in G\qquad 
 \phi_{\pi,\nu,h} (g_0 x) = \phi_{\pi,\nu,h}(x).
 $$ 
\end{lemma}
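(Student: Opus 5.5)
We will prove Lemma \ref{lem_phipinuh} directly from the definitions of smooth and distributional vectors. Recall that $\cH_\pi^{+\infty}$ is the space of vectors $h$ for which $x\mapsto \pi(x)h$ is smooth from $G$ to $\cH_\pi$. It is a Fr\'echet space, with seminorms $\|\pi(X_{\alpha})h\|_{\cH_\pi}$ where $X_\alpha$ runs over monomials in a basis of $\fg$. The space $\cH_\pi^{-\infty}$ is its topological (antilinear) dual. The representation $\pi$ acts on $\cH_\pi^{+\infty}$ continuously, and on $\cH_\pi^{-\infty}$ by duality via $(\pi(g)\nu, h) := (\nu,\pi(g)^{-1}h)$.

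\textbf{Step 1: smoothness.} It suffices to show that the map $x\mapsto \pi(x)h$ is $C^\infty$ as a map from $G$ into the Fr\'echet space $\cH_\pi^{+\infty}$, and not merely into $\cH_\pi$. The function $\phi_{\pi,\nu,h}$ is the composition of this map with the continuous antilinear functional $\nu$, so it will then be smooth. The key identity is
\[
\partial_{t=0}\,\pi(x\exp tX)h=\pi(x)\pi(X)h .
\]
Here $\pi(X)h$ is again a smooth vector. The convergence of the difference quotients in each seminorm $\|\pi(X_\alpha)\cdot\|$ reduces to the convergence of difference quotients of $t\mapsto \pi(x\exp tX)\pi(\mathrm{Ad}(\cdot)X_\alpha)h$ in $\cH_\pi$. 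To see this, write $\pi(X_\alpha)\pi(x)=\pi(x)\pi(\mathrm{Ad}(x^{-1})X_\alpha)$, and note that $\mathrm{Ad}(x^{-1})X_\alpha$ is a finite combination of monomials with coefficients depending smoothly on $x$. Iterating this argument yields all derivatives, and gives the formula $X\phi_{\pi,\nu,h}=\phi_{\pi,\nu,\pi(X)h}$. The main technical point is this continuity of $x\mapsto\pi(x)$ on $\cH_\pi^{+\infty}$, including local uniform bounds on the seminorms. We handle it with the $\mathrm{Ad}$ identity above, together with the fact that the coefficients of $\mathrm{Ad}(x^{-1})$ are locally bounded in $x$.

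\textbf{Step 2: invariance.} For every $x\in G$,
\[
\phi_{\pi,\nu,h}(g_0x)=(\nu,\pi(g_0)\pi(x)h)=(\pi(g_0)^{-1}\nu,\pi(x)h).
\]
Since $\pi(g_0)\nu=\nu$, we also have $\pi(g_0)^{-1}\nu=\nu$, so the right-hand side equals $\phi_{\pi,\nu,h}(x)$. This step is immediate once the action on distribution vectors is defined by duality, as above.
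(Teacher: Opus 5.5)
Your proof is correct. The paper gives no argument to compare it with: the lemma is quoted as a standard construction from \cite{GGP} and used as a black box.

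What you supply is the usual self-contained proof. You show that the orbit map $x\mapsto\pi(x)h$ is smooth into the Fr\'echet space $\cH_\pi^{+\infty}$, using $\pi(X_\alpha)\pi(x)=\pi(x)\pi(\mathrm{Ad}(x^{-1})X_\alpha)$ with coefficients smooth in $x$. You then compose with the continuous functional $\nu$.

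One step is left implicit. You still have to identify the $\cH_\pi$-limit of $\pi(X_\alpha)$ applied to the difference quotient with $\pi(X_\alpha)\pi(x)\pi(X)h$. This follows from closability of $\pi(X_\alpha)$, or directly from $YX=XY-[X,Y]$ when you differentiate the $\mathrm{Ad}$-coefficients. The local uniform bounds you mention are not needed here.

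Besides being self-contained, your route yields the formula $X\phi_{\pi,\nu,h}=\phi_{\pi,\nu,\pi(X)h}$. This is exactly what underlies \eqref{action-Vf} and hence the eigenvalue computation in Theorem~\ref{thm_spdec}.

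Your explicit convention in Step~2 is also what makes the invariance argument work. You take $\cH_\pi^{-\infty}$ as the antilinear dual with $(\pi(g)\nu,h)=(\nu,\pi(g)^{-1}h)$, which extends the action on $\cH_\pi$, and this is what lets invariance under a single $g_0$ pass through the pairing. With the bilinear pairing $(\cdot,\cdot)_{\cS'(\bR)\times\cS(\bR)}$ of Appendix~\ref{sec_BWZ}, the operator crossing the pairing would instead be the transpose $\nu\mapsto\overline{\pi(g_0)^{-1}\bar\nu}$. Invariance under $\pi(g_0)$ alone would then not suffice in general. For $\pi_\lambda$ and $g_0=(a,b,c)\in\Gamma$, however, this transpose is $\pi_\lambda(-a,b,c-ab)$ with $(-a,b,c-ab)\in\Gamma$. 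So the paper's use of the lemma for $\Gamma$-invariant $\nu$ is unaffected.
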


Applying Lemma \ref{lem_phipinuh} to the case of a nilmanifold, we obtain readily:
\begin{corollary}
	\label{cor1_lem_phipinuh}
	Let $M=\Gamma\backslash G$ be a nilmanifold. 
	Let $\pi$ be a strongly continuous unitary representation of $G$ on a separable Hilbert space $\cH_\pi$.
	
	If $\nu\in \cH_\pi^{-\infty}$ is invariant under $\pi(\gamma)$, $\gamma\in \Gamma$, then 
	$(\phi_{\pi,\nu,h})_M \in C^\infty(M)$, for any $h\in \cH_\pi^{+\infty}$.
\end{corollary}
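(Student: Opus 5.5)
The plan is to reduce the statement directly to Lemma~\ref{lem_phipinuh} combined with the identification of smooth functions on $M$ with smooth $\Gamma$-periodic functions on $G$ given in Lemma~\ref{lem_periodisationGM}~(2). Fix $h\in \cH_\pi^{+\infty}$ and $\nu\in\cH_\pi^{-\infty}$ with $\pi(\gamma)\nu=\nu$ for every $\gamma\in\Gamma$. The first part of Lemma~\ref{lem_phipinuh} gives $\phi_{\pi,\nu,h}\in C^\infty(G)$. The second part, applied with $g_0=\gamma$ for each $\gamma\in\Gamma$ in turn, gives
\[
\phi_{\pi,\nu,h}(\gamma x)=\phi_{\pi,\nu,h}(x),\qquad x\in G,\ \gamma\in\Gamma .
\]
So $\phi_{\pi,\nu,h}$ is left $\Gamma$-periodic, that is, $\phi_{\pi,\nu,h}\in C^\infty(G)^\Gamma$.

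The only point I would check carefully is the direction of the invariance inside Lemma~\ref{lem_phipinuh}. Formally, the relevant identity is
\[
\phi_{\pi,\nu,h}(g_0x)=(\nu,\pi(g_0)\pi(x)h)=(\pi(g_0)^{*}\nu,\pi(x)h)=(\pi(g_0^{-1})\nu,\pi(x)h),
\]
where the adjoint is extended to distributional vectors by duality. This shows that the hypothesis actually needed is invariance of $\nu$ under $\pi(g_0^{-1})$. That causes no difficulty here: $\Gamma$ is a group, so the hypothesis for all $\gamma\in\Gamma$ includes all inverses. Hence either reading of the lemma yields $\Gamma$-periodicity.

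Finally, by the natural correspondence between $\Gamma$-periodic functions on $G$ and functions on $M$, the function $(\phi_{\pi,\nu,h})_M(\dot x):=\phi_{\pi,\nu,h}(x)$ is well defined on $M$. By Lemma~\ref{lem_periodisationGM}~(2), the map $F\mapsto F_G$ is an isomorphism from $C^\infty(M)$ onto $C^\infty(G)^\Gamma$ with inverse $f\mapsto f_M$. Therefore $(\phi_{\pi,\nu,h})_M\in C^\infty(M)$, which is the claim. The argument is essentially immediate; no step is a real obstacle beyond the bookkeeping of the invariance direction above.
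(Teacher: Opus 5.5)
Your proposal is correct and matches the paper's argument, which simply applies Lemma~\ref{lem_phipinuh} with $g_0=\gamma\in\Gamma$ to get a smooth left $\Gamma$-periodic function on $G$, and then descends it to $C^\infty(M)$ via the identification $C^\infty(M)\cong C^\infty(G)^\Gamma$ of Lemma~\ref{lem_periodisationGM}~(2). Your caveat about the direction of invariance is harmless but unnecessary, since $\pi(g_0)\nu=\nu$ is equivalent to $\pi(g_0^{-1})\nu=\nu$ (apply $\pi(g_0^{-1})$ to both sides), even for a single $g_0$.
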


\section{Carnot groups and nilmanifolds}
\label{sec_carnot}

\subsection{Carnot groups}
\label{sec_carnotgr}

A \emph{stratified} Lie group $G$  is a connected and simply connected 
Lie group 
whose Lie algebra $\fg$ 
admits an $\bN$-gradation
$\fg= \oplus_{\ell=1}^\infty \fg_{\ell}$
where the $\fg_{\ell}$, $\ell=1,2,\ldots$, 
are vector subspaces of $\fg$
and satisfying 
$[\fg_{1},\fg_{\ell}]=\fg_{1+\ell}$
for any $\ell\in \bN$.
This implies that the group $G$ is nilpotent
and that $\fg_\ell=\{0\}$ for all $\ell>s$ for some $s\in \bN$. The smallest $s$ is called the step of nilpotency, and $\fg_\ell\neq \{0\}$ for all $\ell \leq s$.
When a basis $\{X_1,\ldots X_{n_1}\}$ of $\fg_1$ is fixed, the group is said to be \emph{Carnot}. 
They are the model of sub-Riemannian manifolds \cite{ABB19}.
Examples of such groups are the Heisenberg group.

The structure of Carnot groups allows us to define the associated sub-Laplacian  
\begin{equation}
	\label{eqdef_sublap}
\cL = -(X_1^2+\ldots + X_{n_1}^2),
\end{equation}
as well as the homogeneous structure defined below. 
The sub-Laplacian $\cL$ is a smooth differential operator on $M$ that  is essentially self-adjoint on $\cS(G)\subset L^2(G)$ \cite{Folland2}.
We keep the same notation for their self-adjoint extension.

Let us now discuss the homogeneous structure of Carnot groups. 
For any $r>0$, 
we define the  linear mapping $D_r:\fg\to \fg$ by
$D_r X=r^\ell X$ for every $X\in \fg_\ell$, $\ell\in \bN$.
Then  the Lie algebra~$\fg$ is endowed 
with the family of dilations~$\{D_r, r>0\}$
and becomes a homogeneous Lie algebra in the sense of 
\cite{FollandStein}.
The associated group dilations 
still denoted by $D_r$ and obtained via the exponential mapping 
leads in a canonical way to the notions of homogeneity for functions, distributions and operators. 
As  examples, 
the sub-Laplacian $\cL$ is homogeneous of degree 2 while 
the Haar measure is $Q$-homogeneous, where
$$
Q:=\sum_{\ell\in \bN}\ell \dim \fg_\ell,
$$
 is called the homogeneous dimension of $G$.

\subsection{Sub-elliptic Bernstein inequalities}
\label{subsec_Bernstein}

A  nilmanifold $M=\Gamma \backslash G $ where $G$ is Carnot is also called Carnot.
The sub-Laplacian $\cL$ on $G$ defined in \eqref{eqdef_sublap} is invariant under left-translation. As seen in Section \ref{subsubsec_opGM}, it induces 
the sub-Laplacian  on $M$
$$
\cL_M = -(X_{1,M}^2+\ldots + X_{n_1,M}^2).
$$
This is a smooth differential operator which is essentially self-adjoint on $C^\infty(M)\subset L^2(M)$~\cite{Fischer2023};
we will keep the same notation for $\cL_M$ and its self-adjoint extension.
The spectrum  of 	$\cL_M$ is a discrete and unbounded subset of $[0,+\infty)$, 
each eigenspace of $\cL_M$ has finite dimension and 
the constant functions on $M$ form the 0-eigenspace of $\cL_M$ \cite{Fischer2023}.
A consequence of the semiclassical analysis in \cite{Fischer2023} (see also \cite{FM}) is the following statement:
\begin{proposition}[\cite{Fischer2023}]
\label{prop_nilmanifold}
 Let  $\psi\in \cS(\bR)$. 
\begin{enumerate}
\item The integral kernel of $\psi (\cL_M)$ is the smooth function $K_\psi\in C^\infty (M\times M)$ 
given by 
$$
K_\psi(\dot g,\dot h) = \sum_{\gamma\in \Gamma} \kappa_\psi(g^{-1}\gamma h), \quad g,h\in G.
$$
\item Set $\psi_\eps  := \psi (\eps^2 \cdot)$.
We have:
$$
K_{\psi_\eps}(\dot g,\dot g)
= \eps^{-Q}\kappa_\psi (0) +O(\eps)^\infty,
$$
in the sense that 
$$
\forall N\in \bN,  \quad \exists C>0, \quad \forall \dot g\in M, \, \forall \eps\in (0,1],\quad 
|K_{\psi_\eps}(\dot g,\dot g)
- \eps^{-Q}\kappa_\psi (0) |\leq C \eps^N.
$$
\end{enumerate}
\end{proposition}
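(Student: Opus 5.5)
The plan is to realise $\psi(\cL_M)$ as the descent to $M$ of the convolution operator $\psi(\cL)$ on $G$. Part (1) then follows by unfolding the integral over $G$ into an integral over $M$ with a $\Gamma$-sum. Part (2) follows from homogeneity of the convolution kernel together with discreteness of $\Gamma$.

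\emph{Step 1 (kernel on $G$).} By Hulanicki's theorem, for $\psi\in\cS(\bR)$ the operator $\psi(\cL)$ on $L^2(G)$ is the right convolution $f\mapsto f*\kappa_\psi$ with $\kappa_\psi\in\cS(G)$. Since $\psi$ is real and $\cL$ is self-adjoint, $\kappa_\psi(x^{-1})=\overline{\kappa_\psi(x)}$; this lets me match the kernel convention $\kappa_\psi(g^{-1}\gamma h)$ in the statement.

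Right convolution commutes with left translations, so it is $\Gamma$-invariant and descends to an operator $T_M$ on $\cD'(M)$, as in Section \ref{subsubsec_opGM}.

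\emph{Step 2 ($T_M=\psi(\cL_M)$ and the kernel formula).}
\begin{itemize}
\item I check $T_M=\psi(\cL_M)$ through the decomposition \eqref{eq_L2Mdec}. On each $L^2_\pi(M)\cong m(\pi)\cH_\pi$, the operator $T_M$ acts as $R(\kappa_\psi)$, which corresponds to $\pi(\kappa_\psi)=\psi(\pi(\cL))$. By \eqref{eq_RX=XM}, $\cL_M$ acts there as $\pi(\cL)$, so $T_M$ agrees with the functional calculus of $\cL_M$ summand by summand.
\item For $F\in C^\infty(M)$, I unfold with \eqref{eq_dxddotx}:
\[
F_G*\kappa_\psi(g)=\int_G F_G(y)\kappa_\psi(y^{-1}g)\,dy=\int_M F(\dot y)\sum_{\gamma\in\Gamma}\kappa_\psi(y^{-1}\gamma g)\,d\dot y .
\]
Here I reindex $\gamma\mapsto\gamma^{-1}$ and use the conjugate symmetry of $\kappa_\psi$ to reach the stated form.
\item The series and all its derivatives converge locally uniformly. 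This uses that $\kappa_\psi$ and its derivatives are Schwartz, that $\Gamma$ is discrete and co-compact, and that $\#\{\gamma\in\Gamma: |\gamma|\le R\}$ grows polynomially in $R$ for a homogeneous quasi-norm $|\cdot|$. Hence $K_\psi\in C^\infty(M\times M)$.
\end{itemize}

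\emph{Step 3 (diagonal asymptotics).} Since $\cL$ is homogeneous of degree $2$ and the Haar measure is $Q$-homogeneous, $\kappa_{\psi_\eps}(x)=\eps^{-Q}\kappa_\psi(D_{\eps^{-1}}x)$. Therefore
\[
K_{\psi_\eps}(\dot g,\dot g)=\eps^{-Q}\kappa_\psi(0)+\eps^{-Q}\sum_{\gamma\neq e}\kappa_\psi\big(D_{\eps^{-1}}(g^{-1}\gamma g)\big).
\]
I take $g$ in a compact fundamental domain $\Omega$. Then there are constants $c,C>0$ such that for all $g\in\Omega$ and $\gamma\ne e$,
\[
|g^{-1}\gamma g|\ge c \qquad\text{and}\qquad |g^{-1}\gamma g|\ge C^{-1}|\gamma|-C .
\]
The first bound comes from discreteness and compactness, and the second from the quasi-triangle inequality. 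Schwartz decay gives $|\kappa_\psi(D_{\eps^{-1}}x)|\le C_N(1+|x|/\eps)^{-N}$. Combined with the polynomial lattice count, the remainder sum is $O(\eps^{N-Q})$ uniformly in $\dot g$, for every $N$.

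\emph{Main obstacle.} I expect the hardest step to be making the lower bound on $|g^{-1}\gamma g|$ and the counting estimate uniform over $\Omega$. I would first try to get both from continuity of conjugation on the compact set $\Omega$ together with the standard polynomial-growth property of nilpotent lattices.
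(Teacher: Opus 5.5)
The paper gives no proof of this proposition; it quotes it from \cite{Fischer2023}. Your argument is the standard proof behind that reference and it is correct: Hulanicki's Schwartz kernel $\kappa_\psi$, the identification of the descended operator $R(\kappa_\psi)$ with $\psi(\cL_M)$, unfolding via \eqref{eq_dxddotx}, then the scaling $\kappa_{\psi_\eps}=\eps^{-Q}\kappa_\psi\circ D_{\eps^{-1}}$ combined with the uniform bound $|g^{-1}\gamma g|\gtrsim 1+|\gamma|$ ($\gamma\neq e$, $g$ in a compact fundamental domain) and the polynomial growth of $\Gamma$.

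There is one slip, in matching the kernel convention. Unfolding gives the kernel $\sum_{\gamma}\kappa_\psi(h^{-1}\gamma g)$, and conjugate symmetry alone only turns this into $\overline{\sum_{\gamma}\kappa_\psi(g^{-1}\gamma h)}$. What you actually need is $\kappa_\psi(x^{-1})=\kappa_\psi(x)$. This holds because $\cL$ has real coefficients, so $\kappa_\psi$ is real for real $\psi$. It then extends by linearity to complex-valued $\psi\in\cS(\bR)$, so you do not need to assume $\psi$ is real.
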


Proposition \ref{prop_nilmanifold} readily implies:
\begin{corollary}
\label{cor_prop_nilmanifold}
For any  $p\in [2,\infty]$and any $\psi\in \cS(\bR)$, there exists $C=C_{p,\psi}>0$ such that the following estimate holds  for any $\eps\in (0,1]$, 
$$
\|\psi (\eps^2\cL_M) \|_{\sL(L^2(M),L^p(M))}
\leq C \eps ^{Q(\frac 1p -\frac 12)}.
$$
\end{corollary}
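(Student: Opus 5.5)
We deduce Corollary~\ref{cor_prop_nilmanifold} from the on-diagonal kernel asymptotics in Proposition~\ref{prop_nilmanifold}. The plan is to prove the case $p=\infty$ by a $TT^*$ argument and then interpolate with the trivial case $p=2$.

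\textbf{The case $p=\infty$.} Set $\phi:=|\psi|^2\in\cS(\bR)$. Since $\cL_M$ is self-adjoint, the functional calculus gives $\psi(\eps^2\cL_M)\psi(\eps^2\cL_M)^*=\phi(\eps^2\cL_M)$. This operator has the smooth kernel $K_{\phi_\eps}$ by Part (1) of Proposition~\ref{prop_nilmanifold}. For $f\in L^2(M)$ and $\dot g\in M$, the kernel of $\psi(\eps^2\cL_M)$ is smooth, so
$$
\psi(\eps^2\cL_M)f(\dot g)=\big(f,\overline{k_{\dot g}}\big)_{L^2(M)},
$$
where $k_{\dot g}$ denotes the kernel of $\psi(\eps^2\cL_M)$ evaluated at the point $\dot g$ in its first variable. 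Computing $\|k_{\dot g}\|_{L^2(M)}^2$ via $TT^*$ identifies it with $K_{\phi_\eps}(\dot g,\dot g)$. Cauchy--Schwarz then gives
$$
|\psi(\eps^2\cL_M)f(\dot g)|\le K_{\phi_\eps}(\dot g,\dot g)^{1/2}\|f\|_{L^2(M)}.
$$
Part (2) of Proposition~\ref{prop_nilmanifold}, applied to $\phi$ with $N=0$, shows that $K_{\phi_\eps}(\dot g,\dot g)\le \eps^{-Q}\kappa_\phi(0)+C\le C'\eps^{-Q}$ uniformly in $\dot g$ and $\eps\in(0,1]$. Hence $\|\psi(\eps^2\cL_M)\|_{\sL(L^2,L^\infty)}\le C\eps^{-Q/2}$.

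\textbf{The case $p=2$ and interpolation.} For $p=2$, the bound $\|\psi(\eps^2\cL_M)\|_{\sL(L^2,L^2)}\le\|\psi\|_\infty$ is immediate from the spectral theorem. For $2<p<\infty$, since $M$ has finite measure, Hölder's inequality gives
$$
\|u\|_{L^p}\le\|u\|_{L^2}^{2/p}\,\|u\|_{L^\infty}^{1-2/p}.
$$
Applied to $u=\psi(\eps^2\cL_M)f$ and combined with the two endpoint bounds, this yields the rate
$$
\eps^{-\frac Q2(1-\frac 2p)}=\eps^{Q(\frac1p-\frac12)},
$$
as claimed.

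\textbf{Main obstacle.} The only delicate point is identifying $\|k_{\dot g}\|_{L^2(M)}^2$ with the diagonal value $K_{\phi_\eps}(\dot g,\dot g)$. I would justify this by composing kernels: $\psi(\eps^2\cL_M)^*$ has kernel $\overline{K_{\psi_\eps}(\dot h,\dot g)}$, and the composition $\psi(\eps^2\cL_M)\psi(\eps^2\cL_M)^*$ evaluated at $(\dot g,\dot g)$ is exactly $\int_M|K_{\psi_\eps}(\dot g,\dot h)|^2\,d\dot h$. This integral converges because the kernels are smooth on the compact manifold $M\times M$. Everything else is direct.
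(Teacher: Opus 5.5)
Your proposal is correct and essentially follows the paper's proof: the $p=\infty$ bound comes from Cauchy--Schwarz, the identity $\|K_{\psi_\eps}(\dot x,\cdot)\|_{L^2(M)}^2=K_{|\psi_\eps|^2}(\dot x,\dot x)$ and Part (2) of Proposition~\ref{prop_nilmanifold}, followed by the trivial $p=2$ bound and interpolation. The differences are cosmetic: the paper checks that identity by expanding in an orthonormal eigenbasis rather than by composing kernels, and your H\"older step is an explicit form of the interpolation, which in fact does not need $M$ to have finite measure.
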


Before giving the proof of Corollary \ref{cor_prop_nilmanifold}, let us explain why its statement can be interpreted as the analogous of Bernstein inequality in the Euclidean framework (for a brief review on this topic, one can consult \cite{bahouri lp} and the references therein).

\smallskip
We  fix an orthonormal basis $e_\ell$, $\ell\in \bN_0$,  of $L^2(M)$ such that:
$$
\cL_M e_\ell = \lambda_\ell^2 e_\ell.
$$
Then any function $f\in L^2(M)$ may be written as a Hilbertian series 
\begin{equation}
	\label{eq_Hilbertseries}
	f = \sum_{\ell=0}^\infty (f,e_\ell)_{L^2(M)} e_\ell 
\quad\mbox{in}\ L^2(M).
\end{equation}
We say that $f$ is spectrally supported in $I\subset \bR$ when the sum above is over $\ell$ such that~$\lambda_\ell \in I$;
this is independent of the chosen basis $(e_\ell)$.
 In particular, a function $f\in L^2(M)$ is said to be spectrally supported in~$[0,\Lambda]$,~$\Lambda>0$, if and only if, for all~$\phi \in L^\infty(\bR)$ identically equal to $1$ in~$ [0,1]$, we have \begin{equation}
\label{defspesup} f= \phi(\Lambda^{-2} \cL_M)f. \end{equation} 
In this case, applying Corollary \ref{cor_prop_nilmanifold} with~$\eps = \Lambda^{- 1}$ and $\phi=\psi\in C_c^\infty(\bR)$ such that $\psi=1$ on~$[0,1]$ and $\psi=0$ outside $[-1,2]$, 
we have $\psi(\Lambda^{-2} \cL_M) f = f$ so 
$$
\|f\|_{L^p(M)}  \leq C \Lambda^{Q (\frac 12 -\frac 1p)}
\|f\|_{L^2(M)} .
$$

\begin{proof}[Proof of Corollary \ref{cor_prop_nilmanifold}]
For any bounded function $\psi:\bR\to\bR$, 
the operator $\psi(\cL_M)$ applied to~$f\in L^2(M)$ decomposed in the  Hilbertian series in \eqref{eq_Hilbertseries} as$$
\psi(\cL_M)f = \sum_{\ell=0}^\infty \psi(\lambda_\ell^2) (f,e_\ell)_{L^2(M)} e_\ell .
$$ 
We deduce that $\psi(\cL_M)$ is bounded on $L^2(M)$ with
\begin{equation}
	\|\psi (\cL_M)\|_{\sL(L^2(M))}
\leq \sup_{\lambda\in \bR} |\psi (\lambda^2)|,	
\label{eq_FCDelta}
\end{equation}
and that
 its integral kernel $K_\psi$, that is, the distribution defined via
\begin{equation}
\psi(\cL_M)f (\dot x) = \int_M K_\psi(\dot x,\dot y) f(\dot y) d\dot y,\quad f\in C^\infty (M),
\label{eq_kernelM}
\end{equation}
by Proposition \ref{prop_nilmanifold} (1),
 is formally given by 
\begin{equation}
	\label{eq_Kpsi}
	K_\psi (\dot x,\dot y) = \sum_{\ell=0}^\infty  \psi(\lambda_\ell^2)
\overline{e_\ell (\dot x)} e_\ell(\dot y).
\end{equation}
Applying  the Cauchy-Schwartz inequality in \eqref{eq_kernelM} implies readily 
$$
\|\psi_\eps  (\cL_M) \|_{\sL(L^2(M),L^\infty(M))} 
\leq   \sup_{\dot x\in M} \|K_{\psi_\eps }(\dot x, \cdot)\|_{L^2(M)}.
$$
Now by \eqref{eq_Kpsi}, we see
\begin{align*}
\|K_{\psi_\eps }(\dot x, \cdot)\|_{L^2(M)}^2
&=
\int_M 
\sum_{\ell,\ell'}  \psi_\eps (\lambda_\ell^2)\overline{\psi_\eps (\lambda_{\ell'}^2)}\
\overline{e_\ell (\dot y)} e_\ell(\dot x)\
e_{\ell'} (\dot y) \overline{e_{\ell'}(\dot x)} d\dot y 	
\\
&=\sum_{\ell}|\psi_\eps| ^2(\lambda_\ell^2)
e_\ell(\dot x)
 \overline{e_\ell(\dot x)},
\end{align*}
since 
$\int_M \overline{e_\ell (\dot y)} e_{\ell'} (\dot y) d\dot y 	
=\delta_{\ell,\ell'}$ by the Plancherel formula.
We recognise 
$$
\|K_{\psi_\eps}(\dot x, \cdot)\|_{L^2(M)}^2 = |K_{|\psi_\eps|^2}(\dot x, \dot x)|.
$$
Hence, 
$$
\|\psi_\eps  (\cL_M) \|_{\sL(L^2(M),L^\infty(M))}
\leq   \sqrt{\sup_{\dot x \in M} 
|K_{|\psi_\eps|^2}(\dot x, \dot x)|}.
$$ 
We deduce that 
 \begin{align*}
 \|\psi_\eps  (\cL_M) \|_{\sL(L^2(M),L^\infty(M))}
& \leq C \eps^{-\frac Q2},	
 \end{align*}
by Proposition \ref{prop_nilmanifold} (2)
for some positive constant $C$.
This shows the case $p=+\infty$.
The case~$p=2$ follows from \eqref{eq_FCDelta} applied to $\psi_\eps$, with $C = \sup_\bR |\psi| =\sup_\bR |\psi_\eps|  $.
By interpolation, we obtain the case of any $p\in (2,\infty)$.
\end{proof}

The Bernstein inequality now follows readily:

\begin{theorem}[Sub-elliptic Bernstein Inequality]
\label{thm_Bernstein}
Let $M=\Gamma \backslash G$ be a compact nilmanifold on a Carnot group $G$. 	
For any $p\in [2,\infty]$
there exists a  constant $C>0$ such that the following estimate holds for any $\mu>1$:
$$
\|\Pi_{\mu,\sqrt{\cL_M}}\|_{\sL(L^p(M), L^2(M))} \leq C  \mu^{Q(\frac 12 -\frac 1p)}.
$$	
\end{theorem}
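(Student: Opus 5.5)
The idea is to write the spectral projector as a smooth spectral cutoff at scale $\eps=\mu^{-1}$, and then let Corollary \ref{cor_prop_nilmanifold} do all the work. I fix once and for all $\psi\in C_c^\infty(\bR)$ with $\psi=1$ on $[0,1]$ and $\psi=0$ outside $[-1,2]$.

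\textbf{Step 1 (factorisation).} The operator $\Pi_{\mu,\sqrt{\cL_M}}$ is the orthogonal projection onto the eigenspace of $\cL_M$ with eigenvalue $\mu^2$. Using the Hilbertian expansion \eqref{eq_Hilbertseries} in an orthonormal eigenbasis, the only terms kept by $\Pi_{\mu,\sqrt{\cL_M}}$ have $\lambda_\ell^2=\mu^2$. For those terms $\psi(\mu^{-2}\lambda_\ell^2)=\psi(1)=1$. Hence
\[
\Pi_{\mu,\sqrt{\cL_M}} = \psi(\mu^{-2}\cL_M)\,\Pi_{\mu,\sqrt{\cL_M}}.
\]
This is exactly the statement that $\Pi_{\mu,\sqrt{\cL_M}}f$ is spectrally supported in $[0,\mu]$, in the sense of \eqref{defspesup}.

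\textbf{Step 2 ($L^2\to L^p$ bound).} Since $\mu>1$, the scale $\eps:=\mu^{-1}$ lies in $(0,1]$. Corollary \ref{cor_prop_nilmanifold} applied with this $\eps$ gives
\[
\|\Pi_{\mu,\sqrt{\cL_M}} f\|_{L^p(M)} \le C_{p,\psi}\,\mu^{Q(\frac12-\frac1p)}\|\Pi_{\mu,\sqrt{\cL_M}} f\|_{L^2(M)} \le C_{p,\psi}\,\mu^{Q(\frac12-\frac1p)}\|f\|_{L^2(M)}.
\]
The last inequality uses that an orthogonal projection has norm at most one on $L^2(M)$. This proves the $\sL(L^2(M),L^p(M))$ estimate. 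The constant $C$ depends only on $p$ and the fixed $\psi$.

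\textbf{Step 3 (passing to the stated norm).} $\Pi_{\mu,\sqrt{\cL_M}}$ is self-adjoint, so by duality its $\sL(L^{p'}(M),L^2(M))$ norm equals its $\sL(L^2(M),L^p(M))$ norm, where $p'$ is the conjugate exponent. To reach the $\sL(L^p(M),L^2(M))$ norm as literally written, I use that $M$ is compact and $p\ge 2\ge p'$. By Hölder, $\|f\|_{L^{p'}(M)}\le \vol(M)^{\frac1{p'}-\frac1p}\|f\|_{L^p(M)}$. Therefore the $\sL(L^p,L^2)$ norm is at most a fixed constant times the $\sL(L^{p'},L^2)$ norm, and the bound $C\mu^{Q(\frac12-\frac1p)}$ holds under either reading of the statement.

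\textbf{Expected obstacle.} There is no genuine obstacle at this level. The substantive input is the on-diagonal heat-type kernel asymptotics of Proposition \ref{prop_nilmanifold}, which is assumed here, and it is what brings in the homogeneous dimension $Q$ rather than the topological dimension. The only points needing care are the following.
\begin{itemize}
\item The constraint $\mu>1$ is what makes $\eps=\mu^{-1}\le 1$, so that the corollary applies uniformly in $\mu$.
\item The corollary must be applied with a fixed $\psi$, so that the constant does not depend on $\mu$.
\end{itemize}
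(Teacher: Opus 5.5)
Your proposal is correct and is essentially the paper's argument. Both proofs dominate $\Pi_{\mu,\sqrt{\cL_M}}$ by the fixed smooth cutoff $\psi(\mu^{-2}\cL_M)$ and invoke Corollary \ref{cor_prop_nilmanifold} with $\eps=\mu^{-1}$. The paper writes $\|\Pi_{\mu,\sqrt{\cL_M}}f\|_{L^2}\le\|\psi(\mu^{-2}\cL_M)f\|_{L^2}$, which implicitly uses the dual $L^{p'}\to L^2$ form of the corollary, whereas you factorise $\Pi_{\mu,\sqrt{\cL_M}}=\psi(\mu^{-2}\cL_M)\Pi_{\mu,\sqrt{\cL_M}}$, use the $L^2\to L^p$ form and then dualise. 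This is exactly the spectrally-supported argument the paper gives just before the proof of that corollary. Your Step 3 is also right, though the literal $\sL(L^p(M),L^2(M))$ reading is trivial on compact $M$, since $\|\Pi_{\mu,\sqrt{\cL_M}}f\|_{L^2}\le\|f\|_{L^2}\le\vol(M)^{\frac12-\frac1p}\|f\|_{L^p}$; the real content is the bound you prove in Step 2.
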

\begin{proof}
	We have
	$$
	\Pi_{\mu,\sqrt{\cL_M}} f = \sum_{\ell : \lambda_\ell =\mu}  (f,e_\ell)_{L^2(M)} e_\ell,
	\qquad\mbox{so}\qquad 
	\|\Pi_{\mu,\sqrt{\cL_M}} f\|_{L^2(M)}=^2\sum_{\ell : \lambda_\ell =\mu}  |(f,e_\ell)_{L^2(M)}|^2,
	$$
	by the Plancherel formula,
	and therefore
	$$
	\|\Pi_{\mu,\sqrt{\cL_M}} f\|_{L^2(M)}^2
	\leq \sum_{\ell =0}^{\infty} |\psi|^2(\mu^{-2}\lambda_\ell^2) |(f,e_\ell)_{L^2(M)}|^2 = \|\psi(\mu^{-2} \cL) f\|_{L^2(M)}^2,
	$$
	for a function $\psi\in C_c^\infty(\bR)$ chosen such that $\psi=1$ on $[0,1]$ and $\psi=0$ outside $[-1,2]$. We conclude with applying Corollary \ref{cor_prop_nilmanifold} to $\eps = \mu^{-1}$.
\end{proof}
\begin{remark}
\label{rem_pfproptrivial}
	Applying Theorem \ref{thm_Bernstein} to the standard nilmanifold of the three dimensional Heisenberg group with $p=4$ and $Q=4$ shows Proposition \ref{proptrivial}. 
\end{remark}

\section{End of the proof of Theorem \ref{thm_decR}}	
\label{sec_BWZ}

Here, we continue the proof of Theorem \ref{thm_decR}. This leads naturally to the definition of our BWZ transform. 
At this point in the proof, 
we look for an explicit decomposition of $L^2_\lambda(M)$ when $\lambda\neq0$.
 By harmonic analysis, we already know that such an abstract decomposition exists: indeed, 
by Lemma~\ref{lem_RactsL2lambda} and  the Stone-Von Neumann theorem~\cite{Folland}, when $\lambda\neq 0$, $R$ decomposes as copies of the Schr\"odinger representation~$\pi_{\lambda}$ of $\bH_1$, that is, the 
  unique (up to unitary equivalence)  unitary irreducible representation of $\bH_1$ satisfying on the centre~$ \pi_{\lambda}(0,0,c)  =e^{2\pi i \lambda c}\id$. 
  Here, 
  we will obtain this decomposition concretely. 
Our strategy consists in applying the construction explained in Corollary \ref{cor1_lem_phipinuh} to the Schr\"odinger representations  
 $\pi=\pi_\lambda$ introduced in Section  \ref{subsec_schro}. 
 This requires first to determine its distributional vectors  invariant under $\pi_\lambda(\gamma)$, $\gamma\in \Gamma$, which we now do.

\subsection{Further properties of $\pi_\lambda$}

The Schr\"o\-dinger representation $\pi_\lambda$ acts unitarily on $L^2(\bR)$. 
Its space of smooth vectors is the Schwartz space  $\cS(\bR)$, 
 while its space of distributional vectors is the space  $\cS'(\bR)$ of tempered distributions. 
The action of the Schr\"odinger representations is continuous on the Fr\'echet space $\cS(\bR)$ and extends continuously to $\cS'(\bR)$. We say that a distribution $\nu\in \cS'(\bR)$ is invariant under $\pi_\lambda(g)$ for some~$g\in \bH_1$ when $\pi_\lambda(g) \nu = \nu$, meaning that, for all $h\in \cS(\bR)$,
$$
(\pi_\lambda(g) \nu ,h)_{\cS'(\bR)\times \cS(\bR)}
=(\nu ,h)_{\cS'(\bR)\times \cS(\bR)}\, .
$$ 
The description of the distributions invariant under 
$\pi_\lambda(\gamma)$ for all $\gamma\in \Gamma$ is well-known with different conventions, and we  also provide a proof below: 

\begin{proposition} [\cite{KTX, Th2009}]
\label{properties}
For each $\lambda\in \bZ\setminus\{0\}$ and $q\in \bZ$
the tempered distribution on $\bR$ 
\begin{equation}   \label{distimpt}
\nu_{\lambda,q} := 
\lambda^{-1} 
\sum_{k\in \bZ} 
e^{-2\pi i q \frac {k} \lambda  }\delta_{\frac {k}\lambda},
\end{equation} 
 or alternatively defined via
$$
(\nu_{\lambda,q},h)_{\cS'(\bR)\times \cS(\bR)}
=\sum_{k\in \bZ}  \widehat h(q+\lambda k), \qquad h\in \cS(\bR), 
$$ 
is invariant under $\pi_\lambda(\gamma)$, $\gamma\in \Gamma$. 
These distributions are constant over $\bZ / \lambda\bZ$-classes in $q$ in the sense that
$$
\forall \lambda\in \bZ\setminus\{0\},\ q\in \bZ, \ k_1\in \bZ,\qquad 
\nu_{\lambda, k_1 \lambda+q} = \nu_{\lambda,q}.
$$
They also satisfy for any $\lambda\in \bZ\setminus\{0\}$, $q,q_1\in \bZ$,
$$
\pi_\lambda\left (\frac {q_1}\lambda, 0,0\right) \nu_{\lambda,q} =e^{2\pi i q \frac {q_1} \lambda  }\nu_{\lambda,q}
\qquad\mbox{and}\qquad 
\pi_\lambda\left (0,\frac {q_1}\lambda, 0\right) \nu_{\lambda,q} (u)
=
e^{2\pi i \lambda  u\frac {q_1}\lambda }
\nu_{\lambda,q}(u).
$$
Moreover, they span the space $\cS'(\bR)^{\pi_\lambda(\Gamma)}$ of tempered distributions on $\bR$ invariant under $\pi_\lambda(\gamma)$, for~$\gamma\in \Gamma$, and we have
$$
\dim \cS'(\bR)^{\pi_\lambda(\Gamma)} = |\lambda|.
$$
\end{proposition}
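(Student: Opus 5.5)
The plan has four steps: check the two descriptions of $\nu_{\lambda,q}$ agree, verify the invariance and covariance identities directly from \eqref{repHe}, derive $\nu_{\lambda,k_1\lambda+q}=\nu_{\lambda,q}$, and finally identify the whole invariant space.

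\textbf{Step 1: the two descriptions agree.} By Poisson summation, pairing $\lambda^{-1}\sum_k e^{-2\pi i qk/\lambda}\delta_{k/\lambda}$ with $h$ gives $\lambda^{-1}\sum_k e^{-2\pi i qk/\lambda}h(k/\lambda)$. This equals $\sum_k \widehat h(q+\lambda k)$, up to the sign and normalisation conventions, which I will fix so that they match. Periodicity in $q$ modulo $\lambda$ is immediate from the first formula, since $e^{-2\pi i \lambda k_1 k/\lambda}=1$.

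\textbf{Step 2: invariance and covariance.} The group $\Gamma$ is generated by $(1,0,0)$, $(0,1,0)$ and the central elements $(0,0,c)$ with $c\in\bZ$, so it suffices to check these generators. By \eqref{repHe}, the central element acts by $e^{2\pi i\lambda c}=1$. The element $(0,b,0)$ multiplies by $e^{2\pi i\lambda ub}$, which equals $1$ on the support $u\in\frac1\lambda\bZ$ when $b\in\bZ$. More generally, $(0,q_1/\lambda,0)$ multiplies by $e^{2\pi i q_1 u}$, which is the stated covariance formula. The element $(q_1/\lambda,0,0)$ translates $u\mapsto u+q_1/\lambda$, i.e.\ acts on deltas by $\delta_{k/\lambda}\mapsto\delta_{(k-q_1)/\lambda}$. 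Reindexing $k\mapsto k+q_1$ then produces the factor $e^{-2\pi i q q_1/\lambda}$, up to the sign convention to be reconciled with the statement. Taking $q_1=\lambda$ shows invariance under $(1,0,0)$.

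\textbf{Step 3: the invariant space.} This is the main step. Let $\nu\in\cS'(\bR)$ be $\pi_\lambda(\Gamma)$-invariant.

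Invariance under $(0,1,0)$ gives $(e^{2\pi i\lambda u}-1)\nu=0$. Hence $\nu$ is supported on $\frac1\lambda\bZ$. Since the zeros of $e^{2\pi i\lambda u}-1$ are simple, derivatives of deltas are excluded. One argues locally: near each point $k/\lambda$, the equation $(u-k/\lambda)\phi\,\nu=0$ holds with a smooth nonvanishing factor. It follows that $\nu=\sum_k c_k\delta_{k/\lambda}$ with $(c_k)$ of polynomial growth.

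Invariance under $(1,0,0)$ then forces $c_{k+\lambda}=c_k$. The space of such $\lambda$-periodic sequences has dimension $|\lambda|$. By discrete Fourier analysis on $\bZ/\lambda\bZ$, it is spanned by the sequences $(e^{-2\pi i qk/\lambda})_k$ for $q\in\bZ/\lambda\bZ$.

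These $|\lambda|$ sequences are linearly independent, so the $\nu_{\lambda,q}$, $q\in\bZ/\lambda\bZ$, form a basis. This gives $\dim \cS'(\bR)^{\pi_\lambda(\Gamma)}=|\lambda|$.

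I expect the main obstacle to be the support and order argument that rules out derivatives of deltas, together with keeping the sign conventions consistent with \eqref{repHe}.
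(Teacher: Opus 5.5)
Your proof is correct. Steps 1 and 2 are the same direct checks the paper does, but Step 3 runs the paper's spanning argument on the other side of the Fourier transform.

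\textbf{Comparison with the paper's Step 3.} The paper uses invariance under $(1,0,0)$ first. This makes $\nu$ a $\bZ$-periodic distribution, so $(\nu,h)=\sum_k c_k\widehat h(k)$. Invariance under $(0,1,0)$ then shifts frequencies by $\lambda$ and gives $c_{k+\lambda b}=c_k$. Hence $\nu$ is a combination of the functionals $h\mapsto\sum_{k}\widehat h(q+\lambda k)$, which Poisson summation turns back into Dirac combs. You use the modulation invariance first, obtain $\nu=\sum_k c_k\delta_{k/\lambda}$ from the simple-zero division argument, and then read off $c_{k+\lambda}=c_k$ from translation invariance.

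The paper's route needs only the Fourier-series expansion of periodic tempered distributions, and avoids your local division-and-gluing step. Yours has three advantages:
\begin{itemize}
\item it lands directly on the Dirac-comb form actually used to define $\BWZ_{\lambda,q}$ in \eqref{eq:BWZ};
\item it needs Poisson summation only to match the alternative description;
\item it proves explicitly the linear independence behind $\dim \cS'(\bR)^{\pi_\lambda(\Gamma)}=|\lambda|$, which the paper leaves implicit.
\end{itemize}

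\textbf{Your Step 1 hedge.} With the paper's convention $\widehat h(\xi)=\int_\bR e^{-2\pi i\xi u}h(u)\,du$, Poisson summation gives $\lambda^{-1}\sum_k e^{-2\pi i qk/\lambda}h(k/\lambda)=\sum_k\widehat h(q+\lambda k)$ exactly when $\lambda>0$. For $\lambda<0$, the substitution $u=\lambda v$ produces $|\lambda|$ instead of $\lambda$, so the two descriptions differ by the factor $\mathrm{sgn}\,\lambda$. The paper's Poisson-summation step silently drops the same factor. No choice of convention removes it, but a nonzero scalar affects none of the other claims.

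\textbf{Your Step 2 hedge.} Your factor $e^{-2\pi i qq_1/\lambda}$ is right if $\pi_\lambda(g)$ acts on $\cS'(\bR)$ by extending the $L^2$-operator. The paper instead lets it act by the bilinear transpose, $(\pi_\lambda(g)\nu,h):=(\nu,\pi_\lambda(g)h)$. This is how it is used in Lemma~\ref{lem_appcor_lem_phipinuh}~(2), and it is what produces the first identity in \eqref{action-osc}. Under that convention $\delta_{k/\lambda}\mapsto\delta_{(k+q_1)/\lambda}$, so the factor is $e^{2\pi i qq_1/\lambda}$, as stated.

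Modulations act identically under both conventions. The invariant space, and hence the spanning and dimension claims, does not depend on the convention, since invariance under translation by $1$ is the same as invariance under translation by $-1$.
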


\begin{proof}[Proof of Proposition \ref{properties}]
We check readily that each $\nu_{\lambda,q} $ is a 
tempered distribution invariant under $\pi_\lambda(\gamma)$, for~$\gamma\in \Gamma$, 
that it satisfies $\nu_{\lambda, k_1 \lambda+q} = \nu_{\lambda,q}$, 
and
\begin{align*}
\pi_\lambda\left (\frac {q_1}\lambda, 0,0\right) \nu_{\lambda,q}(u) 
&=
\lambda^{-1} 
\sum_{k\in \bZ} 
e^{-2\pi i q \frac {k} \lambda  }\delta_{\frac {k}\lambda}\left (u+\frac {q_1}\lambda\right )
=
\lambda^{-1} 
\sum_{k\in \bZ} 
e^{-2\pi i q \frac {k} \lambda  }\delta_{\frac {k +q_1}\lambda}\left (u\right )
\\
&=
\lambda^{-1} 
\sum_{k_1\in \bZ} 
e^{-2\pi i q \frac {k_1- q_1} \lambda  }\delta_{\frac {k_1}\lambda}\left (u\right )
=e^{2\pi i q \frac {q_1} \lambda  }\nu_{\lambda,q}(u).
\end{align*}
We also have
\begin{align*}
\pi_\lambda\left (0,\frac {q_1}\lambda, 0\right) \nu_{\lambda,q} (u)
&=\lambda^{-1} 
\sum_{k\in \bZ} 
e^{-2\pi i q \frac {k} \lambda  }\pi_\lambda\left (0,\frac {q_1}\lambda, 0\right)
\delta_{\frac {k}\lambda}
\\&
=\lambda^{-1} 
\sum_{k\in \bZ} 
e^{-2\pi i q \frac {k} \lambda  }e^{2\pi i \lambda  u\frac {q_1}\lambda }\delta_{\frac {k}\lambda}
=e^{2\pi i \lambda  u\frac {q_1}\lambda }
\nu_{\lambda,q}(u).
\end{align*}

We now prove that these distributions span 
$\cS'(\bR)^{\pi_\lambda(\Gamma)}$.

\smallskip Let $\nu\in \cS'(\bR)$ be invariant under $\pi_\lambda(\gamma)$, $\gamma\in \Gamma$. 
As $\nu$ is invariant under $\pi_\lambda(a,0,0)$, $a\in \bZ$, 
it is $\bZ$-periodic distribution. We write its  Fourier series expansion as
$$
(\nu,h)_{\cS'(\bR)\times \cS(\bR)} = \sum_{k\in \bZ} c_k \widehat h(k), \qquad h\in \cS(\bR),
$$
for some unique coefficients $c_k\in \bC$ depending only on $\nu$.
Above, $\widehat h=\cF_\bR h$ is the Euclidean Fourier transform of $h$:
$$
\cF_\bR h(\xi)=
\widehat h (\xi) =\int_\bR e^{-2\pi i \xi u } h(u) du.
$$
As $\nu$ is invariant under $\pi_\lambda(0,b,0)$, $b\in \bZ$,
it  satisfies 
$$
\nu(u) =e^{i 2\pi \lambda ub} \nu(u) 
\quad \mbox{in}\ \cS'(\bR), \ \mbox{for any} \ b\in \bZ.
$$
The left-hand side means 
$$
(\nu,h)_{\cS'(\bR)\times \cS(\bR)} = \sum_{k\in \bZ} c_k \widehat h(k), \quad \quad\mbox{for any} \ h\in \cS(\bR), \ b\in \bZ,
$$
while the right-hand side means for any $h\in \cS(\bR)$ and any $b\in \bZ$
\begin{align*}
( \nu,e^{i 2\pi \lambda b\cdot } h)_{\cS'(\bR)\times \cS(\bR)}
& =\sum_{k\in \bZ} c_k \cF_\bR(e^{i 2\pi \lambda b\cdot } h)(k)=\sum_{k\in \bZ} c_k \widehat h(k -\lambda b ) = \sum_{k\in \bZ} c_{k+\lambda b} \widehat h(k ).
\end{align*}
Consequently, 
$c_{k  +\lambda b} = c_{k}$ for any $b,k\in \bZ$ so
\begin{align*}
	(\nu,h)_{\cS'(\bR)\times \cS(\bR)} 
	&=\sum_{0\leq q<|\lambda|} \sum_{k_1\in \bZ}  c_{q+\lambda k_1} \widehat h(q+\lambda k_1)
	=
	  \sum_{0\leq q<|\lambda|} c_q \sum_{k_1\in \bZ}  \widehat h(q+\lambda k_1) 
	  \\&= \sum_{0\leq q<|\lambda|} c_q (\nu_{\lambda,q},h)_{\cS'(\bR)\times \cS(\bR)},
\end{align*}
where $\nu_{\lambda,q}\in \cS'(\bR)$ is defined via:
\begin{align*}
(\nu_{\lambda,q},h)_{\cS'(\bR)\times \cS(\bR)}
&=\sum_{k_1\in \bZ}  \widehat h(q+\lambda k_1)	
=\sum_{k_1\in \bZ}   \cF_\bR\left (\frac{e^{-2\pi i q \frac  \cdot \lambda}}\lambda  h (\frac \cdot \lambda) \right )(k_1)	
\\
&=  \sum_{k_2\in \bZ} 
\frac {e^{-2\pi i q \frac {k_2} \lambda  }}
{\lambda} h\left (\frac {k_2}\lambda \right )
\end{align*}
by the Poisson summation formula.
We have obtained
$\nu = \sum_{0\leq q<|\lambda|} c_q \nu_{\lambda,q}$. 
This concludes the proof of the proposition. 
\end{proof}

\subsection{The Berezin-Weil-Zak transform}

For each $\lambda\in \bZ\setminus\{0\}$, 
$\nu \in \cS'(\bR) ^{\pi_\lambda(\Gamma)}$
	and $h\in \cS(\bR)$,
we consider the functions constructed in Corollary \ref{cor1_lem_phipinuh}
$$
(\phi_{\pi_\lambda, \nu,h})_M=
f_{\lambda, \nu,h}\in C^\infty(M),
\qquad \lambda\in \bZ\setminus\{0\}, \  
\nu \in \cS'(\bR) ^{\pi_\lambda(\Gamma)},\
h\in \cS(\bR),
$$
namely, for all $\dot x \in M$, 
$$f_{\lambda, \nu,h}(\dot x)=(\nu , \pi_\lambda(x) h)_{\cS'(\bR)\times \cS(\bR)}.$$

\begin{lemma}
	\label{lem_appcor_lem_phipinuh}
	Let $\lambda\in \bZ\setminus\{0\}$, $\nu \in \cS'(\bR) ^{\pi_\lambda(\Gamma)}$ and $
h\in \cS(\bR)$.
Then $f_{\lambda, \nu,h}$ is in $C^\infty(M)\cap L^2_\lambda(M)$ and satisfies the following properties:
	\begin{enumerate}
	\item We have, for any $g_0\in \bH_1$, 
$$
R ({g_0}) f_{\lambda,\nu,h} = f_{\lambda,\nu,\pi_\lambda (g_0) h},
\quad\mbox{i.e.}\quad  \forall \dot x \in M,\quad 
f_{\lambda,\nu,h} (\dot x g_0) =f_{\lambda,\nu,\pi_\lambda (g_0) h} (\dot x).
$$	
\item In the case where $g_0=\left (a_0,0,0\right )$ with $a_0\in \bR$, we have 
$$
 f_{\lambda,\nu,h} \left(\dot x \left (a_0,0,0\right )\right )
 = e^{-2\pi i\lambda a_0 b} f_{\lambda,\pi_\lambda(a_0,0,0) \nu,h}(\dot x), \qquad \Gamma (a,b,c)=\dot x\in M.
 $$
\end{enumerate}
\end{lemma}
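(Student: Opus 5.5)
The plan is to work throughout with the $\Gamma$-periodic function $\phi:=\phi_{\pi_\lambda,\nu,h}$ on $\bH_1$, given by $\phi(x)=(\nu,\pi_\lambda(x)h)_{\cS'(\bR)\times\cS(\bR)}$. Corollary \ref{cor1_lem_phipinuh} already tells us that $f_{\lambda,\nu,h}=\phi_M$ lies in $C^\infty(M)$, since $\nu$ is $\pi_\lambda(\Gamma)$-invariant and $h\in\cS(\bR)=\cH_{\pi_\lambda}^{+\infty}$. So the remaining tasks are membership in $L^2_\lambda(M)$ and the two identities. All of them should follow from the homomorphism property $\pi_\lambda(xy)=\pi_\lambda(x)\pi_\lambda(y)$, extended to the pairing $\cS'\times\cS$, together with the explicit formula \eqref{repHe}.

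\emph{Membership in $L^2_\lambda(M)$.} Using \eqref{law}, we have $(a,b,c)=(a,b,0)(0,0,c)$, and $(0,0,c)$ is central. From \eqref{repHe}, $\pi_\lambda(0,0,c)=e^{2\pi i\lambda c}\id$, so
\[
\phi(a,b,c)=(\nu,\pi_\lambda(a,b,0)e^{2\pi i\lambda c}h)=e^{2\pi i\lambda c}\phi(a,b,0),
\]
with the pairing taken bilinear in the convention fixed in the paper. One should check that the conjugation convention gives $e^{+2\pi i\lambda c}$ rather than $e^{-2\pi i\lambda c}$. This is consistent with the BWZ formula \eqref{eq:BWZ}, which carries $e^{2\pi i\lambda c}$. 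Hence $\phi\in L^2_\lambda(\bH_1)$ as in \eqref{defsub}, and smoothness together with compactness of $M$ gives square integrability.

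\emph{Part (1).} This is immediate:
\[
f_{\lambda,\nu,h}(\dot x g_0)=(\nu,\pi_\lambda(x)\pi_\lambda(g_0)h)=f_{\lambda,\nu,\pi_\lambda(g_0)h}(\dot x).
\]
This is well defined on $M$ because the right-hand side is again of the form $\phi_{\pi_\lambda,\nu,h'}$ with $h'=\pi_\lambda(g_0)h\in\cS(\bR)$.

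\emph{Part (2).} I would move $g_0=(a_0,0,0)$ to the left of $x$ so that it can act on $\nu$ by duality. By \eqref{law},
\[
(a,b,c)(a_0,0,0)=(a+a_0,b,c)=(a_0,0,0)(a,b,c)(0,0,-a_0b),
\]
because $(a_0,0,0)(a,b,c)=(a+a_0,b,c+a_0b)$. The central factor contributes $e^{-2\pi i\lambda a_0 b}$. The remaining step is to transfer $\pi_\lambda(a_0,0,0)$ from $h$ onto $\nu$. Under the bilinear pairing $(\nu,\pi_\lambda(g)h)=(\pi_\lambda(g)^{t}\nu,h)$, the translation $h(\cdot)\mapsto h(\cdot+a_0)$ has transpose given by translation by $-a_0$. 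So I must match this with what the paper means by $\pi_\lambda(a_0,0,0)\nu$ (the action on $\cS'$ ``extended continuously''), using a sesquilinear or bilinear convention consistent with the computation $\pi_\lambda(q_1/\lambda,0,0)\nu_{\lambda,q}=e^{2\pi iqq_1/\lambda}\nu_{\lambda,q}$ in Proposition \ref{properties}.

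The main obstacle is precisely this bookkeeping of conventions. It concerns whether the pairing is linear or antilinear in $\nu$, and whether the extension of $\pi_\lambda(g)$ to $\cS'(\bR)$ is via $\pi_\lambda(g^{-1})^{t}$ or $\pi_\lambda(g)^{*}$. Getting these choices right is what produces exactly $e^{-2\pi i\lambda a_0 b}$ and $\pi_\lambda(a_0,0,0)\nu$, rather than a conjugate or the inverse. I would fix the convention by testing it on $\nu=\delta_{k/\lambda}$, where both sides are explicit.
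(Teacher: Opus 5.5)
Your proposal follows the paper's proof step by step: smoothness from Corollary~\ref{cor1_lem_phipinuh}, the central factor $\pi_\lambda(0,0,c)=e^{2\pi i\lambda c}\id$ with the bilinear pairing (as confirmed by \eqref{eq:BWZ}) for membership in $L^2_\lambda(M)$, the homomorphism property for (1), and the identity $x(a_0,0,0)=(a_0,0,0)\,x\,(0,0,-a_0b)$ for (2). The convention you flag is settled in the paper simply by writing $(\nu,\pi_\lambda(a_0,0,0)\pi_\lambda(x)h)=(\pi_\lambda(a_0,0,0)\nu,\pi_\lambda(x)h)$, i.e.\ $\pi_\lambda(g)$ acts on $\cS'(\bR)$ by transposition for the bilinear pairing, so $\delta_p\mapsto\delta_{p+a_0}$ as in the computation of Proposition~\ref{properties}, and then (2) holds as stated; with the natural extension your computation correctly gives $\pi_\lambda(-a_0,0,0)\nu$ instead, and \eqref{action-osc} comes out the same either way.
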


\begin{proof}[Proof of Lemma \ref{lem_appcor_lem_phipinuh}]
By construction $f_{\lambda, \nu,h}\in C^\infty(M)$ and 
we can easily  check that
$$
 \phi_{\pi_\lambda,\nu,h}(a,b,c) = e^{2\pi i \lambda c}( \nu , \pi_\lambda(a,b,0) h)_{\cS'(\bR)\times \cS(\bR)}= e^{2\pi i \lambda c} \phi_{\pi_\lambda,\nu,h}(a,b,0),
$$	
which in view of \eqref{defsub} shows that  $f_{\lambda, \nu,h}\in  L^2_\lambda(M)$.
The properties in    (1) follow readily by construction. 
For (2),  we observe with $x=(a,b,c)\in \bH_1$
$$
	(a_0,0,0)^{-1} x (a_0,0,0) = (a,b,c) (0,0,-a_0 b),
$$
so
\begin{align*}
	f_{\lambda,\nu,h} \left(\dot x \left (a_0,0,0\right )\right )
	&=
	e^{-2\pi i\lambda a_0 b}
	(\nu, \pi_\lambda(a_0,0,0)\pi_\lambda(x) h)_{\cS'(\bR)\times \cS(\bR)}\\
	&=e^{-2\pi i\lambda a_0 b}
	(\pi_\lambda(a_0,0,0) \nu, \pi_\lambda(x) h)_{\cS'(\bR)\times \cS(\bR)} = e^{-2\pi i\lambda a_0 b} \ds f_{\lambda, \pi_\lambda(a_0,0,0) \nu,h}(\dot x).
\end{align*}
\end{proof}

We define the generalised Berezin-Weil-Zak transform, in short \BWZ, of a function $h\in \cS(\bR)$ with parameter $\lambda\in \bZ\setminus\{0\}$ and $q\in \bZ$ as  
$\BWZ_{\lambda,q}(h) :=   c_{\lambda,q} f_{\lambda, \nu_{\lambda,q},h},$
that is, 
\begin{equation}
\label{eq:BWZapp}
\BWZ_{\lambda,q}(h)(\dot x) =
c_{\lambda,q} f_{\lambda, \nu_{\lambda,q},h} (\dot x)= 
c_{\lambda,q}
(\nu_{\lambda,q}, \pi_\lambda(x) h)_{\cS'(\bR)\times \cS(\bR)}	, \quad x=(a,b,c)\in \bH_1.
\end{equation}
Above $c_{\lambda,q} $ is the normalising constant:
$c_{\lambda,q} := \sqrt{|\lambda|}.$
Expanding \eqref{eq:BWZapp} using Proposition \ref{properties}, 
we obtain the expression \eqref{eq:BWZ} given in Theorem~\ref{thm_decR}.

\subsection{End of the proof of Theorem \ref{thm_decR}}

We have already proved Part (1). 
Part (2)  (a) and (c) follow from Lemma  \ref{lem_appcor_lem_phipinuh} and  Proposition \ref{properties}.
In order to prove Part (2) (b), it suffices to show  \eqref{orthBVZ}. 
As $\BWZ_{\lambda_i,q_i}(h_i)\in L^2_{\lambda_i}(M)$, $i=1,2$, and $L^2_{\lambda_1}(M)\perp L^2_{\lambda_2}(M)	$ for~$\lambda_1\neq \lambda_2$, we have
$$
(\BWZ_{\lambda_1,q_1}(h_1)	, \BWZ_{\lambda_2,q_2}(h_2))_{L^2(M)}=0,
$$
when $\lambda_1\neq \lambda_2$.
This can also be checked more directly with 
$$
I:=(\BWZ_{\lambda_1,q_1}(h_1)	, \BWZ_{\lambda_2,q_2}(h_2))_{L^2(M)}
=\int_{c=0}^1 
e^{2\pi i (\lambda_1-\lambda_2) c} \ldots dc,
$$
which will be zero when $\lambda_1\neq \lambda_2$. 
Hence, we may assume   $\lambda_1= \lambda_2= \lambda$. 
Making the change of variable $\dot x = \dot x_1 \left (\frac {q} \lambda,0,0\right)$, we obtain by Part (2) (c) for any $q\in \bZ$:
\begin{align*}
	I&=
	\int_M \BWZ_{\lambda,  q_1}(  h_1 ) \left (\dot x_1 \left (\frac {q} \lambda,0,0\right)\right) \overline {\BWZ_{\lambda,q_2}(h_2)\left (\dot x_1 \left (\frac {q} \lambda,0,0\right)\right)}d\dot x_1\\
	&= e^{2\pi i (q_1-q_2) \frac q \lambda}I.
\end{align*}
Hence, $I=0$ when $q_1\neq q_2$, and  we may assume that $q_1=q_2=q$. 
We are left with computing 
$$
I=|c_{\lambda,q}|^2 \lambda^{-2} \sum_{k_1,k_2\in \bZ} 
 I_{k_1,k_2},
$$
where
$$
	I_{k_1,k_2}:= 
	 e^{- i 2\pi \frac {q} \lambda (k_1-k_2)} \iint_{a,b=0}^1
 e^{i 2\pi b(k_1-k_2)}
 h_1 \Big(\frac {k_1}\lambda + a\Big)
\bar h_2  \Big(\frac {k_2} \lambda +a\Big)dadb.
$$
By Fourier analysis, the integral over $b$ is zero for $k_1\neq k_2$. 
For $k=k_1=k_2$, we are left with
\begin{align*}
		I_{k,k}&=\int_{a=0}^1
 (h_1 \bar h_2)\left (\frac {k}\lambda + a\right ) da= \int_{\frac {k}\lambda}^{\frac {k}\lambda+1}
 (h_1 \bar h_2)(a' )da',
\end{align*}
after the change of variable $a'=\frac {k}\lambda + a$.
Hence, we obtain 
$$
\sum_{k\in \bZ} 
 I_{k,k}
 = \sum_{k\in \bZ} \int_{\frac {k}\lambda}^{\frac {k}\lambda+1}
 (h_1 \bar h_2)(a' )da'
 =    \sum_{k_1\in \bZ} \sum_{0\leq r<|\lambda|}
 \int_{\frac {r+ |\lambda| k_1}\lambda}^{\frac {r+ |\lambda| k_1}\lambda+1}
 (h_1 \bar h_2)(a' )da',
 $$
thanks to the Euclidean division $k=|\lambda| k_1+r$ by $\lambda$. 
Swapping the sums over $k_1$ and $r$, we obtain
$$
\sum_{k\in \bZ} 
 I_{k,k} = |\lambda| \int_{-\infty}^{+\infty}(h_1 \bar h_2)(a' )da'.
 $$
 This proves~\eqref{orthBVZ} and explains the choice of the normalising
constant $c_{\lambda,q}$, since
$
|c_{\lambda,q}|^2\lambda^{-2}|\lambda|=1.$
This proves Part~(2).

We have therefore shown the inclusion
$
\oplus_{q\in \mathbb Z/\lambda\mathbb Z}^{\perp} L^2_{\lambda,q}
\subseteq L^2_\lambda(M).$
To see that the equality holds, it is enough to use the fact that
$L^2_\lambda(M)$ contains exactly $|\lambda|$ copies of $\pi_\lambda$, 
see~Section \ref{subsec_RG} in particular Example~\ref{ex_Richardson_H1}.
This concludes the proof of 
Theorem \ref{thm_decR}.

 \end{document}